\documentclass[pdflatex,sn-mathphys-num]{sn-jnl}
\setcitestyle{numbers,square}

\usepackage{graphicx}%
\usepackage{multirow}%
\usepackage{amsmath,amssymb,amsfonts}%
\usepackage{amsthm}%
\usepackage{mathrsfs}%
\usepackage[title]{appendix}%
\usepackage{xcolor}%
\usepackage{textcomp}%
\usepackage{manyfoot}%
\usepackage{booktabs}%
\usepackage{algorithm}%
\usepackage{algorithmicx}%
\usepackage{algpseudocode}%
\usepackage{cancel}
\usepackage{listings}%

\usepackage{tikz}

\usetikzlibrary{
    arrows.meta,
    positioning,
    shapes.geometric,
    calc
}

\definecolor{darkorange}{rgb}{1.0, 0.45, 0.0}

\usepackage{comment}

\usepackage{multirow}

\usepackage{subcaption}

\usepackage{soul}
\soulregister\cite7
\soulregister\ref7
\soulregister\eqref7
\soulregister\pageref7

\usepackage{cases}

\usepackage{mathabx}

\usepackage{bm}

\theoremstyle{thmstyleone}%
\newtheorem{theorem}{Theorem}[section]
\newtheorem{proposition}[theorem]{Proposition}%

\newtheorem{lemma}[theorem]{Lemma}%

\theoremstyle{thmstyletwo}%
\newtheorem{remark}{Remark}%

\theoremstyle{thmstylethree}%

\begin{document}

\title[A novel L-shaped refinement chain cuts method for two-stage stochastic programs]{A novel L-shaped refinement chain cuts method for two-stage stochastic programs}



\author[1]{\fnm{Mike} \sur{Hewitt}}\email{mhewitt3@luc.edu}

\author*[2]{\fnm{Francesca} \sur{Maggioni}}\email{francesca.maggioni@unibg.it}

\author[2]{\fnm{Andrea} \sur{Spinelli}}\email{andrea.spinelli@unibg.it}


\affil[1]{\orgdiv{Department of Information System and Supply Chain Management}, \orgname{Quinlan School of Business, Loyola University}, \orgaddress{\street{16 E Pearson St}, \city{Chicago}, \postcode{60611}, \state{IL}, \country{United States}}}

\affil[2]{\orgdiv{Department of Management, Information and Production Engineering}, \orgname{University of Bergamo}, \orgaddress{\street{Viale G. Marconi 5}, \city{Dalmine}, \postcode{24044}, \country{Italy}}}

\abstract{
This paper introduces the \textit{L-shaped refinement chain cuts method}, a novel decomposition approach for solving two-stage stochastic programs. The proposed method generalizes the classical L-shaped framework by exploiting refinement chains of scenarios, where sequences of increasingly refined scenario partitions are considered. At each refinement level, the full scenario set is partitioned into subgroups, and one subproblem is solved for each subgroup rather than for each individual scenario as in the classical L-shaped method. The proposed framework provides a unified decomposition scheme in which the classical multi-cut and single-cut L-shaped formulations arise as special cases. Theoretical convergence properties to the optimal solution of the original two-stage stochastic program are established at each refinement level. Furthermore, the relationships between consecutive refinement levels are characterized in terms of Benders cuts, leading to the development of an iterative refinement-based solution algorithm. The effectiveness of the proposed method is evaluated on a two-stage stochastic fixed-charge multicommodity network design problem under a mean-risk formulation. Computational experiments validate the benefits of the proposed approach and highlight its applicability to large-scale risk-averse stochastic optimization problems.
}

\keywords{Two-stage stochastic programming, L-shaped decomposition, Refinement chain, Stochastic network design model, Conditional value-at-risk}


\pacs[MSC Classification]{90C15}

\maketitle
 
\section{Introduction}\label{sec1}

Stochastic programming \cite{BirLou2011,shapiro2021lectures} is a widely adopted framework for optimization under uncertainty. Over the years, it has been successfully applied across a broad range of domains, including finance \cite{Consigli2017}, healthcare \cite{Ozaltn2011}, transportation \cite{SpiBezJabMag2025} and logistics planning \cite{SIDDIG2025925}, to name a few. A fundamental assumption of stochastic programming is that the probability distribution of the uncertain parameters is known to the decision-maker. In practice, the distribution is represented by a finite set of scenarios, each corresponding to a possible realization of the uncertain parameters, yielding a deterministic equivalent of the original stochastic optimization problem. However, as the number of scenarios increases, the resulting optimization problem becomes increasingly difficult to solve, thereby motivating the development of decomposition approaches \cite{Ruszczyski1997}. A complementary perspective, particularly when solving the problem to optimality becomes computationally challenging, is to derive tractable bounds on its objective value. In this context, refinement chain approaches \cite{MagPfl2016} construct a hierarchy of scenario partitions through successive aggregation. Each level of the refinement chain corresponds to a partition of the scenario set into subgroups, each associated with a subproblem. Their optimal values are then combined to provide a lower bound on the original stochastic problem.

In this work, we introduce a novel decomposition method, called the \emph{L-shaped refinement chain cuts method}. This approach generalizes the classical L-shaped framework \cite{BirLou1988,SlyWet1969} by exploiting the structure of refinement chains of scenarios. Rather than solving one subproblem for each individual scenario, as in the classical L-shaped method, we solve a sequence of subproblems associated with groups of scenarios defined through a refinement chain. In what follows, we review the relevant literature on the L-shaped method and refinement chain approaches, while introducing the main concepts that motivate the proposed decomposition framework.

The L-shaped method extends the Benders decomposition approach of \cite{Benders1962} to two-stage stochastic programming. The method iteratively solves a master problem and a collection of single-scenario subproblems to generate feasibility and optimality cuts that progressively strengthen the master problem \cite{RahCraGenRei2017}. Depending on the level of scenario aggregation, the method admits either a multi-cut formulation, which generates one optimality cut per scenario \cite{BirLou1988}, or a single-cut formulation, which considers a single aggregated optimality cut per iteration \cite{SlyWet1969}.

Since its introduction, Benders decomposition has been successfully applied to a broad range of optimization problems, including nonlinear, integer, and bilevel formulations \cite{RahCraGenRei2017}. When applied to large-scale problems, particularly in the stochastic setting, a straightforward application of the classical Benders decomposition may exhibit limited computational efficiency. The main drawbacks include time-consuming iterations, weak feasibility and optimality cuts, and potential computational instability, to name a few. To mitigate these issues, various enhancement and acceleration techniques have been proposed. According to \cite{RahCraGenRei2017}, four main directions can be identified: (i) cut generation and (ii) solution generation approaches, aimed at reducing the number of iterations and producing high-quality Benders cuts and solutions (see, e.g., \cite{HosTurn2025,KalSah2025}); (iii) solution procedures, referring to the algorithms used to solve the master problem and the subproblems, with the goal of improving convergence speed and reducing computational time per iteration (see, e.g., \cite{ReiCorGenSor2009}); (iv) decomposition strategies which focus on how the problem is partitioned to obtain the initial master problem and the subproblems (see, e.g., \cite{CraHewMagRei2021}). However, most of these improvements and developments have been proposed for the classical Benders decomposition, whereas relatively few studies have specifically focused on the L-shaped method (see, e.g., \cite{BerCor-WriPau2025,CraHewMagRei2021,luo2025cutting,Ram-PicLjuMor2023}). In this setting, the scenario-based representation of uncertainty introduces an additional source of computational complexity, as the number of scenario subproblems and associated cuts can grow substantially with the number of scenarios. For very large-scale stochastic programs, obtaining an optimal solution may therefore remain computationally challenging even when decomposition methods are employed. This has led to the development of bounding and approximation techniques aimed at providing tractable estimates of the optimal objective value \cite{NarMagWal2023}. 

According to the classification proposed in \cite{Edirisinghe1999}, bounding approaches for stochastic programs can be divided into two main categories. The first category constructs bounds through \textit{functional approximations} of the recourse value functions, exploiting properties such as convexity, concavity, and piecewise linearity \cite{Birge1988bounds,EdirisingheZiemba1992,Wallace1987}. The second category derives bounds based on \textit{distributional approximations}, whereby the original probability distribution is substituted with a simpler discrete representation while preserving specified moments \cite{Edirisinghe1996,EdirisingheZiemba1994}. This category includes barycentric discretization methods \cite{FraKuhSch2011,Kuhn2005} as well as stochastic dominance-based guaranteed bounds for possibly non-Markovian processes \cite{MagPfl2019}.

Beyond functional and distributional approximation techniques, an alternative class of bounding approaches is based on \textit{scenario grouping decomposition}. This idea was first introduced in \cite{Birge1982} through the concept of pair subproblems and was subsequently extended in \cite{MaggioniAlleviBertocchi2014,MaggioniAlleviBertocchi2016}, where lower bounds on the optimal objective value are obtained by solving collections of subproblems defined over subsets of scenarios. Building on these developments, \cite{MagPfl2016} introduced the \textit{refinement chain framework}, which considers sequences of increasingly refined scenario partitions, thereby generating a hierarchical representation of the uncertainty. Such a framework generates monotonic non-decreasing sequences of lower bounds for stochastic programs with concave risk functionals. By operating directly on scenario partitions rather than on functional or distributional approximations, refinement chain-based approaches provide a scalable alternative for large-scale stochastic programs while preserving explicit monotonicity guarantees. Within the literature on scenario grouping decomposition, several related developments have been proposed. In particular, hierarchies of progressively refined scenario partitions have been used to derive increasingly tighter optimization bounds while improving computational scalability \cite{SandikciKongSchaefer2013,SandikciOzaltin2017}. Other extensions include the derivation of dual bounds through scenario set partitions \cite{BakirBolandDandurandErera2020}, the incorporation of dynamic risk measures \cite{MAHMUTOGULLARI2018595,MahmutogullariCavusAkturk2020}, and applications to distributionally robust optimization \cite{BayMagFacYan2024}. More recently, \cite{MagCavFac2026} proposed an optimization-driven framework for constructing optimal refinement chains induced by the dualization of non-anticipativity constraints.

Scenario grouping and aggregation techniques have also been extensively investigated for improving the computational tractability of two-stage stochastic programs. In \cite{SonLue2015}, an \textit{Adaptive Partition Method} (APM) based on scenario aggregation is proposed to solve two-stage stochastic programs with discrete probability distributions. The authors derive necessary and sufficient conditions under which the optimal solution can be obtained by solving the problem over a suitable scenario partition. A \textit{Generalized} version of APM, referred to as GAPM, is introduced in \cite{Ram-PicMor2022}, where scenarios are initially fully aggregated and then iteratively disaggregated into subsets. Scenario aggregation through clustering techniques is studied in \cite{HewOrtRei2022}, where representative scenarios are selected from clusters to construct smaller approximations of the original two-stage stochastic program.

A related line of research investigates the integration of scenario partitioning and aggregation techniques within the L-shaped decomposition framework, with the goal of strengthening the master problem and mitigating some of the computational limitations of the classical approach. In \cite{TruNtaSch2010}, an adaptive multi-cut method is proposed in which the aggregation level of optimality cuts evolves dynamically during the solution process. Starting from a low level of cut aggregation, the algorithm progressively increases the aggregation level as more information about the expected recourse function becomes available, thereby reducing the size of the master problem. In \cite{CraHewMagRei2021}, the authors introduce a \textit{Partial Benders Decomposition} methodology that reduces the number of feasibility cuts through partial relaxation of the original problem, while strengthening optimality cuts by generating artificial scenario subproblems that are incorporated into the master problem. The integration of GAPM into the L-shaped decomposition method is studied in \cite{Ram-PicLjuMor2023}, where sufficient conditions are provided to ensure that feasibility and optimality cuts constructed with respect to a scenario partition lead to an optimal solution. More recently, in \cite{WuZhaZhaLiaZha2026}, scenarios are clustered using neural network and machine learning techniques, with the corresponding Benders cuts aggregated within each cluster as an algorithmic enhancement for solving stochastic facility location and production planning problems.

The existing literature highlights the benefits of scenario aggregation and refinement-based representations of uncertainty in solving two-stage stochastic programs. However, the use of these ideas within the L-shaped decomposition method has received only limited attention from a general methodological perspective. To address this gap, this paper introduces a novel decomposition framework that exploits the hierarchical structure of refinement chains while preserving the convergence properties of the classical L-shaped method.

\subsection{Contributions}
In this paper, we introduce a novel L-shaped refinement chain cuts method that generalizes the classical L-shaped decomposition approach through the use of refinement chains of scenarios, yielding a new family of Benders decomposition schemes for two-stage stochastic programs. The main contributions of the paper are threefold and can be summarized as follows. First, we develop an L-shaped decomposition framework associated with each level of a refinement chain of scenarios. The proposed approach generalizes the scenario-based decomposition of the classical L-shaped method to a subgroup-based decomposition, in which one dual subproblem is solved for each subgroup of scenarios at a given refinement level. As a result, fewer dual subproblems are solved, each corresponding to a scenario subgroup. For each refinement level, a single master problem is constructed and Benders cuts specific for each subgroup are generated. We show that both the multi-cut and single-cut variants of the L-shaped method are special cases of the proposed framework. Furthermore, we prove that, at every refinement level, the proposed method converges to the optimal solution of the original two-stage stochastic program. A key feature of the proposed L-shaped refinement chain cuts framework is that it does not impose any additional structural requirements on the problem, making it applicable to a broad class of two-stage stochastic programs with continuous, integer, or binary first-stage variables and continuous recourse.

Second, we investigate the theoretical relationship between consecutive levels of the refinement chain within the proposed L-shaped framework. In particular, we characterize how the dual feasible regions at two consecutive levels of the refinement chain are related, and how this relationship is reflected in the corresponding sets of extreme points and extreme rays. We show that the feasibility cuts of the classical L-shaped method remain valid at all refinement levels, whereas optimality cuts at a given level can be constructed as convex combinations of optimality cuts from the previous level using appropriately refinement-based weights. This result enables the design of an iterative algorithm that solves the master problem at a given level of the refinement chain while incorporating optimality cuts obtained at the previous level.

Third, we assess the effectiveness of the proposed method on a two-stage stochastic fixed-charge multicommodity network design problem. While most stochastic network design problems focus on the minimization of the expected costs (see, e.g., \cite{CraHewMagRei2021,HewReiWall2021}), in this work we adopt a mean-risk formulation combining expectation and \textit{Conditional Value-at-Risk} \cite{Shapiro2011}. To the best of our knowledge, this work and \cite{Cordeau2021} represent the only contributions in the literature to consider a mean-risk objective in a stochastic network design setting.

With respect to the existing literature on scenario partitioning and aggregation within L-shaped decomposition, the works most closely related to ours are \cite{Ram-PicLjuMor2023} and \cite{TruNtaSch2010}. Both approaches are restricted to decomposition schemes based on scenario partitions, whereas the proposed framework accommodates a more general class of refinement chains, including both partition-based and fixed-scenario constructions. Compared with \cite{Ram-PicLjuMor2023}, our approach preserves convergence to the optimal solution without imposing additional conditions on the structure of the dual subproblems. Moreover, it allows uncertainty in the second-stage objective coefficients. Compared with \cite{TruNtaSch2010}, the proposed approach replaces the scenario-based solution of dual subproblems with a subgroup-based decomposition, in which one dual subproblem is solved for each subgroup of scenarios at a given refinement level. Furthermore, we establish theoretical relationships among extreme points, extreme rays, and Benders cuts across consecutive refinement levels, which form the basis of an iterative solution algorithm capable of transferring information throughout the refinement chain. Overall, these features make the proposed L-shaped refinement chain cuts method a more general and flexible decomposition framework for solving two-stage stochastic programs.

The remainder of the paper is organized as follows. Section \ref{sec_basicfactsandnotation} introduces the notation and reviews the main concepts from the literature underlying the proposed methodology. Section \ref{sec_integrating_ref_chain_into_Lshaped} presents the novel L-shaped refinement chain cuts method. The section first introduces the method at a fixed refinement level and then establishes its theoretical properties across consecutive refinement levels. Section \ref{sec_implementation} presents the implementation framework together with an iterative algorithm based on the proposed approach. Section \ref{sec_numericalresults} introduces the two-stage stochastic fixed-charge multicommodity network design problem and discusses the results of the computational experiments. Finally, Section \ref{sec_conclusions} summarizes the main findings and outlines directions for future research.

\section{Basic facts and notation} \label{sec_basicfactsandnotation}
In this section, we introduce the notation and summarize the main concepts for the development of the proposed L-shaped refinement chain cuts method. Specifically, Section \ref{sec_lshaped} reviews the formulation of a two-stage stochastic program and the structure of the classical L-shaped decomposition method \cite{BirLou1988,SlyWet1969}, while Section \ref{sec_refchain} recalls the refinement chain construction and the associated probability dissection proposed in \cite{MagPfl2016}.

\subsection{L-shaped decomposition for two-stage stochastic programs} \label{sec_lshaped}

Let $\mathbb{R}^n$ and $\mathbb{R}^{m\times n}$ denote the space of $n$-dimensional real vectors and $m \times n$ real matrices, respectively. Let $\bm{x} \in \mathcal{X}\subseteq \mathbb{R}^n$ represent first-stage decision variables, with $\mathcal{X}$ a non-empty closed set. We consider a stochastic parameter $\bm \xi$ defined on a probability space $(\Omega, \mathcal{A},\mathbb{P})$. If $\bm \xi$ evolves as a discrete-time stochastic process over the finite support $\Omega$, the information structure can be described in the form of a scenario tree. Let $\mathcal{S}:=\{\bm \omega^1,\ldots,\bm \omega^{|\mathcal{S}|}\}$ be the set of possible scenarios $\bm \omega^s\in\Omega$, each occurring with probability $p^s:=\mathbb{P}(\bm \omega^s)$. With a slight abuse of notation, in the following we use $s$ and $\bm \omega^s$ interchangeably to denote the $s$-th scenario. For each scenario $s\in\mathcal{S}$, let $\bm  y^s\in\mathbb{R}^m$ denote the second-stage decision variables, constrained to satisfy the conditions $\bm W \bm y^s+\bm T^s \bm x=\bm h^s$ and $\bm y^s\geq \bm 0$. We assume a fixed recourse matrix $\bm W\in\mathbb{R}^{p\times m}$, while the technology matrix $\bm T^s\in\mathbb{R}^{p\times n}$ and the right-hand side vector $\bm h^s\in\mathbb{R}^{p}$ depend on scenario $s\in\mathcal{S}$. We associate fixed objective function coefficients $\bm c \in \mathbb{R}^n$ with the first-stage variables and scenario-dependent costs $\bm q^s \in \mathbb{R}^m$, $s \in \mathcal{S}$ with the second-stage variables. Finally, let $\bm \xi^s:=(\bm T^s,\bm h^s, \bm q^s)$ denote the realization of the stochastic parameter $\bm \xi$ under scenario $s \in \mathcal{S}$. 

Using the notation introduced above, the corresponding two-stage stochastic program can be formulated as follows (see, e.g., \cite{BirLou2011}):
\begin{subequations} \label{eq:two_stage_problem}
\begin{align}
    \min_{\bm x} & \quad \bm c^\top \bm x+\sum_{s\in\mathcal{S}}p^s\mathcal{Q}(\bm x,\bm \xi^s) \label{eq:obj_two_stage_problem} \\
    \text{s.t.} & \quad \bm x \in \mathcal{X} \label{eq:constr1_two_stage_problem},
\end{align}
\end{subequations}
where, for each $s\in\mathcal{S}$, $\mathcal{Q}(\bm x,\bm \xi^s)$ is the \textit{recourse} (or \textit{second-stage}) \textit{problem} associated with the realization $\bm  \xi^s$, defined as follows:

\newpage
\begin{subequations} \label{eq:second_stage_problem}
\begin{align}
    \mathcal{Q}(\bm x,\bm \xi^s):= \min_{\bm y^s} & \quad \bm q^{s\top} \bm y^s \label{eq:obj_second_stage_problem} \\
    \text{s.t.} & \quad \bm W \bm y^s= \bm h^s-\bm T^s \bm x \label{eq:constr1_second_stage_problem}\\
    & \quad \bm y^s\geq \bm 0 \label{eq:constr2_second_stage_problem}.
\end{align}
\end{subequations}

Extensively, the \textit{deterministic equivalent} of Problem \eqref{eq:two_stage_problem} can be formulated as:
\begin{subequations} \label{eq:DEP}
\begin{align}
    \min_{\bm x,\bm y^s} & \quad \bm c^\top \bm x+\sum_{s\in\mathcal{S}}p^s\bm q^{s\top} \bm y^s \label{eq:obj_DEP} \\
    \text{s.t.} & \quad \bm W \bm y^s=\bm h^s-\bm T^s\bm x & \forall s\in\mathcal{S} \label{eq:constr1_DEP}\\
    & \quad \bm y^s\geq \bm 0 & \forall s\in\mathcal{S} \label{eq:constr2_DEP}\\
    & \quad \bm x \in \mathcal{X}  \label{eq:constr3_DEP}.
\end{align}
\end{subequations}

Note that the optimization in the second-stage is performed with respect to the expected value $\mathbb{E}_{\mathbb{P}}[\mathcal{Q}(\bm x,\bm \xi)]:=\sum_{s \in \mathcal{S}} p^s \bm q^{s\top} \bm y^s$. However, in a more general setting, this term can be replaced by a coherent risk measure $\varrho_{\mathbb{P}}[\mathcal{Q}(\bm x,\bm \xi)]$ admitting a linear decomposition, such as the \textit{Conditional Value-at-Risk} (CVaR) or the \textit{mean-semideviation} risk measure (see, e.g., \cite{DenRus2024}). Such extensions are explored in the numerical experiments.

In what follows, we outline the L-shaped decomposition method proposed in \cite{BirLou1988,SlyWet1969} to solve Problem \eqref{eq:two_stage_problem}. We begin by rewriting such problem as:
\begin{subequations} \label{eq:DEP_ref}
\begin{align}
    \min_{\bm x,\theta^s} & \quad \bm c^\top \bm x+\sum_{s\in\mathcal{S}}p^s\theta^s \label{eq:obj_DEP_ref} \\
    \text{s.t.} & \quad  \mathcal{Q}(\bm x,\bm \xi^s)\leq \theta^s & \forall s\in\mathcal{S} \label{eq:constr1_DEP_ref}\\
    & \quad \theta^s \in \mathbb{R} & \forall s \in \mathcal{S}\\
    & \quad \bm x \in \mathcal{X} \label{eq:constr2_DEP_ref},
\end{align}
\end{subequations}
where, for each $s\in\mathcal{S}$, $\theta^s$ is an epigraph variable of the recourse problem $\mathcal{Q}(\bm x,\bm \xi^s)$.

If first-stage variables $\bm x\in\mathcal{X}$ are fixed and the realization $\bm \xi^s$ is observed, the right-hand side of \eqref{eq:constr1_second_stage_problem} becomes known. Thus, scenario subproblem \eqref{eq:second_stage_problem} can be dualized by means of dual variables $\bm \lambda\in\mathbb{R}^p$, yielding to:
\begin{subequations} \label{eq:scenario_subproblem_DUAL}
\begin{align}
    \mathcal{Q}_{dual}(\bm x,\bm \xi^s):=\max_{\bm \lambda} & \quad (\bm h^s-\bm T^s\bm x)^\top \bm \lambda \label{eq:obj_scenario_subproblem_DUAL} \\
    \text{s.t.} & \quad \bm W^\top \bm  \lambda \leq \bm q^s \label{eq:constr1_scenario_subproblem_DUAL}.
\end{align}
\end{subequations}

If the dual feasible region $\Lambda^s:=\{\bm \lambda \in \mathbb{R}^p:\bm W^\top \bm  \lambda \leq \bm q^s\}$, defined by \eqref{eq:constr1_scenario_subproblem_DUAL}, is not empty, as in relatively complete recourse case \cite{BirLou2011}, Problem \eqref{eq:scenario_subproblem_DUAL} can be either unbounded or feasible for any possible choice of $\bm x$ \cite{RahCraGenRei2017}.

Based on duality theory, in the case of unboundedness of the dual problem \eqref{eq:scenario_subproblem_DUAL}, its corresponding primal problem \eqref{eq:second_stage_problem} is infeasible. Therefore, if $\mathcal{R}(\Lambda^s)$ is the set of extreme rays of $\Lambda^s$, there exists a direction $\bm r \in \mathcal{R}(\Lambda^s)$ for which $(\bm h^s-\bm T^s\bm x)^\top \bm r > 0$. To avoid this situation and prevent movements in any potential direction of unboundedness, the following \emph{feasibility cut} is imposed in Problem \eqref{eq:DEP_ref}:
\begin{equation} \label{eq:feasibility_cut}
   (\bm h^s-\bm T^s\bm x)^\top \bm r \leq 0 \quad \forall \bm r \in \mathcal{R}(\Lambda^s), \forall s\in\mathcal{S}.
\end{equation}

On the other hand, in the case of feasibility of Problem \eqref{eq:scenario_subproblem_DUAL}, its optimal solution is one of the extreme points $\bm e \in \mathcal{E}(\Lambda^s)$ of the dual feasible region, being $\mathcal{E}(\Lambda^s)$ the set of extreme points of $\Lambda^s$. Thus, to bound the optimal value of Problem \eqref{eq:scenario_subproblem_DUAL}, and hence of Problem \eqref{eq:second_stage_problem} by strong duality, the following \emph{optimality cut} is added to Problem \eqref{eq:DEP_ref}:
\begin{equation} \label{eq:optimality_cut}
   (\bm h^s-\bm T^s\bm x)^\top \bm e \leq \theta^s \quad \forall \bm e \in \mathcal{E}(\Lambda^s), \forall s\in\mathcal{S}.
\end{equation}

Consequently, Problem \eqref{eq:DEP_ref} can be reformulated as follows:
\begin{subequations} \label{eq:master_problem}
\begin{align}
     \min_{\bm x,\theta^s} & \quad \bm c^\top \bm x + \sum_{s\in\mathcal{S}}p^s\theta^s \label{eq:obj_master_problem} \\
    \text{s.t.} & \quad (\bm h^s-\bm T^s\bm x)^\top \bm r \leq 0 & \forall \bm r \in \mathcal{R}(\Lambda^s), \forall s\in\mathcal{S} \label{eq:feasibility_cuts}\\
    & \quad (\bm h^s-\bm T^s\bm x)^\top \bm e \leq \theta^s & \forall \bm e \in \mathcal{E}(\Lambda^s), \forall s\in\mathcal{S} \label{eq:optimality_cuts}\\
    & \quad \theta^s \in \mathbb{R} & \forall s \in \mathcal{S}\\
    & \quad \bm x \in \mathcal{X} \label{eq:constr_master_problem}.
\end{align}
\end{subequations}
We will refer to Problem \eqref{eq:master_problem} as the \emph{Benders master problem}.

Note that solving Problem \eqref{eq:master_problem} requires enumerating all the extreme points $\bm e\in \mathcal{E}(\Lambda^s)$ and all the extreme rays $\bm r\in \mathcal{R}(\Lambda^s)$ for all $s\in\mathcal{S}$, which is generally not practical. To address this issue, the L-shaped method relaxes Problem \eqref{eq:master_problem} and solves it iteratively, considering only a subset of constraints \eqref{eq:feasibility_cuts} and \eqref{eq:optimality_cuts} and yielding a trial solution $\overline{\bm x}\in\mathcal{X}$ at each iteration. The dual subproblem \eqref{eq:scenario_subproblem_DUAL} is then solved with $\bm x=\overline{\bm x}$ in \eqref{eq:obj_scenario_subproblem_DUAL} for each scenario $s \in \mathcal{S}$ with the aim of identifying any possibly violated optimality and feasibility cuts. When these violated cuts are determined, they are added to the relaxation of the master problem and the process is repeated. Note that the procedure converges in a finite number of steps since both $|\mathcal{E}(\Lambda^s)|$ and $|\mathcal{R}(\Lambda^s)|$ are finite, for all $s\in\mathcal{S}$. This solution process is often referred to as the \emph{multi-cut} version of the L-shaped method, as one cut is constructed for each scenario \cite{BirLou1988}. Conversely, it is possible to aggregate all second-stage epigraph variables $\theta^s$ into a single variable $\Theta\in\mathbb{R}$, leading to the following \emph{single-cut} reformulation of the master problem \cite{SlyWet1969}:
\begin{subequations} \label{eq:master_problem_singlecut}
\begin{align}
     \min_{\bm x,\Theta} & \quad \bm c^\top \bm x + \Theta \label{eq:obj_master_problem_singlecut} \\
    \text{s.t.} & \quad (\bm h^s-\bm T^s\bm x)^\top \bm r \leq 0 & \forall \bm r \in \mathcal{R}(\Lambda^s), \forall s\in\mathcal{S} \label{eq:feasibility_cuts_singlecut}\\
    & \quad \sum_{s \in \mathcal{S}}p^s(\bm h^s-\bm T^s\bm x)^\top \bm e^s \leq \Theta & \forall \bm e^s \in \mathcal{E}(\Lambda^s), \forall s \in \mathcal{S} \label{eq:optimality_cuts_singlecut}\\
    & \quad \Theta \in \mathbb{R}\\
    & \quad \bm x \in \mathcal{X} \label{eq:constr_master_problem_singlecut}.
\end{align}
\end{subequations}
Note that the optimality cut \eqref{eq:optimality_cuts_singlecut} can be included in the master problem only when the trial first-stage solution $\overline{\bm x}$ yields feasible subproblems $\mathcal{Q}(\overline{\bm x},\bm \xi^s)$ for all scenarios $s \in \mathcal{S}$. If this condition is not satisfied, only the feasibility cuts \eqref{eq:feasibility_cuts_singlecut} are added, until a first-stage solution that ensures feasibility across all subproblems is found, allowing the generation of the optimality cut \eqref{eq:optimality_cuts_singlecut}.

When comparing the multi-cut and single-cut formulations, the main differences lie in how much dual information each approach retains and correspondingly in the resulting size of the Benders master problem \cite{TruNtaSch2010}. The single-cut version aggregates the dual information from all single-scenario subproblems into a single optimality cut, which results in a weaker informative power. In contrast, the multi-cut method preserves the full dual information by generating one cut per scenario, often resulting in faster convergence in terms of number of iterations. However, as the number of scenarios increases, the master problem in the multi-cut formulation grows significantly because of the large number of cuts \cite{Ram-PicLjuMor2023}.

\subsection{Refinement chain of probabilities} \label{sec_refchain}
As in \cite{MagPfl2016}, assume that the support $\Omega$ of the uncertain parameter $\bm \xi$ can be decomposed into finite sequences of subsets, forming a \textit{refinement chain} with $J$ levels. Each level $j=1,\ldots,J$ of the refinement chain is a collection $\big\{\Omega_{i}^{(j)}\big\}_{i=1}^{m_j}$ of $m_j$ subsets containing scenarios sampled from $\mathcal{S}$, as follows:
\begin{equation} \label{eq:refinement_chain_omega}
\begin{array}{clc}
     \text{level }J& & \Omega_1^{(J)}=\Omega\\
     \vdots & &\vdots\\
     \text{level }j & & \big(\Omega^{(j)}_{1},\Omega^{(j)}_{2},\ldots, \Omega^{(j)}_{m_j}\big)\\
     \vdots &&\vdots\\
     \text{level }1 &&\big(\Omega^{(1)}_{1},\Omega^{(1)}_{2},\ldots, \Omega^{(1)}_{m_1}\big).
\end{array}
\end{equation}

In the construction of the refinement chain \eqref{eq:refinement_chain_omega} two properties must hold:
\begin{itemize}
    \item[(R1)] at each level $j=1,\ldots,J$, the union of subsets $\big\{\Omega_{i}^{(j)}\big\}_{i=1}^{m_j}$ covers the whole support, i.e.:
    \begin{equation*}
    \Omega=\bigcup_{i=1}^{m_j}\Omega_{i}^{(j)} \quad \forall j=1,\ldots,J;
    \end{equation*}
    \item[(R2)] each set $\Omega_{i}^{(j)}$ at level $j=2,\ldots,J$ is the union of sets $\{\Omega_{k_1}^{(j-1)}, \ldots, \Omega_{k_{N_i}}^{(j-1)}\}$ from the more refined collection at level $j-1$, i.e.:
    \begin{equation*}
    \Omega_{i}^{(j)}=\bigcup_{\ell=1}^{N_i}  \Omega_{k_\ell}^{(j-1)} \quad \forall j=2,\ldots,J, \forall i=1,\ldots,m_j.
    \end{equation*}
\end{itemize}
The refinement chain \eqref{eq:refinement_chain_omega} of $\Omega$ is associated with a corresponding chain of dissections of the probability measure $\mathbb{P}$ defined as follows:
\begin{equation} \label{eq:refinement_chain_probabilities}
\begin{array}{clc}
     \text{level } J&&\mathbb{P}_1^{(J)}=\mathbb{P}\\
     \vdots&&\vdots\\
     \text{level } j &&\big(\mathbb{P}^{(j)}_{1},\mathbb{P}^{(j)}_{2},\ldots, \mathbb{P}^{(j)}_{m_j}\big)\\
     \vdots&&\vdots\\
     \text{level } 1 &&\big(\mathbb{P}^{(1)}_{1},\mathbb{P}^{(1)}_{2},\ldots, \mathbb{P}^{(1)}_{m_1}\big).
\end{array}
\end{equation}
The following properties on the chain of probabilities must be satisfied:
\begin{itemize}
    \item[(P1)] each probability measure $\mathbb{P}_{i}^{(j)}$ has support $\Omega_{i}^{(j)}$, $\forall j=1,\ldots,J$, $\forall i=1,\ldots,m_j$;
    
    \item[(P2)] at each level $j=1,\ldots,J$, the probability measure $\mathbb{P}$ can be decomposed into a convex combination of probabilities $\big\{\mathbb{P}_{i}^{(j)}\big\}_{i=1}^{m_j}$ with weights $\pi_{i}^{(j)}$, i.e.: 
    \begin{equation} \label{eq:intra_level_probability}
    \mathbb{P} = \sum_{i=1}^{m_j} \pi_{i}^{(j)} \mathbb{P}_{i}^{(j)},
    \end{equation}
    where $\pi_{i}^{(j)} \geq 0$ $\forall i=1,\ldots,m_j$ and  $\sum_{i=1}^{m_j} \pi_{i}^{(j)}=1$, $\forall j=1,\ldots,J$;
    
    \item[(P3)] for each level $j=2,\ldots,J$ and for each $i=1,\ldots,m_j$, the probability measure $\mathbb{P}_{i}^{(j)}$ can be decomposed into a convex combination of probability measures $\{\mathbb{P}_{k_1}^{(j-1)},\ldots,\mathbb{P}_{k_{N_i}}^{(j-1)}\}$ from the refined collection at level $j-1$ with weights $\{\pi_{k_1,i}^{(j-1,j)},\ldots,\pi_{k_{N_i},i}^{(j-1,j)}\}$, i.e.:
    \begin{equation} \label{eq:inter_level_probability}
    \mathbb{P}_{i}^{(j)} = \sum_{\ell=1}^{N_i} \pi_{k_\ell,i}^{(j-1,j)} \mathbb{P}_{k_\ell}^{(j-1)},
    \end{equation}
    with $\pi_{k_\ell,i}^{(j-1,j)} \geq 0$ $\forall \ell=1,\ldots,N_i$ and $\sum_{\ell=1}^{N_i} \pi_{k_\ell,i}^{(j-1,j)}=1$, $\forall j=2,\ldots,J$. 
\end{itemize}

Within the notation introduced so far, we will refer to $\pi_{i}^{(j)}$ and $\pi_{k_\ell,i}^{(j-1,j)}$ as \emph{intra-level} and \emph{inter-level} weights, respectively.

A refinement chain can be constructed in multiple ways, provided that properties (R1)--(R2) and (P1)--(P3) are satisfied. However, two natural approaches can be considered: (i) keeping one or more scenarios fixed across subsets, so they appear in all subsets at all refinement levels; (ii) forming disjoint partitions at each refinement level. In the following, we examine these two cases further.

\paragraph{Fixed scenarios}
Assume that at level $j\in\{1,\ldots,J\}$, the first $f\geq 1$ scenarios are fixed and included in all subset $\Omega_{i}^{(j)}$, $\forall i=1,\ldots,m_j$. Consequently, the probability measure $\mathbb{P}_{i}^{(j)}$ supported on $\Omega_{i}^{(j)}$ can be represented as the sum of Dirac measures $\delta_{\bm \omega^s}$ over the first $f$ scenarios and the remaining ones, as follows:
\begin{equation*}
    \mathbb{P}_{i}^{(j)}=\sum_{s=1}^fp^s \delta_{\bm \omega^{s}}+\sum_{s=f+1}^{| \Omega_{i}^{(j)} |} \frac{1-\sum_{k=1}^{f}{p^k}}{\sum_{k=f+1}^{| \Omega_{i}^{(j)}|} p^k}\hspace*{2pt}p^s\delta_{\bm \omega^{s}},
\end{equation*}
where $|\Omega_{i}^{(j)}|$ represents the number of scenarios in $\Omega_{i}^{(j)}$. Thus, by defining $p^s_{\Omega_{i}^{(j)}}:=\mathbb{P}_{i}^{(j)}(\omega^{s})$ as the probability of scenario $\bm \omega^{s}$ as an element of subset $\Omega_{i}^{(j)}$, we have that:
\begin{equation} \label{eq:prob_scenario_ffixed}
 p^s_{\Omega_{i}^{(j)}}= \begin{cases}
        p^{s} & \forall s=1,\ldots,f  \\
        \displaystyle\frac{1-\sum_{k=1}^{f}p^{k}}{\sum_{k=f+1}^{| \Omega_{i}^{(j)}|} p^{k}} \hspace*{2pt} p^{s} & \forall s=f+1,\ldots,| \Omega_{i}^{(j)}|.
    \end{cases}
\end{equation}
The intra-level weights $\pi_{i}^{(j)}$ in the convex combination \eqref{eq:intra_level_probability} are calculated as follows:
\begin{equation}  \label{eq:weights_scenario_ffixed}
    \pi_{i}^{(j)} = \frac{\sum_{k=f+1}^{|\Omega_{i}^{(j)}|} p^{k}}{1-\sum_{k=1}^{f}p^{k}}.
\end{equation}

\paragraph{Disjoint partitions} Suppose that at level $j\in\{1,\ldots,J\}$ all subsets $\Omega_{i}^{(j)}$ are disjoint. In this case, the probability measure $\mathbb{P}_{i}^{(j)}$ equals to:
\begin{equation*}
    \mathbb{P}_{i}^{(j)} = \sum_{s:\bm \omega^s\in\Omega_{i}^{(j)}} \frac{p^s}{\sum_{k:\bm \omega^k\in\Omega_{i}^{(j)}}p^{k}}\hspace*{2pt}\delta_{\bm \omega^{s}},
\end{equation*}
and accordingly the probability $p^s_{\Omega_{i}^{(j)}}$ is:
\begin{equation}  \label{eq:prob_scenario_disjoint}
    p^s_{\Omega_{i}^{(j)}}=\frac{p^s}{\sum_{k:\bm \omega^k\in\Omega_{i}^{(j)}}p^k}.
\end{equation}
For disjoint partitions, the intra-level weights $\pi_{i}^{(j)}$ are as follows:
\begin{equation} \label{eq:weights_scenario_disjoint}
    \pi_{i}^{(j)}=\sum_{k:\bm \omega^k\in\Omega_{i}^{(j)}} p^k.
\end{equation}

The following remarks hold for the two considered approaches (fixed scenarios and disjoint partitions), when examining two consecutive levels $j$ and $j-1$ of the refinement chain, with $j\in\{2,\ldots,J\}$.

\begin{remark}
    The inter-level weights $\pi_{k_\ell,i}^{(j-1,j)}$ can be expressed in closed form as a function of the corresponding intra-level weights $\pi_{k_\ell}^{(j-1)}$ and $\pi_i^{(j)}$, as established in the following lemma.
    \begin{lemma}\label{lemma_relation_phi_f_disjoint}
Fix a refinement level $j\in\{2,\ldots,J\}$. For every $i=1,\ldots,m_j$ and $\ell=1,\ldots,N_i$, it holds that:
\begin{equation}\label{eq:relation_phi_f_disjoint}
    \pi_{k_\ell,i}^{(j-1,j)} = \frac{\pi_{k_\ell}^{(j-1)}}{\pi_i^{(j)}}.
\end{equation}
\end{lemma}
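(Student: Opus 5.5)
The plan is to prove \eqref{eq:relation_phi_f_disjoint} separately for the two constructions the remark refers to: disjoint partitions and fixed scenarios. In each case we compute $\mathbb{P}_i^{(j)}$ and $\mathbb{P}_{k_\ell}^{(j-1)}$ scenario by scenario and read off the weights in \eqref{eq:inter_level_probability}. Before doing so, we make two observations. First, the weights in (P3) are uniquely determined whenever the measures $\mathbb{P}_{k_\ell}^{(j-1)}$, $\ell=1,\ldots,N_i$, are linearly independent. For disjoint children this holds because their supports are disjoint. For fixed scenarios it holds because each child carries a distinct block of non-fixed scenarios. Second, by (R2) the children partition the relevant part of $\Omega_i^{(j)}$.

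\textbf{Disjoint partitions.} Write $\pi_{k_\ell}^{(j-1)}=\sum_{s\in\Omega_{k_\ell}^{(j-1)}}p^s$ and $\pi_i^{(j)}=\sum_{s\in\Omega_i^{(j)}}p^s$, as in \eqref{eq:weights_scenario_disjoint}. By (R2) and disjointness, $\pi_i^{(j)}=\sum_\ell \pi_{k_\ell}^{(j-1)}$. Now evaluate $\sum_\ell (\pi_{k_\ell}^{(j-1)}/\pi_i^{(j)})\,\mathbb{P}_{k_\ell}^{(j-1)}$ at a scenario $s\in\Omega_{k_\ell}^{(j-1)}$. Using \eqref{eq:prob_scenario_disjoint}, only one term is nonzero, and it equals
\[
\frac{\pi_{k_\ell}^{(j-1)}}{\pi_i^{(j)}}\cdot\frac{p^s}{\pi_{k_\ell}^{(j-1)}}=\frac{p^s}{\pi_i^{(j)}},
\]
which is $\mathbb{P}_i^{(j)}(\bm\omega^s)$. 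The proposed weights are nonnegative and sum to one, so by uniqueness they are the inter-level weights.

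\textbf{Fixed scenarios.} Let $F=\sum_{k=1}^f p^k$, and for any subset $A$ let $\sigma(A)$ denote the total probability of its non-fixed scenarios. By \eqref{eq:weights_scenario_ffixed}, $\pi_{k_\ell}^{(j-1)}=\sigma(\Omega_{k_\ell}^{(j-1)})/(1-F)$ and $\pi_i^{(j)}=\sigma(\Omega_i^{(j)})/(1-F)$, so the proposed ratio equals $\sigma(\Omega_{k_\ell}^{(j-1)})/\sigma(\Omega_i^{(j)})$. The non-fixed parts of the children partition the non-fixed part of the parent, so these ratios sum to one. We then check \eqref{eq:inter_level_probability} in two steps.

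\begin{itemize}
\item At a fixed scenario $s\le f$, every measure assigns mass $p^s$, and a convex combination of $p^s$ is again $p^s$.
\item At a non-fixed scenario $s$ in child $k_\ell$, only that child contributes, giving
\[
\frac{\sigma(\Omega_{k_\ell}^{(j-1)})}{\sigma(\Omega_i^{(j)})}\cdot\frac{(1-F)\,p^s}{\sigma(\Omega_{k_\ell}^{(j-1)})}=\frac{(1-F)\,p^s}{\sigma(\Omega_i^{(j)})},
\]
which matches \eqref{eq:prob_scenario_ffixed} for the parent.
\end{itemize}

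The main obstacle is the bookkeeping in the fixed-scenario case. The formulas \eqref{eq:prob_scenario_ffixed} and \eqref{eq:weights_scenario_ffixed} index non-fixed scenarios as $f+1,\ldots,|\Omega_i^{(j)}|$ but really mean the non-fixed scenarios belonging to $\Omega_i^{(j)}$. The proof must interpret them that way and rely on (R2) applied to the non-fixed parts, i.e., that the children's non-fixed sets are disjoint and cover the parent's. I would state this interpretation explicitly at the start of the case.
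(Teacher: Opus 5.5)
Your proof is correct, but it takes a different route from the paper's.

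\textbf{The paper's route.} The paper never uses the closed-form expressions \eqref{eq:prob_scenario_ffixed}--\eqref{eq:weights_scenario_disjoint}. It expands $\mathbb{P}$ in two ways:
\begin{itemize}
\item via (P2) at level $j$ followed by (P3), which gives $\sum_{i}\sum_{\ell}\pi_i^{(j)}\pi_{k_\ell,i}^{(j-1,j)}\mathbb{P}_{k_\ell}^{(j-1)}$;
\item via (P2) at level $j-1$, regrouped by parent, which gives $\sum_{i}\sum_{\ell}\pi_{k_\ell}^{(j-1)}\mathbb{P}_{k_\ell}^{(j-1)}$. The regrouping uses the fact that, in both constructions, every $\Omega_k^{(j-1)}$ lies in exactly one $\Omega_{\tilde i}^{(j)}$.
\end{itemize}
Matching coefficients then yields $\pi_i^{(j)}\pi_{k_\ell,i}^{(j-1,j)}=\pi_{k_\ell}^{(j-1)}$. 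This argument is shorter and handles both constructions at once, since it relies only on (P2), (P3) and the unique-parent structure. However, the coefficient matching tacitly requires linear independence of all the measures $\{\mathbb{P}_k^{(j-1)}\}_{k=1}^{m_{j-1}}$, which the paper never states. It also takes for granted that (P3) actually holds for these two constructions.

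\textbf{Your route.} Your case-by-case computation makes the uniqueness step explicit, and it only needs independence among the siblings of a single parent. It also proves existence: it shows that the explicit measures satisfy \eqref{eq:inter_level_probability} with the claimed weights, and it recovers Lemma~\ref{lemma_relations_intralevelwights_consecutivelevels} along the way. The price is the fixed-scenario bookkeeping you yourself flag. In particular, you need the same $f$ fixed scenarios at both levels, and the children's non-fixed blocks must partition the parent's. You also depend on the specific formulas of the two constructions, rather than on the abstract properties alone.
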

        \begin{proof}
    See Appendix \ref{sec_app_A1}.
        \end{proof}
\end{remark}

\begin{remark}
    The relationship between the intra-level weights associated with two consecutive refinement levels can be characterized as follows.
\end{remark}
    \begin{lemma}\label{lemma_relations_intralevelwights_consecutivelevels}
        According to property (R2), suppose that $\Omega_{i}^{(j)}=\bigcup_{\ell=1}^{N_i} \Omega_{k_\ell}^{(j-1)}$, with $j\in\{2,\ldots,J\}$ and $i\in\{1,\ldots,m_j\}$. Then:
        \begin{equation}\label{eq_relations_intralevelwights_consecutivelevels}
            \pi_{i}^{(j)}=\sum_{\ell=1}^{N_i}\pi_{k_\ell}^{(j-1)}.
        \end{equation}
    \end{lemma}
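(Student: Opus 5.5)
The natural route is to combine (P2) and (P3) with Lemma \ref{lemma_relation_phi_f_disjoint}. For a fixed $j\in\{2,\ldots,J\}$ and $i\in\{1,\ldots,m_j\}$, property (P3) says that $\pi_{k_1,i}^{(j-1,j)},\ldots,\pi_{k_{N_i},i}^{(j-1,j)}$ are the weights of a convex combination, so
\begin{equation*}
\sum_{\ell=1}^{N_i}\pi_{k_\ell,i}^{(j-1,j)}=1.
\end{equation*}
Substituting the closed form \eqref{eq:relation_phi_f_disjoint} gives
\begin{equation*}
\sum_{\ell=1}^{N_i}\frac{\pi_{k_\ell}^{(j-1)}}{\pi_i^{(j)}}=1.
\end{equation*}
Multiplying through by $\pi_i^{(j)}$ then yields \eqref{eq_relations_intralevelwights_consecutivelevels} at once.

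The only delicate point is that Lemma \ref{lemma_relation_phi_f_disjoint} divides by $\pi_i^{(j)}$, so the case $\pi_i^{(j)}=0$ has to be handled separately. There I would argue directly, as follows.

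\textbf{Disjoint partitions.} By \eqref{eq:weights_scenario_disjoint}, $\pi_i^{(j)}$ is the total $\mathbb{P}$-mass of $\Omega_i^{(j)}$. By (R2), $\Omega_i^{(j)}$ is the union of the $\Omega_{k_\ell}^{(j-1)}$. These sets are disjoint, because level $j-1$ is a disjoint partition. Summing the masses via \eqref{eq:weights_scenario_disjoint} at level $j-1$ gives the identity with no division at all. This argument also covers the degenerate case.

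\textbf{Fixed scenarios.} Using \eqref{eq:weights_scenario_ffixed}, each weight is the mass of the non-fixed scenarios of the subset, normalized by $1-\sum_{k=1}^f p^k$. The non-fixed parts of the $\Omega_{k_\ell}^{(j-1)}$ partition the non-fixed part of $\Omega_i^{(j)}$, which is implicit in the construction, so the numerators add up and the identity follows.

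As a sanity check, the general identity can also be recovered measure-theoretically. Inserting (P3) into (P2) at level $j$ gives one decomposition of $\mathbb{P}$. Comparing it with (P2) at level $j-1$ shows that the coefficients of $\mathbb{P}_{k_\ell}^{(j-1)}$ must coincide. Uniqueness holds here because each $\Omega^{(j-1)}_k$ lies in exactly one $\Omega_i^{(j)}$.

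\textbf{Main obstacle.} I expect the hardest part to be justifying this uniqueness, or more generally the implicit assumption that the refined pieces carry non-overlapping mass. That is why I would lean on the two-line computation from Lemma \ref{lemma_relation_phi_f_disjoint} and handle the two constructions explicitly.
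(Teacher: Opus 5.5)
Your proposal is correct and in effect contains two proofs. The direct two-case computation matches the paper's proof: sum the masses via \eqref{eq:weights_scenario_disjoint}, or the non-fixed numerators via \eqref{eq:weights_scenario_ffixed}, using that every (non-fixed) scenario of $\Omega_i^{(j)}$ lies in exactly one child $\Omega_{k_\ell}^{(j-1)}$. On its own it is complete.

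Your primary route is genuinely different. You obtain the identity as a corollary of Lemma \ref{lemma_relation_phi_f_disjoint} together with the normalization $\sum_{\ell=1}^{N_i}\pi_{k_\ell,i}^{(j-1,j)}=1$ from (P3). This is legitimate and not circular, since the paper proves Lemma \ref{lemma_relation_phi_f_disjoint} without using the present lemma. It is shorter and does not depend on the explicit weight formulas, so it holds for any chain where the closed form for the inter-level weights holds.

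The corollary route does inherit the tacit step in the proof of Lemma \ref{lemma_relation_phi_f_disjoint} (Appendix \ref{sec_app_A1}). There, the coefficients of two decompositions of $\mathbb{P}$ are ``compared,'' which requires the measures $\mathbb{P}_k^{(j-1)}$ to be linearly independent. That holds here because their non-fixed parts have disjoint, nonempty supports. This is the uniqueness issue you flag, but it belongs to the cited lemma, not to your argument. The paper's direct computation avoids both that step and any division by $\pi_i^{(j)}$, at the cost of being tied to the two specific constructions.

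Finally, the degenerate case $\pi_i^{(j)}=0$ cannot occur in this setting. The conditional probabilities \eqref{eq:prob_scenario_ffixed}--\eqref{eq:prob_scenario_disjoint} already divide by a quantity proportional to $\pi_i^{(j)}$, and so does the statement of Lemma \ref{lemma_relation_phi_f_disjoint}. Your separate treatment of that case is harmless but unnecessary.
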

\begin{proof}
    See Appendix \ref{sec_app_A2}.
\end{proof}

The construction of the refinement chain reviewed so far, together with the results on the L-shaped method discussed in Section \ref{sec_lshaped}, provides the theoretical foundation required for the development of the proposed L-shaped refinement chain cuts method presented in the following sections.

\section{The L-shaped refinement chain cuts method} \label{sec_integrating_ref_chain_into_Lshaped}
In this section, we present the proposed L-shaped refinement chain cuts method. Section \ref{sec_lshaped_fixedlevel} first introduces the method at a fixed refinement level. Then, Section \ref{sec_lshaped_consecutivelevels} establishes its theoretical properties across consecutive refinement levels by analyzing the relationships between dual feasible regions and between Benders cuts.

\subsection{The L-shaped refinement chain cuts method at a fixed refinement level} \label{sec_lshaped_fixedlevel}
As described in Section \ref{sec_lshaped}, in the classical L-shaped decomposition method a sequence of $|\mathcal{S}|$ single-scenario dual subproblems \eqref{eq:scenario_subproblem_DUAL} is iteratively solved to identify feasibility and optimality cuts \eqref{eq:feasibility_cuts}--\eqref{eq:optimality_cuts}. Suppose now that a refinement chain \eqref{eq:refinement_chain_omega} is provided and consider a given level $j\in\{1,\ldots,J\}$ composed by $m_j$ subsets \{$\Omega_1^{(j)},\ldots,\Omega_{m_j}^{(j)}\}$. We begin by reformulating Problem \eqref{eq:DEP_ref} so that, in the second stage, scenarios are not treated individually but are instead grouped within these subsets.

For each $i=1,\ldots,m_j$, we introduce $\theta_{i}^{(j)}$ as an epigraph variable of the recourse function $\mathcal{Q}(\bm x,\bm \xi_{i}^{(j)})$ defined over subset $\Omega_i^{(j)}$ of scenarios, where $\bm \xi^{(j)}_i:=(\bm T^{(j)}_i,\bm h^{(j)}_i, \bm q^{(j)}_i)$ denotes the realization of the uncertain parameters within $\Omega_{i}^{(j)}$.

The resulting \textit{two-stage stochastic problem at level $j$} can thus be written as:
\begin{subequations} \label{eq:DEP_ref_omega_i_j}
\begin{align}
    \min_{\bm x,\theta^{(j)}_i} & \quad \bm c^\top \bm x + \sum_{i=1}^{m_j}\pi_{i}^{(j)}\theta^{(j)}_i \label{eq:obj_DEP_ref_omega_i_j} \\
    \text{s.t.} & \quad  \mathcal{Q}(\bm x,\bm \xi^{(j)}_i)\leq \theta^{(j)}_i & \forall i=1,\ldots,m_j \label{eq:constr1_DEP_ref_omega_i_j} \\
    & \quad \theta_i^{(j)} \in \mathbb{R} & \forall i = 1,\ldots,m_j \label{eq:constr2_DEP_ref_omega_i_j} \\
    & \quad \bm x \in \mathcal{X} \label{eq:constr3_DEP_ref_omega_i_j},
\end{align}
\end{subequations}
where the recourse problem $\mathcal{Q}(\bm x,\bm\xi_i^{(j)})$ is defined as:
\begin{subequations} \label{eq:scenario_subproblem_omega_i_j}
\begin{align}
    \mathcal{Q}(\bm x,\bm \xi^{(j)}_i):=\min_{\bm y^s:\bm \omega^s\in\Omega_{i}^{(j)}} & \quad \sum_{s:\bm \omega^s\in\Omega_{i}^{(j)}}p^s_{\Omega_{i}^{(j)}}\bm q^{s\top} \bm y^s \label{eq:obj_scenario_subproblem_omega_i_j} \\
    \text{s.t.} & \quad \bm W\bm y^s=\bm h^s-\bm T^s\bm x & \forall s:\bm \omega^s\in\Omega_{i}^{(j)} \label{eq:constr1_scenario_subproblem_omega_i_j}\\
    & \quad \bm y^s\geq \bm 0 & \forall s:\bm \omega^s\in\Omega_{i}^{(j)} \label{eq:constr2_scenario_subproblem_omega_i_j}.
\end{align}
\end{subequations}
The weights $\pi_i^{(j)}$ in the objective function \eqref{eq:obj_DEP_ref_omega_i_j} correspond to the intra-level weights introduced in Section \ref{sec_refchain}.

For each scenario $\bm \omega^s\in\Omega_{i}^{(j)}$, we introduce a dual variable $\bm \lambda^s\in\mathbb{R}^p$ associated with constraint \eqref{eq:constr1_scenario_subproblem_omega_i_j}. The dual problem of Problem \eqref{eq:scenario_subproblem_omega_i_j} is then given by:
\begin{subequations} \label{eq:scenario_subproblem_DUAL_omega_i_j}
\begin{align}
    \mathcal{Q}_{dual}(\bm x,\bm \xi_i^{(j)}):=\max_{\bm \lambda^s:\bm \omega^s\in\Omega_{i}^{(j)}} & \quad \sum_{s:\bm\omega^s\in\Omega_{i}^{(j)}}(\bm h^s-\bm T^s\bm x)^\top \bm \lambda^s \label{eq:obj_scenario_subproblem_DUAL_omega_i_j} \\
    \text{s.t.} & \quad \bm W^\top \bm \lambda^s \leq p^s_{\Omega_{i}^{(j)}}\bm q^s  \qquad \forall s:\bm \omega^s\in\Omega_{i}^{(j)}  \label{eq:constr1_scenario_subproblem_DUAL_omega_i_j},
\end{align}
\end{subequations}
where constraints \eqref{eq:constr1_scenario_subproblem_DUAL_omega_i_j} define the dual feasible region $(\Lambda^{(j)}_i)^s :=\{\bm \lambda^s \in \mathbb{R}^p:\bm W^\top \bm \lambda^s \leq p^s_{\Omega_{i}^{(j)}}\bm q^s\}$ for each scenario $\bm \omega^s\in\Omega_{i}^{(j)}$. Considering the collection of all scenarios in $\Omega_{i}^{(j)}$, the dual feasible region $\Lambda^{(j)}_i$ of Problem \eqref{eq:scenario_subproblem_DUAL_omega_i_j} is then:
\begin{equation}\label{eq:feasible_region_dual_omega_i_j}
    \Lambda^{(j)}_i:=\bigtimes_{s:\bm \omega^s\in\Omega_{i}^{(j)}} (\Lambda^{(j)}_i)^s \subseteq\mathbb{R}^{p \cdot |\Omega_{i}^{(j)}|}.
\end{equation}
Let $\mathcal{E}(\Lambda^{(j)}_i)$ and $\mathcal{R}(\Lambda^{(j)}_i)$ denote the sets of extreme points and extreme rays of $\Lambda^{(j)}_i$, respectively, for each $i=1,\ldots,m_j$. Based on this construction, we define the \textit{Benders master problem at level $j$ of the refinement chain} as:
\begin{subequations} \label{eq:master_problem_omega_i_j}
\begin{align}
     \min_{\bm x,\theta^{(j)}_i} & \quad \bm c^\top \bm x + \sum_{i=1}^{m_j}\pi_{i}^{(j)}\theta^{(j)}_i \label{eq:obj_master_problem_omega_i_j} \\
    \text{s.t.} & \quad (\bm h^{(j)}_i-\bm T^{(j)}_i\bm x)^\top \bm r \leq 0 & \forall \bm r \in \mathcal{R}(\Lambda^{(j)}_i), \forall i=1,\ldots,m_j \label{eq:feasibility_cuts_omega_i_j}\\
    & \quad (\bm h^{(j)}_i-\bm T^{(j)}_i\bm x)^\top \bm e \leq \theta^{(j)}_i & \forall \bm e \in \mathcal{E}(\Lambda^{(j)}_i), \forall i=1,\ldots,m_j \label{eq:optimality_cuts_omega_i_j}\\
    & \quad \theta_i^{(j)} \in \mathbb{R} & \forall i = 1,\ldots,m_j \label{eq:constr1_master_problem_omega_i_j} \\
    & \quad \bm x \in \mathcal{X} \label{eq:constr2_master_problem_omega_i_j}.
\end{align}
\end{subequations}

Compared with the classical L-shaped formulation, the dual feasible region $\Lambda^{(j)}_i$ in \eqref{eq:feasible_region_dual_omega_i_j} is not fixed, as it depends on the composition of the scenario subset $\Omega_{i}^{(j)}$. Similarly, the dual feasible region $\Lambda^s$ in \eqref{eq:constr1_scenario_subproblem_DUAL} depends on the scenario $s\in\mathcal{S}$. Nevertheless, at each iteration of Problem \eqref{eq:master_problem_omega_i_j}, only $m_j$ dual subproblems \eqref{eq:scenario_subproblem_DUAL_omega_i_j} need to be solved, rather than one for each scenario, as in the classical multi-cut L-shaped method.

The following proposition shows that the multi-cut version of the L-shaped method \cite{BirLou1988} can be viewed as a special case of the proposed L-shaped refinement chain cuts method.

\begin{proposition} \label{prop:multicut}
Consider a refinement chain with $J$ levels based on disjoint scenario subsets. Then, the Benders master problem \eqref{eq:master_problem_omega_i_j} at the finest refinement level, i.e., $j=1$, coincides with the classical multi-cut Benders master problem \eqref{eq:master_problem}.
\end{proposition}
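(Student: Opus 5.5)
The plan is to show that, at level $j=1$ of a disjoint refinement chain, every ingredient of the level-$j$ master problem \eqref{eq:master_problem_omega_i_j} reduces term by term to the corresponding ingredient of \eqref{eq:master_problem}. The identification rests on one interpretation, which I make explicit: the finest level of a disjoint chain is the partition of $\Omega$ into singletons, so $m_1=|\mathcal{S}|$ and $\Omega_s^{(1)}=\{\bm\omega^s\}$ for $s\in\mathcal{S}$ after relabelling the subsets. This is the step I expect to be the main obstacle, because (R1), (R2) and disjointness alone do not force singletons. The proof should state this convention as the meaning of ``finest'' and note that it is consistent with (R2), since every coarser set is a union of singletons.

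Under this convention I would first compute the weights. By \eqref{eq:prob_scenario_disjoint}, $p^s_{\Omega_s^{(1)}}=p^s/p^s=1$, assuming $p^s>0$ (scenarios with zero probability can be discarded). By \eqref{eq:weights_scenario_disjoint}, $\pi_s^{(1)}=p^s$. The objective \eqref{eq:obj_master_problem_omega_i_j} therefore becomes $\bm c^\top\bm x+\sum_{s\in\mathcal{S}}p^s\theta^{(1)}_s$, which matches \eqref{eq:obj_master_problem} once $\theta^{(1)}_s$ is identified with $\theta^s$.

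Next I would treat the subproblems. Since $\Omega_s^{(1)}$ contains only $\bm\omega^s$, the product \eqref{eq:feasible_region_dual_omega_i_j} has a single factor. That factor is $(\Lambda^{(1)}_s)^s=\{\bm\lambda\in\mathbb{R}^p:\bm W^\top\bm\lambda\le 1\cdot\bm q^s\}=\Lambda^s$. In the same way, $\bm\xi^{(1)}_s=\bm\xi^s$, so $(\bm T^{(1)}_s,\bm h^{(1)}_s)=(\bm T^s,\bm h^s)$. Consequently Problems \eqref{eq:scenario_subproblem_omega_i_j} and \eqref{eq:scenario_subproblem_DUAL_omega_i_j} coincide with \eqref{eq:second_stage_problem} and \eqref{eq:scenario_subproblem_DUAL}. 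Because the polyhedra are equal, $\mathcal{E}(\Lambda^{(1)}_s)=\mathcal{E}(\Lambda^s)$ and $\mathcal{R}(\Lambda^{(1)}_s)=\mathcal{R}(\Lambda^s)$.

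Finally, substituting these identities shows that the cut families \eqref{eq:feasibility_cuts_omega_i_j}--\eqref{eq:optimality_cuts_omega_i_j} are exactly \eqref{eq:feasibility_cuts}--\eqref{eq:optimality_cuts}. The remaining constraints on $\theta$ and $\bm x$ are identical in both problems. Hence the two master problems coincide, as formulations and therefore in feasible sets and optimal solutions.
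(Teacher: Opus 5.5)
Your proposal is correct and follows the paper's own proof: take the finest level to consist of singletons $\Omega_s^{(1)}=\{\bm\omega^s\}$, compute $p^s_{\Omega_s^{(1)}}=1$ and $\pi_s^{(1)}=p^s$ from \eqref{eq:prob_scenario_disjoint}--\eqref{eq:weights_scenario_disjoint}, and substitute into \eqref{eq:master_problem_omega_i_j}. You go beyond the paper in two ways: you state the singleton convention explicitly as the meaning of ``finest'' (the paper simply asserts it), and you check directly that $\Lambda^{(1)}_s=\Lambda^s$, so the extreme points, extreme rays and cuts coincide; the underlying argument is the same.
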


\begin{proof}
At the finest refinement level, each subset of the refinement chain contains exactly one scenario. Hence, $m_1=|\mathcal{S}|$, and for every scenario $s\in\mathcal{S}$, the corresponding subset is $\Omega_{s}^{(1)}=\{\bm\omega^s\}$. Therefore, the conditional probability satisfies $p^s_{\Omega_{s}^{(1)}}=1$, while the corresponding intra-level weight is $\pi_s^{(1)}=p^s$, according to \eqref{eq:prob_scenario_disjoint} and \eqref{eq:weights_scenario_disjoint}, respectively. Substituting into \eqref{eq:master_problem_omega_i_j} shows that, at level $j=1$, the Benders master problem \eqref{eq:master_problem_omega_i_j} coincides with the classical multi-cut Benders master problem \eqref{eq:master_problem}.
\end{proof}

The single-cut version of the L-shaped method can be also interpreted as a special case of the proposed approach, corresponding to the coarsest refinement level, i.e., $j=J$, of the refinement chain \eqref{eq:refinement_chain_omega}. However, the equivalence between the two formulations is not straightforward, as it requires proving that the optimality cuts \eqref{eq:optimality_cuts_singlecut} and \eqref{eq:optimality_cuts_omega_i_j} are equivalent. This result, together with the theoretical convergence properties of the proposed approach, is established in the next section.

\subsection{The L-shaped refinement chain cuts method across consecutive refinement levels} \label{sec_lshaped_consecutivelevels}
In this section, we investigate the theoretical relationships between the dual feasible regions and the Benders optimality and feasibility cuts associated with two consecutive refinement levels of the refinement chain \eqref{eq:refinement_chain_omega}. These results provide the foundation for establishing the convergence of the proposed L-shaped refinement chain cuts method, proved at the end of this section.

Section \ref{subsec_extpoint} analyzes the relationships between the dual feasible regions and their extreme points at consecutive refinement levels, whereas Section \ref{subsec_benders_cuts_theory} investigates the corresponding relationships between Benders optimality and feasibility cuts.

\subsubsection{Theoretical properties of dual feasible regions and extreme points} \label{subsec_extpoint}

According to property (R2), suppose that at level $j\in\{2,\ldots,J\}$ of the refinement chain, set $\Omega^{(j)}_i$, with $i\in\{1,\ldots,m_j\}$, is the union of $N_i$ subsets $\{\Omega_{k_\ell}^{(j-1)}\}_{\ell=1}^{N_i}$ from level $j-1$.
The set $\Omega^{(j)}_i$ and each subset $\Omega_{k_\ell}^{(j-1)}$ are associated with a recourse problem \eqref{eq:scenario_subproblem_omega_i_j}, having corresponding dual feasible regions $\Lambda^{(j)}_i \subseteq\mathbb{R}^{p \cdot |\Omega_{i}^{(j)}|}$ and $\Lambda^{(j-1)}_{k_\ell}\subseteq \mathbb{R}^{p \cdot |\Omega_{k_\ell}^{(j-1)}|}$, respectively. In general, each $\Omega_{k_\ell}^{(j-1)}$ may contain a different number of scenarios, so its cardinality $|\Omega_{k_\ell}^{(j-1)}|$ may vary across the $N_i$ subsets. However, since by construction $|\Omega_{i}^{(j)}|\geq |\Omega_{k_\ell}^{(j-1)}|$ for all $\ell=1,\ldots,N_i$ and therefore $p\cdot |\Omega_{i}^{(j)}|\geq p  \cdot |\Omega_{k_\ell}^{(j-1)}|$, we assume that every point in each $\Lambda^{(j-1)}_{k_\ell}$ has exactly $p \cdot |\Omega_{i}^{(j)}|$ components. This can be obtained by setting to zero all components corresponding to scenarios not present in $\Omega_{k_\ell}^{(j-1)}$.

In the next two lemmas, we begin by examining the connections between the dual feasible region $\Lambda^{(j)}_{i}$ at level $j$ with the collection of dual feasible regions $\{\Lambda^{(j-1)}_{k_\ell}\}_{\ell=1}^{N_i}$ at level $j-1$ of the refinement chain.

\begin{lemma} \label{lemma:feasibility_convex_comb}
Let $\{\bm \lambda_{k_\ell}^{(j-1)}\}_{\ell=1}^{N_i}$ be a collection of points such that $\bm \lambda_{k_\ell}^{(j-1)}\in\Lambda^{(j-1)}_{k_\ell}$ for all $\ell=1,\ldots,N_i$. Then:
\begin{equation*}
    \sum_{\ell=1}^{N_i}\pi_{k_\ell,i}^{(j-1,j)}\bm \lambda_{k_\ell}^{(j-1)},
\end{equation*}
belongs to $\Lambda^{(j)}_i$.
\end{lemma}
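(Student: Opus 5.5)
The plan is to verify membership in $\Lambda^{(j)}_i$ componentwise. By \eqref{eq:feasible_region_dual_omega_i_j}, $\Lambda^{(j)}_i$ is the Cartesian product over the scenarios $\bm\omega^s\in\Omega^{(j)}_i$ of the sets $(\Lambda^{(j)}_i)^s=\{\bm\lambda^s:\bm W^\top\bm\lambda^s\le p^s_{\Omega^{(j)}_i}\bm q^s\}$. It therefore suffices to fix an arbitrary scenario $\bm\omega^s\in\Omega^{(j)}_i$ and show that the $s$-block of $\bar{\bm\lambda}:=\sum_{\ell=1}^{N_i}\pi^{(j-1,j)}_{k_\ell,i}\bm\lambda^{(j-1)}_{k_\ell}$ satisfies this inequality.

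\textbf{Step 1 (the $s$-block).} Using the zero-padding convention, the $s$-block of $\bm\lambda^{(j-1)}_{k_\ell}$ is some $\bm\lambda^s_{k_\ell}\in(\Lambda^{(j-1)}_{k_\ell})^s$ when $\bm\omega^s\in\Omega^{(j-1)}_{k_\ell}$, and $\bm 0$ otherwise. Write $L_s:=\{\ell:\bm\omega^s\in\Omega^{(j-1)}_{k_\ell}\}$. The $s$-block of $\bar{\bm\lambda}$ is then
\begin{equation*}
\bar{\bm\lambda}^s=\sum_{\ell\in L_s}\pi^{(j-1,j)}_{k_\ell,i}\bm\lambda^s_{k_\ell}.
\end{equation*}

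\textbf{Step 2 (apply the lower-level constraints).} Since $\bm W^\top$ is linear and the inter-level weights are nonnegative by (P3), multiplying each constraint $\bm W^\top\bm\lambda^s_{k_\ell}\le p^s_{\Omega^{(j-1)}_{k_\ell}}\bm q^s$ by its weight and summing gives
\begin{equation*}
\bm W^\top\bar{\bm\lambda}^s\le\Big(\sum_{\ell\in L_s}\pi^{(j-1,j)}_{k_\ell,i}\,p^s_{\Omega^{(j-1)}_{k_\ell}}\Big)\bm q^s .
\end{equation*}
This holds regardless of the sign of $\bm q^s$, because the right-hand side is a single scalar multiple of $\bm q^s$.

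\textbf{Step 3 (identify the coefficient).} This is the key step: show that the scalar coefficient above equals $p^s_{\Omega^{(j)}_i}$. Evaluate the measure identity \eqref{eq:inter_level_probability} of (P3) at the atom $\bm\omega^s$:
\begin{equation*}
\mathbb{P}^{(j)}_i(\bm\omega^s)=\sum_{\ell=1}^{N_i}\pi^{(j-1,j)}_{k_\ell,i}\,\mathbb{P}^{(j-1)}_{k_\ell}(\bm\omega^s).
\end{equation*}
By (P1), $\mathbb{P}^{(j-1)}_{k_\ell}(\bm\omega^s)=0$ for $\ell\notin L_s$. For $\ell\in L_s$, it equals $p^s_{\Omega^{(j-1)}_{k_\ell}}$ by definition of these conditional probabilities. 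The left-hand side is $p^s_{\Omega^{(j)}_i}$ by the same definition. Hence the coefficient equals $p^s_{\Omega^{(j)}_i}$ exactly, and $\bar{\bm\lambda}^s\in(\Lambda^{(j)}_i)^s$.

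The main care point is Step 3: one must use the general property (P3) rather than the explicit formulas \eqref{eq:prob_scenario_ffixed} or \eqref{eq:prob_scenario_disjoint}, so the argument covers both fixed-scenario and disjoint-partition chains. One must also note that padded zero blocks contribute nothing and need not satisfy any constraint themselves. As a consistency check, Lemma \ref{lemma_relation_phi_f_disjoint} can be used to confirm the identity in the two explicit constructions. Since $s$ was arbitrary, $\bar{\bm\lambda}\in\Lambda^{(j)}_i$.
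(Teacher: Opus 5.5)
Your proposal is correct and follows essentially the same route as the paper. Fix a scenario $\bm\omega^s\in\Omega_i^{(j)}$, use linearity of $\bm W^\top$ and nonnegativity of the inter-level weights to bound the $s$-block by $\big(\sum_{\ell}\pi_{k_\ell,i}^{(j-1,j)}p^s_{\Omega_{k_\ell}^{(j-1)}}\big)\bm q^s$, and identify this coefficient with $p^s_{\Omega_i^{(j)}}$ via (P3); the only difference is that you handle the zero-padded blocks explicitly through $L_s$ and (P1), whereas the paper sums over all $\ell$ and implicitly treats missing scenarios as having zero conditional probability.
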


\begin{proof}
    See Appendix \ref{sec_app_A3}.
\end{proof}

This result implies that any convex combination of points in $\{\Lambda_{k_\ell}^{(j-1)}\}_{\ell=1}^{N_i}$ with coefficients given by the inter-level weights $\{\pi_{k_\ell,i}^{(j-1,j)}\}_{\ell=1}^{N_i}$ belongs to $\Lambda_i^{(j)}$. Conversely, in the two cases discussed in Section \ref{sec_refchain} for the construction of the refinement chain (fixed scenarios and disjoint partitions), each point in $\Lambda_i^{(j)}$ can be characterized as a convex combination of points in $\{\Lambda_{k_\ell}^{(j-1)}\}_{\ell=1}^{N_i}$, as shown in the following lemma.

\begin{lemma}\label{lemma:suff_cond}
    Suppose that the subsets $\{\Omega_{k_\ell}^{(j-1)}\}_{\ell=1}^{N_i}$ are either disjoint or with fixed scenarios appearing in all of them. Then, for each $\bm \lambda^{(j)}_i \in \Lambda^{(j)}_i$ there exists a collection of points $\{\bm \lambda_{k_\ell}^{(j-1)}\}_{\ell=1}^{N_i}$ with $\bm \lambda_{k_\ell}^{(j-1)}\in\Lambda^{(j-1)}_{k_\ell}$ for all $\ell=1,\ldots,N_i$ such that:
    \begin{equation} \label{eq_thesis_lemma}
    \sum_{\ell=1}^{N_i}\pi_{k_\ell,i}^{(j-1,j)}\bm \lambda_{k_\ell}^{(j-1)}=\bm \lambda^{(j)}_{i}.
\end{equation}
\end{lemma}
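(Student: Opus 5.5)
The proof is an explicit construction, done scenario by scenario. Because $\Lambda^{(j)}_i$ is the Cartesian product \eqref{eq:feasible_region_dual_omega_i_j}, it suffices, for a given $\bm\lambda^{(j)}_i=(\bm\lambda^s)_{s:\bm\omega^s\in\Omega^{(j)}_i}$, to define each block $(\bm\lambda_{k_\ell}^{(j-1)})^s$ separately. We then check two things for every $s$:
\begin{itemize}
\item feasibility of the block in $(\Lambda^{(j-1)}_{k_\ell})^s$ when $\bm\omega^s\in\Omega^{(j-1)}_{k_\ell}$, and that the block is zero otherwise (the padding convention);
\item the block identity $\sum_\ell \pi_{k_\ell,i}^{(j-1,j)}(\bm\lambda_{k_\ell}^{(j-1)})^s=\bm\lambda^s$.
\end{itemize}
Throughout we use Lemma \ref{lemma_relation_phi_f_disjoint}, which gives $\pi_{k_\ell,i}^{(j-1,j)}=\pi_{k_\ell}^{(j-1)}/\pi_i^{(j)}$, and Lemma \ref{lemma_relations_intralevelwights_consecutivelevels}. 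The dual constraints are positively homogeneous: if $\bm W^\top\bm\lambda\le p\bm q$, then $\bm W^\top(\alpha\bm\lambda)\le \alpha p\bm q$ for $\alpha\ge0$. So every construction is a rescaling of $\bm\lambda^s$ by the ratio of conditional probabilities.

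\emph{Disjoint case.} Each $\bm\omega^s\in\Omega^{(j)}_i$ lies in exactly one $\Omega^{(j-1)}_{k_\ell}$. Set $(\bm\lambda_{k_\ell}^{(j-1)})^s:=\bm\lambda^s/\pi_{k_\ell,i}^{(j-1,j)}$ for that $\ell$, and zero for the other indices.
\begin{itemize}
\item The block identity holds because only one term in the sum is nonzero.
\item For feasibility, \eqref{eq:prob_scenario_disjoint} together with \eqref{eq:weights_scenario_disjoint} gives $p^s_{\Omega^{(j)}_i}=p^s/\pi_i^{(j)}$ and $p^s_{\Omega^{(j-1)}_{k_\ell}}=p^s/\pi_{k_\ell}^{(j-1)}$. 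Hence $p^s_{\Omega^{(j)}_i}/\pi_{k_\ell,i}^{(j-1,j)}=p^s_{\Omega^{(j-1)}_{k_\ell}}$, and $\bm W^\top(\bm\lambda_{k_\ell}^{(j-1)})^s\le p^s_{\Omega^{(j-1)}_{k_\ell}}\bm q^s$ follows by homogeneity.
\end{itemize}
If some $\pi_{k_\ell}^{(j-1)}=0$, that subset carries no weight and we take any feasible point for it. I expect this degenerate case needs only a remark, provided the relevant dual regions are nonempty; I would assume this, as the paper implicitly does.

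\emph{Fixed-scenario case.} Scenarios $s>f$ again belong to exactly one lower-level subset. By \eqref{eq:prob_scenario_ffixed}–\eqref{eq:weights_scenario_ffixed}, $p^s_{\Omega}=p^s/\pi$ with the corresponding weight, so the same rescaling works verbatim.

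The fixed scenarios $s\le f$ appear in every subset, with the same conditional probability $p^s$ at both levels. For these, set $(\bm\lambda_{k_\ell}^{(j-1)})^s:=\bm\lambda^s$ for all $\ell$.
\begin{itemize}
\item Feasibility is immediate, since the constraint $\bm W^\top\bm\lambda^s\le p^s\bm q^s$ is the same at both levels.
\item The block identity holds because $\sum_\ell\pi_{k_\ell,i}^{(j-1,j)}=1$ by (P3).
\end{itemize}

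The main obstacle is bookkeeping rather than depth. We must verify that the fixed-scenario weights satisfy $\pi_{k_\ell}^{(j-1)}/\pi_i^{(j)}$ equal to the ratio of the non-fixed masses. Equivalently, $p^s_{\Omega^{(j-1)}_{k_\ell}}=p^s_{\Omega^{(j)}_i}\,\pi_i^{(j)}/\pi_{k_\ell}^{(j-1)}$ for $s>f$. This requires carefully reading \eqref{eq:prob_scenario_ffixed} with the index ranges interpreted as the non-fixed scenarios of each subset. I would write this identity out explicitly, cancelling the common factor $1-\sum_{k\le f}p^k$.
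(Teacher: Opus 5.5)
Your proposal is correct and follows essentially the same construction as the paper's proof. On each block, you rescale $(\bm\lambda^{(j)}_i)^s$ by $1/\pi_{k_\ell,i}^{(j-1,j)}$ on the unique lower-level subset containing each non-fixed scenario (zero elsewhere) and copy the block for the fixed scenarios; feasibility then comes from $p^s_{\Omega_i^{(j)}}/\pi_{k_\ell,i}^{(j-1,j)}=p^s_{\Omega_{k_\ell}^{(j-1)}}$, and the identity comes from uniqueness of the containing subset or from $\sum_\ell\pi_{k_\ell,i}^{(j-1,j)}=1$.

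The only flaw is your aside on $\pi_{k_\ell}^{(j-1)}=0$. Picking an arbitrary feasible point there does not rescue the identity, because the zero coefficient forces the combination to vanish on that subset's blocks while $\bm\lambda^s$ may be nonzero. So the lemma genuinely needs positive scenario probabilities, which the paper tacitly assumes; this cannot be dismissed with a remark.
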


\begin{proof}
    See Appendix \ref{sec_app_A4}.
\end{proof}

In the construction of the optimality cuts \eqref{eq:optimality_cuts_omega_i_j}, the extreme points of the dual feasible region $\Lambda^{(j)}_i$ at level $j$ are considered. The following proposition establishes a connection between the extreme points of $\Lambda^{(j)}_i$ at level $j$ and those of the collection $\{\Lambda^{(j-1)}_{k_\ell}\}_{\ell=1}^{N_i}$ at level $j-1$.

\begin{proposition}\label{prop:extremepoints_omegaij}
Suppose that the subsets $\{\Omega_{k_\ell}^{(j-1)}\}_{\ell=1}^{N_i}$ are either disjoint or with fixed scenarios appearing in all of them. Let $\{\bm e_{k_\ell}^{(j-1)}\}_{\ell=1}^{N_i}$ be a collection of points such that $\bm e_{k_\ell}^{(j-1)}\in\Lambda^{(j-1)}_{k_\ell}$ for all $\ell=1,\ldots,N_i$, with corresponding objective function values in \eqref{eq:obj_scenario_subproblem_DUAL_omega_i_j} denoted by $\{z_{k_\ell}^{(j-1)}\}_{\ell=1}^{N_i}$. Then:
\begin{equation}\label{eq:extr_point}
    \bm {e}^{(j)}_{i}:=\sum_{\ell=1}^{N_i}\pi_{k_\ell,i}^{(j-1,j)}\bm e_{k_\ell}^{(j-1)},
\end{equation}
is an extreme point of $\Lambda^{(j)}_i$ if and only if each $\bm e_{k_\ell}^{(j-1)}$ is an extreme point of $\Lambda_{k_\ell}^{(j-1)}$ for all $\ell=1,\ldots,N_i$. In this case, the corresponding objective function value $z_i^{(j)}$ satisfies:
\begin{equation*}
    z_i^{(j)}=\sum_{\ell=1}^{N_i}\pi_{k_\ell,i}^{(j-1,j)}z_{k_\ell}^{(j-1)}.
\end{equation*}
\end{proposition}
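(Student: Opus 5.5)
The plan is to exploit the product structure \eqref{eq:feasible_region_dual_omega_i_j}: both $\Lambda^{(j)}_i$ and each $\Lambda^{(j-1)}_{k_\ell}$ are Cartesian products, over scenarios, of polyhedra of the form $\{\bm\lambda^s:\bm W^\top\bm\lambda^s\le \alpha\,\bm q^s\}$. I would reduce everything to a scenario-by-scenario statement and use two standard facts. First, the extreme points of a product of polyhedra are exactly the tuples of extreme points of the factors. Second, a map $\bm\lambda\mapsto\beta\bm\lambda$ with $\beta>0$ is a bijection from $\{\bm W^\top\bm\lambda\le\alpha\bm q\}$ onto $\{\bm W^\top\bm\lambda\le\beta\alpha\bm q\}$ that preserves extreme points. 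Throughout I assume $p^s>0$ for all $s$, so that all weights $\pi_{k_\ell,i}^{(j-1,j)}$ are strictly positive.

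The first key step is the probability identity
\[
p^s_{\Omega_i^{(j)}}=\pi_{k_\ell,i}^{(j-1,j)}\,p^s_{\Omega_{k_\ell}^{(j-1)}}\quad \text{for every non-fixed } \bm\omega^s\in\Omega_{k_\ell}^{(j-1)}.
\]
I would obtain it from Lemma \ref{lemma_relation_phi_f_disjoint}, which gives $\pi_{k_\ell,i}^{(j-1,j)}=\pi_{k_\ell}^{(j-1)}/\pi_i^{(j)}$, combined with \eqref{eq:prob_scenario_disjoint}--\eqref{eq:weights_scenario_disjoint} in the disjoint case and with \eqref{eq:prob_scenario_ffixed}--\eqref{eq:weights_scenario_ffixed} in the fixed-scenario case. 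In both cases the normalising sums cancel.

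In the \emph{disjoint} case, consider the map $\Phi(\bm\lambda_{k_1},\dots,\bm\lambda_{k_{N_i}})=\sum_\ell \pi_{k_\ell,i}^{(j-1,j)}\bm\lambda_{k_\ell}$ built from the zero-padded vectors. Each scenario block of $\Phi$ is fed by exactly one $\ell$, and there it acts as multiplication by $\pi_{k_\ell,i}^{(j-1,j)}$. Hence $\Phi$ is a linear bijection from $\prod_\ell\Lambda^{(j-1)}_{k_\ell}$ onto $\Lambda^{(j)}_i$; this is consistent with Lemmas \ref{lemma:feasibility_convex_comb} and \ref{lemma:suff_cond}, and injectivity follows from disjointness. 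A linear bijection between polyhedra maps extreme points onto extreme points. Moreover, extreme points of $\prod_\ell\Lambda^{(j-1)}_{k_\ell}$ are exactly tuples of extreme points. This gives both directions of the equivalence at once.

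In the \emph{fixed-scenario} case, the non-fixed blocks behave exactly as in the disjoint case. The fixed scenarios $s\le f$ have $p^s_{\Omega}=p^s$ in every subset, so the block $(\bm e^{(j)}_i)^s=\sum_\ell\pi_{k_\ell,i}^{(j-1,j)}(\bm e^{(j-1)}_{k_\ell})^s$ is a convex combination of points of the \emph{same} polyhedron $\{\bm W^\top\bm\lambda\le p^s\bm q^s\}$. The ``only if'' direction then follows as before: if $\bm e^{(j)}_i$ is extreme, every block is extreme. For a fixed block, since an extreme point is not a proper convex combination of distinct points and all weights are positive, all the $(\bm e^{(j-1)}_{k_\ell})^s$ must coincide with it and hence are extreme. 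For the ``if'' direction, I would use that the fixed-scenario blocks of the $\bm e^{(j-1)}_{k_\ell}$ must agree. This is precisely what the construction in Lemma \ref{lemma:suff_cond} delivers, since the fixed components are shared across subsets. I expect this to be the main obstacle. If the blocks are allowed to differ, a combination of distinct extreme points is not extreme, so I will state explicitly that the points are taken in the common-fixed-component representation, i.e., the fixed blocks are identified across the $\Lambda^{(j-1)}_{k_\ell}$. Under that identification the argument reduces to the disjoint case.

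Finally, the objective identity is pure linearity. The objective in \eqref{eq:obj_scenario_subproblem_DUAL_omega_i_j} is $\sum_s(\bm h^s-\bm T^s\bm x)^\top\bm\lambda^s$, which is linear in the stacked vector, and the padded zero components contribute nothing. Substituting \eqref{eq:extr_point} and exchanging sums gives $z^{(j)}_i=\sum_\ell\pi_{k_\ell,i}^{(j-1,j)}z^{(j-1)}_{k_\ell}$. In the fixed case, the common fixed block contributes $\sum_\ell\pi_{k_\ell,i}^{(j-1,j)}=1$ times its value, consistent with that block being counted once in $z_i^{(j)}$.
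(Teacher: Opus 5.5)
Your argument is correct, with the fixed-scenario restriction you flag, and that restriction is genuinely needed. It is organised differently from the paper's proof. The paper never writes the combination map explicitly. It proves ``each $\bm e_{k_\ell}^{(j-1)}$ extreme $\Rightarrow$ $\bm e_i^{(j)}$ extreme'' by contradiction: it writes a non-extreme $\bm e_i^{(j)}$ as $\alpha\tilde{\bm e}_i^{(j)}+(1-\alpha)\hat{\bm e}_i^{(j)}$, decomposes both points via Lemma \ref{lemma:suff_cond}, and then ``compares'' with \eqref{eq:extr_point}. It proves the reverse implication by splitting a non-extreme $\bm e_{k_\ell}^{(j-1)}$ and pushing the two pieces into $\Lambda_i^{(j)}$ with Lemma \ref{lemma:feasibility_convex_comb}. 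The objective identity is the same linearity computation you give.

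You replace both contradiction arguments with one structural fact. In the disjoint case your $\Phi$ is a block-diagonal positive scaling, hence a linear isomorphism of $\prod_\ell\Lambda_{k_\ell}^{(j-1)}$ onto $\Lambda_i^{(j)}$, and the extreme points of a product are exactly the tuples of extreme points. This is shorter and gives directly the bijection $\prod_\ell\mathcal{E}(\Lambda_{k_\ell}^{(j-1)})\to\mathcal{E}(\Lambda_i^{(j)})$ that Proposition \ref{prop:optimal_extreme_points} and Theorem \ref{teo_convergence} later rely on. It also makes explicit the positivity of the weights, which the paper uses tacitly, e.g.\ to ensure the two points produced in its reverse step are distinct.

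Your caveat on fixed scenarios is justified and is not a weakness of your argument. The paper's ``comparing'' step silently assumes the decomposition is unique, i.e.\ that $\Phi$ is injective. That holds for disjoint subsets but fails on a fixed block: any choice of fixed blocks with the same weighted average yields the same $\bm e_i^{(j)}$. Your example settles it whenever $N_i\ge 2$ and $\{\bm\lambda:\bm W^\top\bm\lambda\le p^s\bm q^s\}$ has two distinct vertices. Placing those vertices in the fixed block of different $\bm e_{k_\ell}^{(j-1)}$ makes the fixed block of $\bm e_i^{(j)}$ a proper convex combination, which is not a vertex. So the ``if'' direction is false as stated. Restricting to collections with a common fixed block, which is exactly what the construction in Lemma \ref{lemma:suff_cond} produces, is therefore necessary, not merely convenient. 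The numerical example in Appendix \ref{sec_app_B} does not expose this, because there $p=1$ and each scenario polyhedron is a half-line with a single vertex.
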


\begin{proof}
    See Appendix \ref{sec_app_A5}.
\end{proof}

Thanks to Proposition \ref{prop:extremepoints_omegaij}, we can show that the single-cut version of the L-shaped approach \cite{SlyWet1969} can be interpreted as a special case of our L-shaped refinement chain cuts method. This equivalence is formally stated and proved in the following proposition.

\begin{proposition}
    Consider a refinement chain with $J$ levels, composed either of disjoint subsets or of subsets with fixed scenarios. Then, the single-cut optimality cut \eqref{eq:optimality_cuts_singlecut} coincides with the optimality cut \eqref{eq:optimality_cuts_omega_i_j} at the top level $J$.
\end{proposition}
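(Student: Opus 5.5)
At the top level $J$ there is a single subset $\Omega_1^{(J)}=\Omega$ with $\pi_1^{(J)}=1$ and $\mathbb{P}_1^{(J)}=\mathbb{P}$. Hence $p^s_{\Omega_1^{(J)}}=p^s$ for every $s\in\mathcal{S}$. The plan is to describe $\mathcal{E}(\Lambda_1^{(J)})$ explicitly, substitute it into \eqref{eq:optimality_cuts_omega_i_j}, and compare the result with \eqref{eq:optimality_cuts_singlecut}. By \eqref{eq:feasible_region_dual_omega_i_j}, $\Lambda_1^{(J)}$ is the Cartesian product over $s\in\mathcal{S}$ of the sets $\{\bm\lambda^s:\bm W^\top\bm\lambda^s\le p^s\bm q^s\}$. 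We may assume $p^s>0$, since zero-probability scenarios can be discarded.

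\textbf{Step 1 (scaling).} For $p^s>0$ we have $\{\bm\lambda^s:\bm W^\top\bm\lambda^s\le p^s\bm q^s\}=p^s\Lambda^s$. Scaling by a positive constant maps extreme points to extreme points bijectively, so the extreme points of this factor are exactly the points $p^s\bm e^s$ with $\bm e^s\in\mathcal{E}(\Lambda^s)$.

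\textbf{Step 2 (product).} We use the standard fact that a point of a Cartesian product of polyhedra is extreme if and only if each of its components is extreme in its factor. This is precisely the mechanism behind Proposition \ref{prop:extremepoints_omegaij}, and one can alternatively reach the same conclusion by iterating that proposition down the chain from level $J$ to level $1$. For a disjoint chain, level $1$ reduces to singletons and the weights compose to $p^s$, via Lemma \ref{lemma_relation_phi_f_disjoint} and Lemma \ref{lemma_relations_intralevelwights_consecutivelevels}. It follows that
\[
\mathcal{E}(\Lambda_1^{(J)})=\bigl\{(p^s\bm e^s)_{s\in\mathcal{S}}:\ \bm e^s\in\mathcal{E}(\Lambda^s)\ \forall s\in\mathcal{S}\bigr\}.
\]

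\textbf{Step 3 (substitution).} Take $\bm e=(p^s\bm e^s)_s$ in \eqref{eq:optimality_cuts_omega_i_j}. Using the stacked definition of $\bm h_1^{(J)}-\bm T_1^{(J)}\bm x$, the inner product becomes $\sum_{s}p^s(\bm h^s-\bm T^s\bm x)^\top\bm e^s$, and $\theta_1^{(J)}$ plays the role of $\Theta$ (the objective coefficient is $\pi_1^{(J)}=1$). As $\bm e$ ranges over $\mathcal{E}(\Lambda_1^{(J)})$, the tuple $(\bm e^s)_s$ ranges over every choice of $\bm e^s\in\mathcal{E}(\Lambda^s)$. This is exactly the family of constraints \eqref{eq:optimality_cuts_singlecut}, so the two families of cuts coincide.

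\textbf{Main obstacle.} The delicate point is the fixed-scenario construction at level $J$. For level $J$ to cover all of $\Omega$ with the correct measure, one must check that \eqref{eq:prob_scenario_ffixed} reduces to $p^s$ when $\Omega_1^{(J)}=\Omega$. This holds because the normalizing factor $\bigl(1-\sum_{k\le f}p^k\bigr)/\sum_{k>f}p^k$ equals $1$ there. Once this check is done, the identification is the same as in the disjoint case, so I would verify it explicitly rather than rely on the iterated version of Proposition \ref{prop:extremepoints_omegaij}.
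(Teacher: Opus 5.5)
Your proof is correct, but it takes a different route from the paper's. The paper appends, without loss of generality, a bottom level of singletons. It then rewrites $\sum_{s}p^s\bm e^s$ as a nested convex combination via the inter-level weights of Lemma \ref{lemma_relation_phi_f_disjoint}, and applies Proposition \ref{prop:extremepoints_omegaij} level by level up to $J$ to conclude that this vector lies in $\mathcal{E}(\Lambda_1^{(J)})$.

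You instead characterize $\mathcal{E}(\Lambda_1^{(J)})$ directly, using only three facts: $p^s_{\Omega_1^{(J)}}=p^s$, the scaling $\{\bm\lambda:\bm W^\top\bm\lambda\le p^s\bm q^s\}=p^s\Lambda^s$, and the product rule for extreme points. This buys you several things:
\begin{itemize}
\item The argument is self-contained and the chain plays no role. Only the top level matters, so the disjoint/fixed-scenario hypothesis is not even needed.
\item It gives both inclusions between the two families of cuts at once. The paper explicitly proves only that every single-cut cut is a level-$J$ cut.
\item It sidesteps the auxiliary singleton level, which is delicate for fixed-scenario chains. The singleton of a fixed scenario is contained in every subset of the next level. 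So the unique-parent property used in the proof of Lemma \ref{lemma_relation_phi_f_disjoint}, and the regrouping of $\sum_s p^s\bm e^s$ by parent subsets, do not apply verbatim. Your decision to check the fixed-scenario case directly is therefore the right one.
\end{itemize}

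What the paper's route offers in exchange is an explicit picture of the single-cut cut as an iterated $\pi$-weighted aggregation of multi-cut cuts along the chain. This is the same mechanism behind Proposition \ref{prop:optimality_cuts_prop} and Algorithm \ref{algo:refchaincuts_algo}.

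One minor point: in your aside, ``level $1$ reduces to singletons'' is an extra assumption rather than a consequence of the hypotheses. Your main argument never uses it.
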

\begin{proof}
    Assume that at level 1 of the refinement chain each subset contains a single scenario. Note that, in the case of a refinement chain with fixed scenarios, it is always possible to introduce an additional bottom level consisting of single-scenario subsets, so this assumption is without loss of generality.
    
    At level $J$, we have $\Omega_{1}^{J}=\Omega$. Consequently, in \eqref{eq:optimality_cuts_omega_i_j} it holds that $\bm h_{1}^{(J)}=\bm h$, $\bm T_{1}^{(J)}=\bm T$, and $\theta_{1}^{(J)}=\Theta$. Therefore, to establish the equivalence between \eqref{eq:optimality_cuts_singlecut} and \eqref{eq:optimality_cuts_omega_i_j}, it suffices to show that $\sum_{s\in\mathcal{S}}p^s\bm e^s$, where $\bm e^s$ are extreme points at the bottom level, is an extreme point of $\Lambda_{1}^{(J)}$.
    
    Since single scenarios appear at level 1 of the refinement chain, we have $m_1=|\mathcal{S}|$, and, from \eqref{eq:weights_scenario_disjoint}, it follows that $p^s=\pi_{s}^{(1)}$ for all $s\in\mathcal{S}$. Considering levels 1 and 2 and applying \eqref{eq:relation_phi_f_disjoint}, we obtain:
    \begin{equation*}
        \begin{split}
            \sum_{s \in \mathcal{S}} p^s\bm e^s & = \sum_{s=1}^{m_1} \pi_{s}^{(1)}\bm e^s\\
            & = \sum_{s_2=1}^{m_2}\sum_{s:\Omega_{s}^{(1)}\subseteq \Omega_{s_2}^{(2)}} \pi_{s}^{(1)}\bm e^{s}\\
            & = \sum_{s_2=1}^{m_2}\sum_{s:\Omega_{s}^{(1)}\subseteq \Omega_{s_2}^{(2)}} \pi_{s,s_2}^{(1,2)}\pi_{s_2}^{(2)}\bm e^s\\
            & = \sum_{s_2=1}^{m_2} \pi_{s_2}^{(2)} \sum_{s:\Omega_{s}^{(1)}\subseteq \Omega_{s_2}^{(2)}} \pi_{s,s_2}^{(1,2)}\bm e^s\\
            & = \sum_{s_2=1}^{m_2} \pi_{s_2}^{(2)} \bm e_{s_2}^{(2)},
        \end{split}
    \end{equation*}
    where, in the last equality, we applied Proposition \ref{prop:extremepoints_omegaij}, since $\bm e_{s_2}^{(2)}:=\sum_{s:\Omega_{s}^{(1)}\subseteq \Omega_{s_2}^{(2)}} \pi_{s,s_2}^{(1,2)}\bm e^s$ is an extreme point of $\Lambda_{s_2}^{(2)}$, with $s_2\in\{1,\ldots,m_2\}$. The same procedure can be applied recursively along the refinement chain up to level $J-1$. At level $J$, there is a single set $\Omega_{1}^{(J)}=\Omega$, whose unique intra-level weight satisfies $\pi_{1}^{(J)}=1$ by property (P2). Applying \eqref{eq:relation_phi_f_disjoint} yields:
    \begin{equation*}
        \sum_{s \in \mathcal{S}}p^s\bm e^s=\sum_{s_{J-1}=1}^{m_{J-1}}\pi_{s_{J-1}}^{(J-1)}\bm e_{s_{J-1}}^{(J-1)} = \sum_{s_{J-1}=1}^{m_{J-1}}\pi_{s_{J-1},1}^{(J-1,J)}\bm e_{s_{J-1}}^{(J-1)},
    \end{equation*}
    which, by Proposition \ref{prop:extremepoints_omegaij}, is an extreme point of $\Lambda_{1}^{(J)}$. This completes the proof.
\end{proof}

In the following proposition, we establish the relationship between optimal extreme points at level $j-1$ and those at level $j$ of the refinement chain.

\begin{proposition} \label{prop:optimal_extreme_points}
Suppose that the subsets $\{\Omega_{k_\ell}^{(j-1)}\}_{\ell=1}^{N_i}$ are either disjoint or with fixed scenarios appearing in all of them. Let $\{\bm e_{k_\ell}^{(j-1)}\}_{\ell=1}^{N_i}$ be a collection of extreme points such that $\bm e_{k_\ell}^{(j-1)}\in\mathcal{E}(\Lambda^{(j-1)}_{k_\ell})$ for all $\ell=1,\ldots,N_i$, with corresponding objective function values \eqref{eq:obj_scenario_subproblem_DUAL_omega_i_j} denoted by $\{z_{k_\ell}^{(j-1)}\}_{\ell=1}^{N_i}$. Then:
\begin{equation}\label{eq:extr_opt_point}
    \bm {e}^{(j)}_{i}:=\sum_{\ell=1}^{N_i}\pi_{k_\ell,i}^{(j-1,j)}\bm e_{k_\ell}^{(j-1)},
\end{equation}
is an optimal extreme point of problem \eqref{eq:scenario_subproblem_DUAL_omega_i_j} at level $j$ if and only if each $\bm e_{k_\ell}^{(j-1)}$ is an optimal extreme point of the corresponding problem at level $j-1$ for all $\ell=1,\ldots,N_i$.
\end{proposition}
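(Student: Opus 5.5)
Fix $\bm x$ and write $g_i^{(j)}(\bm\lambda):=\sum_{s:\bm\omega^s\in\Omega_i^{(j)}}(\bm h^s-\bm T^s\bm x)^\top\bm\lambda^s$ for the objective \eqref{eq:obj_scenario_subproblem_DUAL_omega_i_j} at level $j$, and $g_{k_\ell}^{(j-1)}$ analogously. With the zero-padding convention, each $g_{k_\ell}^{(j-1)}$ is the restriction of the same linear functional $g_i^{(j)}$, because padded components are zero and contribute nothing. The argument therefore rests on two facts: linearity of this common functional, and the two-way correspondence between $\Lambda_i^{(j)}$ and the $\Lambda_{k_\ell}^{(j-1)}$ given by Lemmas \ref{lemma:feasibility_convex_comb} and \ref{lemma:suff_cond}. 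By Proposition \ref{prop:extremepoints_omegaij}, $\bm e_i^{(j)}$ is already an extreme point of $\Lambda_i^{(j)}$, and
\[
z_i^{(j)}=\sum_{\ell=1}^{N_i}\pi_{k_\ell,i}^{(j-1,j)}z_{k_\ell}^{(j-1)} .
\]
It remains only to compare optimality. I first establish
\[
\max_{\Lambda_i^{(j)}} g_i^{(j)}=\sum_{\ell=1}^{N_i}\pi_{k_\ell,i}^{(j-1,j)}\max_{\Lambda_{k_\ell}^{(j-1)}} g_{k_\ell}^{(j-1)},
\]
with the convention that both sides are $+\infty$ simultaneously.

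For ``$\geq$'', take any feasible $\bm\lambda_{k_\ell}^{(j-1)}$ at level $j-1$. By Lemma \ref{lemma:feasibility_convex_comb} their weighted combination lies in $\Lambda_i^{(j)}$, and by linearity its value is $\sum_\ell\pi_{k_\ell,i}^{(j-1,j)}g_{k_\ell}^{(j-1)}(\bm\lambda_{k_\ell}^{(j-1)})$. Taking suprema gives the inequality, and it also shows that unboundedness at some level-$(j-1)$ problem carrying positive weight propagates upward. Zero-weight indices need a separate remark: they would not affect the combination, so I would assume all inter-level weights are positive, as holds in both constructions. For ``$\leq$'', take any $\bm\lambda_i^{(j)}\in\Lambda_i^{(j)}$. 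Lemma \ref{lemma:suff_cond} decomposes it as $\sum_\ell\pi_{k_\ell,i}^{(j-1,j)}\bm\lambda_{k_\ell}^{(j-1)}$ with feasible components, so again by linearity $g_i^{(j)}(\bm\lambda_i^{(j)})\le\sum_\ell\pi_{k_\ell,i}^{(j-1,j)}\max g_{k_\ell}^{(j-1)}$.

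With this identity the equivalence follows. ($\Leftarrow$) If each $\bm e_{k_\ell}^{(j-1)}$ is optimal, then $z_i^{(j)}=\sum_\ell\pi_{k_\ell,i}^{(j-1,j)}\max g_{k_\ell}^{(j-1)}=\max g_i^{(j)}$, so $\bm e_i^{(j)}$ is optimal; it is extreme by Proposition \ref{prop:extremepoints_omegaij}. ($\Rightarrow$) If $\bm e_i^{(j)}$ is optimal, then
\[
\sum_\ell\pi_{k_\ell,i}^{(j-1,j)}\Bigl(\max g_{k_\ell}^{(j-1)}-z_{k_\ell}^{(j-1)}\Bigr)=0 .
\]
Each bracket is nonnegative and each weight is positive, so every bracket vanishes and each $\bm e_{k_\ell}^{(j-1)}$ is optimal. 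Extremality of the $\bm e_{k_\ell}^{(j-1)}$ is part of the hypothesis, consistent with Proposition \ref{prop:extremepoints_omegaij}.

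The main obstacle is checking that the objective really decomposes additively under the padding, especially in the fixed-scenario case. There the fixed scenarios appear in every $\Omega_{k_\ell}^{(j-1)}$, and the component $\bm\lambda^s$ of the combination is $\sum_\ell\pi_{k_\ell,i}^{(j-1,j)}\bm\lambda^s_{k_\ell}$. Since $(\bm h^s-\bm T^s\bm x)^\top(\cdot)$ is linear in $\bm\lambda^s$, the contributions still combine correctly. So the key identity depends only on linearity of $g$ in the stacked vector and on Lemmas \ref{lemma:feasibility_convex_comb}--\ref{lemma:suff_cond}, and I would state this explicitly, together with the positivity assumption on the weights.
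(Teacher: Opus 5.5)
Your proposal is correct and is essentially the paper's own argument. The paper proves the two implications by contradiction, decomposing a better level-$j$ point into level-$(j-1)$ points in one direction and recombining a better level-$(j-1)$ point into a level-$j$ point in the other; these are exactly the two inequalities (via Lemmas~\ref{lemma:suff_cond} and~\ref{lemma:feasibility_convex_comb}) that you package into the identity $\max_{\Lambda_i^{(j)}} g_i^{(j)}=\sum_{\ell}\pi_{k_\ell,i}^{(j-1,j)}\max_{\Lambda_{k_\ell}^{(j-1)}} g_{k_\ell}^{(j-1)}$.

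Your version is slightly tidier, since it also covers the unbounded case and makes explicit the positivity of the inter-level weights, which the paper's strict inequality uses without stating it. One caveat: like the paper, you take extremality of $\bm e_i^{(j)}$ from Proposition~\ref{prop:extremepoints_omegaij}, and in the fixed-scenario case this also requires the fixed-scenario blocks of the $\bm e_{k_\ell}^{(j-1)}$ to coincide. Otherwise a strictly positive convex combination of vertices of the common polyhedron $\{\bm\lambda:\bm W^\top\bm\lambda\le p^s\bm q^s\}$ that are not all equal is not a vertex.
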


\begin{proof}
    See Appendix \ref{sec_app_A6}.
\end{proof}

\subsubsection{Theoretical properties of Benders cuts}\label{subsec_benders_cuts_theory}
In this section, we investigate the relationship between consecutive levels $j-1$ and $j$ of the refinement chain in terms of Benders cuts. We begin by establishing the connection between the corresponding optimality cuts in the following proposition.

\begin{proposition} \label{prop:optimality_cuts_prop}
  Suppose that subsets $\{\Omega_{k_\ell}^{(j-1)}\}_{\ell=1}^{N_i}$ are either disjoint or with fixed scenarios appearing in all of them. Let $\{\bm e_{k_\ell}^{(j-1)}\}_{\ell=1}^{N_i}$ be a collection of extreme points such that $\bm e_{k_\ell}^{(j-1)}\in\mathcal{E}(\Lambda^{(j-1)}_{k_\ell})$ for all $\ell=1,\ldots,N_i$. Then:
\begin{equation}\label{eq:optimality_cuts_prop}
    (\bm h^{(j)}_{i}-\bm T^{(j)}_{i}\bm x)^\top \sum_{\ell=1}^{N_i}\pi_{k_\ell,i}^{(j-1,j)}\bm e^{(j-1)}_{k_\ell}\leq \theta_{i}^{(j)},
\end{equation}
is an optimality cut for Problem \eqref{eq:master_problem_omega_i_j} defined over the set $\Omega_{i}^{(j)}$.
\end{proposition}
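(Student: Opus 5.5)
The plan is to reduce the statement to Proposition \ref{prop:extremepoints_omegaij}, so that the left-hand side of \eqref{eq:optimality_cuts_prop} becomes an instance of the family \eqref{eq:optimality_cuts_omega_i_j}. Set
\begin{equation*}
\bm e^{(j)}_i:=\sum_{\ell=1}^{N_i}\pi_{k_\ell,i}^{(j-1,j)}\bm e^{(j-1)}_{k_\ell}.
\end{equation*}
Each $\bm e^{(j-1)}_{k_\ell}$ is, by hypothesis, an extreme point of $\Lambda^{(j-1)}_{k_\ell}$, and the subsets are disjoint or share fixed scenarios. The ``if'' direction of Proposition \ref{prop:extremepoints_omegaij} therefore gives $\bm e^{(j)}_i\in\mathcal{E}(\Lambda^{(j)}_i)$. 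Consequently \eqref{eq:optimality_cuts_prop} reads $(\bm h^{(j)}_i-\bm T^{(j)}_i\bm x)^\top\bm e^{(j)}_i\le\theta^{(j)}_i$, which is exactly one of the constraints \eqref{eq:optimality_cuts_omega_i_j} of the master problem \eqref{eq:master_problem_omega_i_j} for the set $\Omega^{(j)}_i$. Here the components are embedded into $\mathbb{R}^{p\cdot|\Omega^{(j)}_i|}$ by zero-padding, as agreed before Lemma \ref{lemma:feasibility_convex_comb}.

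As a sanity check I will also verify validity directly. Write $\bm h^{(j)}_i-\bm T^{(j)}_i\bm x$ as the stacked vector of $\bm h^s-\bm T^s\bm x$ over $\bm\omega^s\in\Omega^{(j)}_i$. By Lemma \ref{lemma:feasibility_convex_comb}, $\bm e^{(j)}_i\in\Lambda^{(j)}_i$. Weak duality between \eqref{eq:scenario_subproblem_omega_i_j} and \eqref{eq:scenario_subproblem_DUAL_omega_i_j} then gives, for every $\bm x$,
\begin{equation*}
(\bm h^{(j)}_i-\bm T^{(j)}_i\bm x)^\top\bm e^{(j)}_i\le\mathcal{Q}(\bm x,\bm\xi^{(j)}_i)\le\theta^{(j)}_i.
\end{equation*}
When the recourse is infeasible the middle term is $+\infty$, so the bound holds trivially. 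Hence the cut never excludes any feasible $(\bm x,\theta^{(j)}_i)$ of \eqref{eq:DEP_ref_omega_i_j}.

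The main obstacle is the bookkeeping in the fixed-scenario case. There a fixed scenario $s$ appears in several $\Omega^{(j-1)}_{k_\ell}$, so its block of $\bm e^{(j)}_i$ is a weighted sum of several blocks, and the coefficient vector in the inner product must pair each block with $\bm h^s-\bm T^s\bm x$ exactly once. To settle this I would write the inner product scenario by scenario and use the linearity
\begin{equation*}
(\bm h^{(j)}_i-\bm T^{(j)}_i\bm x)^\top\sum_{\ell}\pi_{k_\ell,i}^{(j-1,j)}\bm e^{(j-1)}_{k_\ell}=\sum_{\ell}\pi_{k_\ell,i}^{(j-1,j)}(\bm h^{(j-1)}_{k_\ell}-\bm T^{(j-1)}_{k_\ell}\bm x)^\top\bm e^{(j-1)}_{k_\ell}.
\end{equation*}
This identity holds because of the zero-padding. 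It also yields the alternative reading that the level-$j$ cut is the $\pi^{(j-1,j)}$-weighted convex combination of the level-$(j-1)$ cuts, consistent with the objective-value relation in Proposition \ref{prop:extremepoints_omegaij}.
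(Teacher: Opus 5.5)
Your argument is correct given the results already stated in the paper, and for the validity step it takes a different route. The paper forms the $\pi^{(j-1,j)}$-weighted combination of the level-$(j-1)$ cuts $(\bm h^{(j-1)}_{k_\ell}-\bm T^{(j-1)}_{k_\ell}\bm x)^\top\bm e^{(j-1)}_{k_\ell}\le\theta^{(j-1)}_{k_\ell}$. It rewrites the left-hand side via zero-padding, which is your closing identity. It cites Proposition \ref{prop:extremepoints_omegaij} for extremality, as you do. It then bounds the right-hand side by asserting $\theta^{(j-1)}_{k_\ell}\le\theta^{(j)}_i$ and using Lemmas \ref{lemma_relation_phi_f_disjoint} and \ref{lemma_relations_intralevelwights_consecutivelevels}, which give $\sum_\ell\pi^{(j-1,j)}_{k_\ell,i}\theta^{(j-1)}_{k_\ell}\le\theta^{(j)}_i$.

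You instead prove validity directly from Lemma \ref{lemma:feasibility_convex_comb} and weak duality, $(\bm h^{(j)}_i-\bm T^{(j)}_i\bm x)^\top\bm e^{(j)}_i\le\mathcal{Q}(\bm x,\bm\xi^{(j)}_i)\le\theta^{(j)}_i$. This is shorter, and it avoids the paper's monotonicity step, which compares epigraph variables of different master problems and does not hold in general. By (P3) and scenario-separability, $\mathcal{Q}(\bm x,\bm\xi^{(j)}_i)=\sum_\ell\pi^{(j-1,j)}_{k_\ell,i}\mathcal{Q}(\bm x,\bm\xi^{(j-1)}_{k_\ell})$ is a weighted average of the level-$(j-1)$ values. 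So some $\mathcal{Q}(\bm x,\bm\xi^{(j-1)}_{k_\ell})$ exceeds $\mathcal{Q}(\bm x,\bm\xi^{(j)}_i)$ unless they all coincide. It is this identity, not monotonicity, that makes the aggregation argument work. The paper's route does buy the explicit reading of the level-$j$ cut as an aggregate of level-$(j-1)$ cuts, which Algorithm \ref{algo:refchaincuts_algo} uses. Your closing linearity identity recovers that reading too.

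One correction to your own assessment: the delicate point in the fixed-scenario case is extremality, not bookkeeping. The bookkeeping is just linearity plus zero-padding. For a fixed scenario $s$, the block $(\bm e^{(j)}_i)^s=\sum_\ell\pi^{(j-1,j)}_{k_\ell,i}(\bm e^{(j-1)}_{k_\ell})^s$ is a convex combination of vertices of the same polyhedron $\{\bm\lambda:\bm W^\top\bm\lambda\le p^s\bm q^s\}$. It is therefore not a vertex unless all blocks carrying positive weight coincide. Since $\Lambda^{(j)}_i$ is a Cartesian product, $\bm e^{(j)}_i$ is then not an extreme point. So your claim that \eqref{eq:optimality_cuts_prop} is literally a member of \eqref{eq:optimality_cuts_omega_i_j} inherits this restriction from Proposition \ref{prop:extremepoints_omegaij}; the paper's proof depends on it in the same way. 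Your weak-duality argument needs only dual feasibility, so it establishes validity of the cut unconditionally. It should be your main argument, not a sanity check.
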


\begin{proof}
    Consider the collection of optimality cuts $(\bm h^{(j-1)}_{k_\ell}-\bm T^{(j-1)}_{k_{\ell}}\bm x)^\top \bm e^{(j-1)}_{k_\ell}\leq \theta_{k_{\ell}}^{(j-1)}$, one for each subset $\Omega^{(j-1)}_{k_\ell}$, with $\ell=1,\ldots,N_i$. If $\bm x\in\mathcal{X}$ satisfies all these inequalities, then it also satisfies their convex combination with respect to the inter-level weights, i.e.:
    \begin{equation} \label{prop_proof_eq1}
        \sum_{\ell=1}^{N_i}\pi_{k_\ell,i}^{(j-1,j)}(\bm h^{(j-1)}_{k_\ell}-\bm T^{(j-1)}_{k_{\ell}}\bm x)^\top \bm e^{(j-1)}_{k_\ell}\leq \sum_{\ell=1}^{N_i}\pi_{k_\ell,i}^{(j-1,j)}\theta_{k_\ell}^{(j-1)}.
    \end{equation}
    Here, we have assumed that the components corresponding to scenarios not present in $\Omega_{k_\ell}^{(j-1)}$ are set to zero, so that $\bm h^{(j-1)}_{k_\ell}\in\mathbb{R}^{p\cdot |\Omega_{i}^{(j)}|}$ and $\bm T^{(j-1)}_{k_{\ell}}\in\mathbb{R}^{p\cdot |\Omega_{i}^{(j)}|\times n}$ for all $k_\ell=1,\ldots,N_i$. Therefore, the left hand-side of \eqref{prop_proof_eq1} can be re-written as:
    \begin{equation} \label{prop_proof_eq2}
        \sum_{\ell=1}^{N_i}\pi_{k_\ell,i}^{(j-1,j)}(\bm h^{(j)}_{i}-\bm T^{(j)}_{i}\bm x)^\top \bm e^{(j-1)}_{k_\ell}=(\bm h^{(j)}_{i}-\bm T^{(j)}_{i}\bm x)^\top\sum_{\ell=1}^{N_i}\pi_{k_\ell,i}^{(j-1,j)}\bm e^{(j-1)}_{k_\ell}.
    \end{equation}
    By Proposition \ref{prop:extremepoints_omegaij}, the summation in the right-hand side of \eqref{prop_proof_eq2} defines an extreme point of $\Lambda_{i}^{(j)}$.
    
    It remains to prove the relationship between $\theta_i^{(j)}$ and $\sum_{\ell=1}^{N_i}\pi_{k_\ell,i}^{(j-1,j)}\theta_{k_\ell}^{(j-1)}$, i.e., between the right-hand side of \eqref{eq:optimality_cuts_prop} and \eqref{prop_proof_eq1}, respectively. From \eqref{eq:relation_phi_f_disjoint}, we have that:
    \begin{equation*}
        \sum_{\ell=1}^{N_i}\pi_{k_\ell,i}^{(j-1,j)}\theta_{k_\ell}^{(j-1)}= \sum_{\ell=1}^{N_i} \frac{\pi_{k_\ell}^{(j-1)}}{\pi_i^{(j)}}\theta_{k_\ell}^{(j-1)} = \frac{1}{\pi_{i}^{(j)}}\sum_{\ell=1}^{N_i}\pi_{k_\ell}^{(j-1)}\theta_{k_{\ell}}^{(j-1)}.
    \end{equation*}
    
    Since $\Omega_{k_\ell}^{(j-1)} \subseteq \Omega_i^{(j)}$ for all $\ell = 1, \ldots, N_i$, it follows that $\theta_{k_\ell}^{(j-1)} \leq \theta_i^{(j)}$. Indeed, each $\theta$ represents an upper bound on the optimal value of the recourse problem $\mathcal{Q}(\cdot, \cdot)$, and the feasible region may shrink when moving from $\Omega_{k_\ell}^{(j-1)}$ to $\Omega_i^{(j)}$, as potentially more constraints \eqref{eq:constr1_scenario_subproblem_omega_i_j} are included. Therefore:
    \begin{equation} \label{prop_proof_eq3}
        \frac{1}{\pi_{i}^{(j)}}\sum_{\ell=1}^{N_i}\pi_{k_\ell}^{(j-1)}\theta_{k_{\ell}}^{(j-1)} \leq \frac{1}{\pi_{i}^{(j)}}\sum_{\ell=1}^{N_i}\pi_{k_\ell}^{(j-1)}\theta_{i}^{(j)}=\theta_{i}^{(j)}\frac{1}{\pi_{i}^{(j)}}\sum_{\ell=1}^{N_i}\pi_{k_\ell}^{(j-1)} = \theta_{i}^{(j)},
    \end{equation}
    where the last equality follows from \eqref{eq_relations_intralevelwights_consecutivelevels}. Combining \eqref{prop_proof_eq1}--\eqref{prop_proof_eq2} with \eqref{prop_proof_eq3}, we obtain that:
    \begin{equation*}
        (\bm h^{(j)}_{i}-\bm T^{(j)}_{i}\bm x)^\top\sum_{\ell=1}^{N_i}\pi_{k_\ell,i}^{(j-1,j)}\bm e^{(j-1)}_{k_\ell} \leq \sum_{\ell=1}^{N_i}\pi_{k_\ell,i}^{(j-1,j)}\theta_{k_\ell}^{(j-1)} \leq \theta_i^{(j)}.
    \end{equation*}
    This completes the proof.
\end{proof}

Proposition \ref{prop:optimality_cuts_prop} establishes how to construct an optimality cut at level $j$ from optimality cuts at level $j-1$. The following result further characterizes the relationship between consecutive levels.

\begin{proposition} \label{prop:optimality_cuts_prop_j_jminus1}
Suppose that subsets $\{\Omega_{k_\ell}^{(j-1)}\}_{\ell=1}^{N_i}$ are either disjoint or with fixed scenarios appearing in all of them. Then, the left-hand side of the optimality cut \eqref{eq:optimality_cuts_omega_i_j} at level $j$ can be decomposed into a convex combination of left-hand sides of the corresponding optimality cuts at level $j-1$, with weights $\pi_{k_\ell,i}^{(j-1,j)}$, i.e.:
\begin{equation*}
    (\bm h_i^{(j)}-\bm T_{i}^{(j)} \bm x)^\top \bm e_{i}^{(j)} = \sum_{\ell=1}^{N_i} \pi_{k_\ell,i}^{(j-1,j)} [(\bm h_{k_{\ell}}^{(j-1)}-\bm T_{k_{\ell}}^{(j-1)} \bm x)^\top \bm e_{k_\ell}^{(j-1)}].
\end{equation*}
\end{proposition}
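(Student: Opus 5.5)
The plan is to combine Proposition \ref{prop:extremepoints_omegaij} with the zero-padding convention introduced at the start of Section \ref{subsec_extpoint}, and then use linearity of the inner product.

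First, fix an extreme point $\bm e_i^{(j)}\in\mathcal{E}(\Lambda_i^{(j)})$. Under the standing hypothesis (disjoint subsets or fixed scenarios), we need a representation $\bm e_i^{(j)}=\sum_{\ell=1}^{N_i}\pi_{k_\ell,i}^{(j-1,j)}\bm e_{k_\ell}^{(j-1)}$ with $\bm e_{k_\ell}^{(j-1)}\in\Lambda^{(j-1)}_{k_\ell}$. Lemma \ref{lemma:suff_cond} supplies points $\bm e_{k_\ell}^{(j-1)}\in\Lambda_{k_\ell}^{(j-1)}$ with this property. The ``only if'' direction of Proposition \ref{prop:extremepoints_omegaij} then shows that each $\bm e_{k_\ell}^{(j-1)}$ is an extreme point of $\Lambda_{k_\ell}^{(j-1)}$. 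Hence the right-hand side of the claim consists of genuine left-hand sides of level-$(j-1)$ optimality cuts \eqref{eq:optimality_cuts_omega_i_j}. The converse direction is also covered: if we start from extreme points at level $j-1$, the same proposition shows that their weighted combination is an extreme point at level $j$.

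Second, we rewrite each level-$(j-1)$ term in the coordinates of $\Omega_i^{(j)}$. With the padding convention, $\bm e_{k_\ell}^{(j-1)}$ vanishes on the blocks of scenarios not in $\Omega_{k_\ell}^{(j-1)}$. On the blocks of scenarios that are in $\Omega_{k_\ell}^{(j-1)}$, the entries of $\bm h_{k_\ell}^{(j-1)}-\bm T_{k_\ell}^{(j-1)}\bm x$ coincide with $\bm h^s-\bm T^s\bm x$, that is, with the corresponding blocks of $\bm h_i^{(j)}-\bm T_i^{(j)}\bm x$. Hence
\[
(\bm h_{k_\ell}^{(j-1)}-\bm T_{k_\ell}^{(j-1)}\bm x)^\top\bm e_{k_\ell}^{(j-1)}=(\bm h_i^{(j)}-\bm T_i^{(j)}\bm x)^\top\bm e_{k_\ell}^{(j-1)},
\]
which is the identity already used in \eqref{prop_proof_eq2}. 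Summing with the weights $\pi_{k_\ell,i}^{(j-1,j)}$ and pulling the common vector out by bilinearity gives $(\bm h_i^{(j)}-\bm T_i^{(j)}\bm x)^\top\sum_\ell\pi_{k_\ell,i}^{(j-1,j)}\bm e_{k_\ell}^{(j-1)}=(\bm h_i^{(j)}-\bm T_i^{(j)}\bm x)^\top\bm e_i^{(j)}$, which is the claim. The identity holds for every $\bm x$, so no optimality assumption is needed. Evaluating it at a fixed $\bm x$ recovers the objective-value relation $z_i^{(j)}=\sum_\ell\pi_{k_\ell,i}^{(j-1,j)}z_{k_\ell}^{(j-1)}$ from Proposition \ref{prop:extremepoints_omegaij}, which serves as a consistency check.

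The main obstacle is the block-matching step in the fixed-scenario case. There the fixed scenarios appear in every $\Omega_{k_\ell}^{(j-1)}$, so their blocks are counted in several padded terms. One must check that the coordinate identification in $\mathbb{R}^{p\cdot|\Omega_i^{(j)}|}$ is the one that makes Lemma \ref{lemma:suff_cond} and Proposition \ref{prop:extremepoints_omegaij} hold. That is, the fixed-scenario block of $\bm e_i^{(j)}$ must equal the weighted sum of the fixed-scenario blocks of the $\bm e_{k_\ell}^{(j-1)}$, and the corresponding block of $\bm h-\bm T\bm x$ must be the same $\bm h^s-\bm T^s\bm x$ in every term. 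I would verify this directly against the constructions in Appendices \ref{sec_app_A4}--\ref{sec_app_A5}. The disjoint case is immediate, since every block appears in exactly one term.
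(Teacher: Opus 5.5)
Your proof is correct and follows the paper's own argument. You decompose $\bm e_i^{(j)}$ into level-$(j-1)$ extreme points, pull $\bm h_i^{(j)}-\bm T_i^{(j)}\bm x$ through the weighted sum by linearity, and use the zero-padding to replace it by $\bm h_{k_\ell}^{(j-1)}-\bm T_{k_\ell}^{(j-1)}\bm x$ in each term. You are more precise than the paper, which cites only Proposition \ref{prop:extremepoints_omegaij}: existence of the decomposition comes from Lemma \ref{lemma:suff_cond}, and extremality of the pieces from its ``only if'' half.

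The fixed-scenario block matching you worry about is automatic, since the sum is taken block-wise in $\mathbb{R}^{p\cdot|\Omega_i^{(j)}|}$. Drop the aside that the converse is ``also covered''. It is not needed for this statement, and with fixed scenarios a positively weighted combination of level-$(j-1)$ extreme points whose fixed-scenario blocks differ is not an extreme point of the product $\Lambda_i^{(j)}$.
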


\begin{proof}
Let $\bm e_{i}^{(j)}\in\mathcal{E}(\Lambda_{i}^{(j)})$ be an extreme point at level $j$ of the refinement chain. Thanks to Proposition \ref{prop:extremepoints_omegaij}, $\bm e_{i}^{(j)}$ can be decomposed as a convex combination of extreme points $\{\bm e_{k_\ell}^{(j-1)}\}_{\ell=1}^{N_i}$ at level $j-1$ with weights $\{\pi_{k_\ell,i}^{(j-1,j)}\}_{\ell=1}^{N_i}$. Thus, the left-hand side of the optimality cut \eqref{eq:optimality_cuts_omega_i_j} at level $j$ can be re-written as:
\begin{equation*}
\begin{split}
    (\bm h_i^{(j)}-\bm T_{i}^{(j)}\bm x)^\top \bm e_{i}^{(j)} &= (\bm h_i^{(j)}-\bm T_{i}^{(j)} \bm x)^\top \sum_{\ell=1}^{N_i}\pi_{k_\ell,i}^{(j-1,j)} \bm e_{k_\ell}^{(j-1)} \\
    &= \sum_{\ell=1}^{N_i} \pi_{k_\ell,i}^{(j-1,j)} [(\bm h_i^{(j)}- \bm T_{i}^{(j)} \bm x)^\top \bm e_{k_\ell}^{(j-1)}].
\end{split}
\end{equation*}
Since each $\bm e_{k_{\ell}}^{(j-1)}$ has zero components for all scenarios not contained in $\Omega_{i}^{(j)}$, the scalar product reduces to the terms associated with scenarios in $\Omega_{i}^{(j)}$, while all remaining terms vanish. Thus, the retained terms are the ones related to $\bm h_{k_{\ell}}^{(j-1)}$ and $\bm T_{k_{\ell}}^{(j-1)}$. Thus:
\begin{equation*}
    (\bm h_i^{(j)}-\bm T_{i}^{(j)} \bm x)^\top \bm e_{i}^{(j)} = \sum_{\ell=1}^{N_i} \pi_{k_\ell,i}^{(j-1,j)} [(\bm h_{k_{\ell}}^{(j-1)}-\bm T_{k_{\ell}}^{(j-1)} \bm x)^\top \bm e_{k_\ell}^{(j-1)}].
\end{equation*}
This concludes the proof.
\end{proof}

When considering the extreme rays and the corresponding feasibility cuts across consecutive levels, results analogous to those stated in Propositions \ref{prop:extremepoints_omegaij}--\ref{prop:optimal_extreme_points} and \ref{prop:optimality_cuts_prop}--\ref{prop:optimality_cuts_prop_j_jminus1} do not hold. Indeed, the extreme rays and feasibility cuts remain identical at each refinement level to those of the classical L-shaped method. This is formally established in the following result.

\begin{proposition} \label{prop:feasibility_cuts_prop}
Let $\bm r\in\mathbb{R}^p$ be an extreme ray of $\Lambda^s$, where $s:\bm\omega^s\in\Omega_i^{(j)}$, and let
\eqref{eq:feasibility_cuts} denote the associated set of feasibility cuts. Then, the extreme rays of $\Lambda_{i}^{(j)}$ take the following form: 
\begin{equation}\label{eq_extreme_ray_s_ij}
    \bm r^s:=(\bm b^s\otimes \bm I_p)\bm r,
\end{equation}
where $\bm b^s$ is the $s$-th standard basis vector of $\mathbb{R}^{|\Omega_{i}^{(j)}|}$, $\bm I_p$ is the $p\times p$ identity matrix, and $\otimes$ denotes the Kronecker product. Moreover, the feasibility cuts \eqref{eq:feasibility_cuts_omega_i_j} associated with $\bm r^s$ coincide with those in \eqref{eq:feasibility_cuts}.
\end{proposition}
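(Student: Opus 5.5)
The plan is to exploit the product structure \eqref{eq:feasible_region_dual_omega_i_j}: $\Lambda^{(j)}_i$ is the Cartesian product of the polyhedra $(\Lambda^{(j)}_i)^s$, so its recession cone is the product of their recession cones. I would then identify the extreme rays of a product cone, relate each factor to the classical region $\Lambda^s$, and finally substitute into \eqref{eq:feasibility_cuts_omega_i_j}.

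\textbf{Step 1 (recession cone).} Since $(\Lambda^{(j)}_i)^s=\{\bm\lambda^s:\bm W^\top\bm\lambda^s\le p^s_{\Omega_i^{(j)}}\bm q^s\}$, its recession cone is $C:=\{\bm\lambda:\bm W^\top\bm\lambda\le\bm 0\}$. This cone is independent of both $s$ and the scaling $p^s_{\Omega_i^{(j)}}$, and it coincides with the recession cone of $\Lambda^s$. Hence $\mathrm{rec}(\Lambda^{(j)}_i)=C^{|\Omega_i^{(j)}|}$, the product of identical copies of $C$, one per scenario block.

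\textbf{Step 2 (extreme rays of a product cone).} I would show that the extreme rays of $C^{|\Omega_i^{(j)}|}$ are exactly the vectors supported on a single block, with that block an extreme ray of $C$. That is, they are $(\bm b^s\otimes\bm I_p)\bm r$ with $\bm r\in\mathcal R(\Lambda^s)$.

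For one direction, take a ray $\bm d=(\bm d^s)_s$ with at least two nonzero blocks. It decomposes as $\bm d=\sum_s(\bm b^s\otimes\bm I_p)\bm d^s$, and each summand lies in the cone. This is a sum of non-proportional cone elements, so $\bm d$ is not extreme. Likewise, if the only nonzero block $\bm d^s$ is not extreme in $C$, any nontrivial decomposition of $\bm d^s$ within $C$ lifts blockwise to a decomposition of $\bm d$.

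For the converse, let $\bm d=(\bm b^s\otimes\bm I_p)\bm r$ with $\bm r$ extreme in $C$, and suppose $\bm d=\bm u+\bm v$ with $\bm u,\bm v$ in the product cone. For every block $t\ne s$ we get $\bm u^t+\bm v^t=\bm 0$ with $\bm u^t,\bm v^t\in C$.

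The main obstacle is concluding $\bm u^t=\bm v^t=\bm 0$ from this. It requires $C$ to be pointed, and that is not automatic: if $\bm W$ lacks full row rank, $C$ contains the lineality space $\ker\bm W^\top$. I would handle this by adopting the standard convention implicit in the paper's use of extreme rays (as in the classical L-shaped method), namely that $\Lambda^s$ is pointed, equivalently that $\bm W$ has full row rank. Under that convention $\bm u^t=\bm v^t=\bm 0$ for $t\neq s$. Extremality of $\bm r$ in $C$ then forces $\bm u^s,\bm v^s$ to be proportional to $\bm r$. So $\bm d$ is extreme, which gives \eqref{eq_extreme_ray_s_ij}.

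\textbf{Step 3 (feasibility cuts).} Substitute $\bm r^s$ into \eqref{eq:feasibility_cuts_omega_i_j}. Because $\bm h^{(j)}_i-\bm T^{(j)}_i\bm x$ stacks the blocks $\bm h^t-\bm T^t\bm x$, we get
\begin{equation*}
(\bm h^{(j)}_i-\bm T^{(j)}_i\bm x)^\top(\bm b^s\otimes\bm I_p)\bm r=(\bm h^s-\bm T^s\bm x)^\top\bm r\le 0 .
\end{equation*}
This is exactly the cut \eqref{eq:feasibility_cut}. Letting $s$ range over $\Omega_i^{(j)}$ and $i$ over $1,\ldots,m_j$, property (R1) ensures every scenario is covered. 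Hence the family of cuts \eqref{eq:feasibility_cuts_omega_i_j} coincides with \eqref{eq:feasibility_cuts}.
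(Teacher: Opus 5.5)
Your proposal is correct and follows essentially the same route as the paper (the recession cone $\{\bm\lambda:\bm W^\top\bm\lambda\le\bm 0\}$ is unaffected by the positive rescaling $p^s_{\Omega_i^{(j)}}\bm q^s$, the Cartesian-product structure of $\Lambda_i^{(j)}$ isolates one scenario block, and substituting into the cut removes all other blocks). It is also more complete: you prove that every extreme ray of $\Lambda_i^{(j)}$ is supported on a single block, whereas the paper only asserts this from the product structure.

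The pointedness ``convention'' in Step 2 does not need to be imposed as an extra assumption. If $C=\{\bm\lambda:\bm W^\top\bm\lambda\le\bm 0\}$ contained some $\bm l\neq\bm 0$ with $-\bm l\in C$, then the decomposition $\bm r=(\bm r+\bm l)+(-\bm l)$ together with extremality would force $\bm r\in C\cap(-C)$, which is impossible for an extreme ray. So the hypothesis that $\bm r$ exists already gives $\bm u^t=\bm v^t=\bm 0$ for $t\neq s$.
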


\begin{proof}
First, observe that for each scenario $\bm\omega^s\in\Omega_i^{(j)}$, the dual feasible regions:
\begin{equation*}
    \Lambda^s=
    \left\{
        \bm\lambda\in\mathbb{R}^p:
        \bm W^\top\bm\lambda\leq\bm q^s
    \right\},
\end{equation*}
and
\begin{equation*}
    (\Lambda_i^{(j)})^s=
    \left\{
        \bm\lambda^s\in\mathbb{R}^p:
        \bm W^\top\bm\lambda^s
        \leq
        p^s_{\Omega_i^{(j)}}\bm q^s,
    \right\}
\end{equation*}
differ only by the positive scalar factor $p^s_{\Omega_i^{(j)}}$ multiplying the right-hand side $\bm q^s$. Therefore, they have the same recession cone, implying that every extreme ray $\bm r$ of $\Lambda^s$ is also an extreme ray of $(\Lambda_i^{(j)})^s$.

Next, by definition of $\bm r^s$ in \eqref{eq_extreme_ray_s_ij}:
\begin{equation*}
\bm r^s
=
\begin{bmatrix}
\bm 0 &
\cdots &
\bm 0 &
\bm r &
\bm 0 &
\cdots &
\bm 0
\end{bmatrix}^{\!\top},
\end{equation*}
that is, the block corresponding to scenario $s$ coincides with $\bm r$, while all remaining blocks are zero. Since $\Lambda_i^{(j)}$ is the Cartesian product of the feasible regions $(\Lambda_i^{(j)})^s$ (see \eqref{eq:feasible_region_dual_omega_i_j}), it follows that $\bm r^s$ is an extreme ray of $\Lambda_i^{(j)}$.

Finally, substituting $\bm r^s$ into the feasibility cut \eqref{eq:feasibility_cuts_omega_i_j}, all terms vanish except the one associated with scenario $\bm\omega^s$. Hence, the resulting inequality coincides exactly with the feasibility cut \eqref{eq:feasibility_cuts}. This completes the proof.
\end{proof}

We conclude this section by establishing the main convergence result for the proposed L-shaped refinement chain cuts method, building on the theoretical developments presented thus far.

\begin{theorem} \label{teo_convergence}
Consider a refinement chain composed by either disjoint subsets or with fixed scenarios appearing in all of them. For each level $j=1,\ldots,J$ of the refinement chain, the Benders master problem \eqref{eq:master_problem_omega_i_j} at level $j$ converges to the optimal solution of the original two-stage stochastic program \eqref{eq:DEP}.
\end{theorem}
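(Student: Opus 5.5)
The plan is to prove the theorem in two parts. First, the level-$j$ master problem \eqref{eq:master_problem_omega_i_j} (with all cuts enumerated) has the same optimal value and the same optimal first-stage solutions as \eqref{eq:DEP}. Second, the iterative relaxation that generates cuts \eqref{eq:feasibility_cuts_omega_i_j}--\eqref{eq:optimality_cuts_omega_i_j} on demand terminates finitely. The argument follows the classical L-shaped convergence proof, with the level-$j$ subgroup subproblems in place of the single-scenario ones.

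\textbf{Step 1 (exact reformulation at level $j$).} For fixed $\bm x\in\mathcal{X}$, LP duality applied to \eqref{eq:scenario_subproblem_omega_i_j}--\eqref{eq:scenario_subproblem_DUAL_omega_i_j} gives the following. The primal is feasible if and only if $(\bm h^{(j)}_i-\bm T^{(j)}_i\bm x)^\top\bm r\le 0$ for all $\bm r\in\mathcal{R}(\Lambda^{(j)}_i)$. In that case, provided $\Lambda^{(j)}_i\neq\emptyset$, we have $\mathcal{Q}(\bm x,\bm\xi^{(j)}_i)=\max_{\bm e\in\mathcal{E}(\Lambda^{(j)}_i)}(\bm h^{(j)}_i-\bm T^{(j)}_i\bm x)^\top\bm e$. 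Nonemptiness holds because $\Lambda^{(j)}_i$ is a product of scaled copies of $\Lambda^s$, and scaling by $p^s_{\Omega_i^{(j)}}>0$ preserves nonemptiness and pointedness.

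By Proposition \ref{prop:feasibility_cuts_prop}, the feasibility cuts at level $j$ are exactly the scenario feasibility cuts \eqref{eq:feasibility_cuts}. So the $\bm x$-feasible set of \eqref{eq:master_problem_omega_i_j} coincides with that of \eqref{eq:master_problem}, that is, with the set of $\bm x\in\mathcal{X}$ for which every recourse problem \eqref{eq:second_stage_problem} is feasible. For such $\bm x$, each optimal $\theta^{(j)}_i$ equals $\mathcal{Q}(\bm x,\bm\xi^{(j)}_i)$.

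It then remains to show that $\sum_i\pi^{(j)}_i\mathcal{Q}(\bm x,\bm\xi^{(j)}_i)=\sum_s p^s\mathcal{Q}(\bm x,\bm\xi^s)$. The subgroup problem \eqref{eq:scenario_subproblem_omega_i_j} is separable in $s$, so $\mathcal{Q}(\bm x,\bm\xi^{(j)}_i)=\sum_{s\in\Omega^{(j)}_i}p^s_{\Omega^{(j)}_i}\mathcal{Q}(\bm x,\bm\xi^s)$. Property (P2), namely $\mathbb{P}=\sum_i\pi_i^{(j)}\mathbb{P}_i^{(j)}$ evaluated at each atom, gives $\sum_i\pi^{(j)}_i p^s_{\Omega^{(j)}_i}=p^s$. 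Exchanging the sums yields the identity. Alternatively, one can chain Proposition \ref{prop:optimality_cuts_prop_j_jminus1} and Lemma \ref{lemma:suff_cond} from level $1$, which by Proposition \ref{prop:multicut} is the multi-cut problem \eqref{eq:master_problem}, upward using \eqref{eq:relation_phi_f_disjoint}. I would use the direct separability argument and mention the chain as a consistency check. The result is that the objective of \eqref{eq:master_problem_omega_i_j}, minimized over $\theta$, equals the objective of \eqref{eq:DEP} minimized over $\bm y$. Hence the optimal values and the optimal $\bm x$ coincide.

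\textbf{Step 2 (finite convergence of the cutting-plane scheme).} At each iteration the relaxed master yields $(\overline{\bm x},\overline{\bm\theta})$, and the $m_j$ dual subproblems are solved at $\overline{\bm x}$. If some dual subproblem is unbounded, a violated extreme-ray cut is added. Otherwise an optimal extreme point is obtained; if its cut is violated, that cut is added. If no cut is violated, then $\overline{\theta}^{(j)}_i\ge\mathcal{Q}(\overline{\bm x},\bm\xi^{(j)}_i)$ for all $i$. In that case the relaxation's lower bound equals the objective of the feasible solution $\overline{\bm x}$, which is therefore optimal by Step 1.

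Each added cut is new, since it is violated by the current point and so was not previously present. The sets $\mathcal{E}(\Lambda^{(j)}_i)$ and $\mathcal{R}(\Lambda^{(j)}_i)$ are finite because they are polyhedral. Hence the process terminates after finitely many iterations. If $\mathcal{X}$ contains integrality, the argument is unchanged, provided the master is solved to optimality at each iteration; the finiteness comes from the cuts, not from $\mathcal{X}$.

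\textbf{Main obstacle.} The delicate point is the value identity in Step 1 for the fixed-scenario construction. There the subsets overlap, so a fixed scenario is counted in every $\Omega^{(j)}_i$. I must check that $\sum_i\pi_i^{(j)}p^s_{\Omega_i^{(j)}}=p^s$ really holds with the weights \eqref{eq:prob_scenario_ffixed}--\eqref{eq:weights_scenario_ffixed}. Since $\sum_i\pi_i^{(j)}=1$, this works for $s\le f$ and works for the nonfixed scenarios by cancellation of the normalizing constants. I would verify this explicitly, relying on (P2) as the guarantee. A second minor point is ensuring that the subgroup optimality cuts are generated only when all scenarios in $\Omega^{(j)}_i$ are feasible, which mirrors the remark after \eqref{eq:master_problem_singlecut}.
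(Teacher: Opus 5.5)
Your proof is correct, but it takes a different route from the paper's.

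\textbf{The paper's route.} The paper argues by induction along the chain. Level $1$ is the multi-cut master (Proposition \ref{prop:multicut}), so the classical multi-cut convergence result applies there. Convergence is then transferred from level $j-1$ to level $j$ using three results:
\begin{itemize}
\item the convex-combination decomposition of optimality cuts (Proposition \ref{prop:optimality_cuts_prop_j_jminus1});
\item the correspondence between optimal dual extreme points (Proposition \ref{prop:optimal_extreme_points});
\item the invariance of feasibility cuts (Proposition \ref{prop:feasibility_cuts_prop}).
\end{itemize}

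\textbf{Your route.} You work at a single fixed level $j$. LP duality and separability of \eqref{eq:scenario_subproblem_omega_i_j} give $\mathcal{Q}(\bm x,\bm\xi_i^{(j)})=\sum_{s:\bm\omega^s\in\Omega_i^{(j)}}p^s_{\Omega_i^{(j)}}\mathcal{Q}(\bm x,\bm\xi^s)$. The atom-wise form of (P2) then turns $\sum_i\pi_i^{(j)}\mathcal{Q}(\bm x,\bm\xi_i^{(j)})$ into the expected recourse. A standard finite cutting-plane argument finishes.

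\textbf{What your route buys.} First, rigor. It makes explicit two facts that the paper's step ``convergence at level 2 follows from convergence at level 1'' leaves implicit: the expected recourse value is identical at every level, and the relaxation stops with $LB=UB$ at an optimal $\overline{\bm x}$. Second, generality. You only use (R1), (P1)--(P2) and the product structure of $\Lambda_i^{(j)}$. So your argument never needs the disjoint/fixed-scenario hypothesis, and it covers any chain with those properties. The paper needs that hypothesis only to access Lemma \ref{lemma:suff_cond} and Propositions \ref{prop:extremepoints_omegaij}--\ref{prop:optimal_extreme_points}.

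\textbf{What the paper's route buys.} It ties the theorem directly to the inter-level machinery (cut aggregation and the optimal-extreme-point bijection). Algorithm \ref{algo:refchaincuts_algo} and Proposition \ref{prop:convergence_algorithm} later exploit exactly that machinery.

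\textbf{One detail to add in Step 2.} Before any optimality cut for $\Omega_i^{(j)}$ exists, $\theta_i^{(j)}$ is unbounded below in the relaxed master. You need the usual device so that $(\overline{\bm x},\overline{\bm\theta})$ is well defined: either impose a valid lower bound on $\theta_i^{(j)}$, or ignore $\theta_i^{(j)}$ until its first cut is generated.
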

\begin{proof}
According to Proposition \ref{prop:multicut}, the classical Benders multi-cut formulation can be regarded as a special case of the proposed framework, corresponding to level 1 of the refinement chain, where all subgroups are disjoint. In the case of fixed scenarios, a lower level composed of disjoint subgroups can also be introduced. By \cite{BirLou1988}, the multi-cut formulation \eqref{eq:master_problem} converges to the optimal solution of the original two-stage stochastic program \eqref{eq:DEP}.

Consider now the Benders master problem at level 2. By Proposition \ref{prop:optimality_cuts_prop_j_jminus1}, the optimality cuts at level 2 can be decomposed as convex combinations of optimality cuts obtained at level 1, i.e., the cuts characterizing the multi-cut formulation. Moreover, Proposition \ref{prop:optimal_extreme_points} establishes a one-to-one correspondence between the sets of optimal dual extreme points associated with levels 1 and 2. Finally, Proposition \ref{prop:feasibility_cuts_prop} ensures that directions of unboundedness are preserved between the two levels, so that feasibility cuts are also guaranteed. Therefore, convergence of the Benders master problem at level 2 follows from convergence at level 1.

The same argument can then be applied iteratively between any two consecutive levels of the refinement chain. Hence, for each level $j=1,\ldots,J$, the Benders master problem \eqref{eq:master_problem_omega_i_j} converges to the optimal solution of \eqref{eq:DEP}.
\end{proof}

Note that the convergence result of Theorem \ref{teo_convergence} can also be established by proceeding in the opposite direction along the refinement chain. In particular, starting from the single-cut formulation at the coarsest refinement level $j=J$, the relationships established between consecutive levels can be applied recursively up to the finest level $j=1$, yielding the same convergence result.

In Appendix \ref{sec_app_B}, we provide a numerical example illustrating the theoretical results developed thus far.

\section{An L-shaped refinement-based solution algorithm} \label{sec_implementation}

In this section, we present an algorithmic implementation of the L-shaped refinement chain cuts method between two consecutive levels $j-1$ and $j$ of the refinement chain, with $j\in\{2,\ldots,J\}$. The proposed approach considers a Benders master problem defined at level $j$ and iteratively strengthens it with feasibility and aggregated optimality cuts generated from the subproblems associated with level $j-1$. The overall procedure is summarized in Algorithm \ref{algo:refchaincuts_algo}.

As input, we consider a two-stage stochastic program of the form \eqref{eq:DEP} together with a refinement chain \eqref{eq:refinement_chain_omega} based on either disjoint or fixed scenarios. As in the classical L-shaped method, the explicit enumeration of all extreme points and extreme rays of the dual feasible regions $\Lambda_{i}^{(j)}$, for all $i=1,\ldots,m_j$, is computationally intractable. Therefore, the proposed methodology relies on an iterative relaxation procedure \cite{Geoffrion1970_b,Geoffrion1970_a}. For this reason, we introduce an absolute tolerance $\varepsilon_a$, a relative tolerance $\varepsilon_r$, and a maximum CPU time limit $T_{\max}$ for the stopping criteria. At termination, provided that the time limit has not been reached, the algorithm returns a solution that is optimal up to an absolute gap $\varepsilon_a$ or a relative gap $\varepsilon_r$, as guaranteed by Proposition \ref{prop:convergence_algorithm} at the end of this section. To evaluate the convergence of the procedure, we iteratively update a Lower Bound ($LB$) and an Upper Bound ($UB$) on the optimal objective function value of the two-stage stochastic program. These bounds are then used to compute the absolute and relative optimality gaps that define the stopping criteria.

The algorithm starts by initializing $LB$ and $UB$ to $-\infty$ and $+\infty$, respectively (line \ref{alg_ln1}). Similarly, the absolute and relative gaps, denoted by $gap_a$ and $gap_r$, are initialized to $+\infty$ (line \ref{alg_ln2}). The procedure then enters an iterative loop (lines \ref{alg_ln3}--\ref{alg_ln25}), which constitutes the core iterative structure of the algorithm and continues until one of the stopping criteria is satisfied. At each iteration, the Benders master problem \eqref{eq:DEP_ref_omega_i_j} defined at level $j$ is solved, yielding a first-stage solution $\bm x^{(j)}$ and an objective function value $z^{(j)}$ (line \ref{alg_ln4}). The lower bound is then updated in line \ref{alg_ln5}. Subsequently, the first-stage cost is computed as $obj_1=\bm c^\top \bm x^{(j)}$, while the second-stage cost $obj_2$ is initialized to zero (line \ref{alg_ln6}).

\begin{algorithm}[H] 
\resizebox{\textwidth}{!}{%
\begin{minipage}{\textwidth}
\caption{Algorithmic implementation of the L-shaped refinement chain cuts method between consecutive levels $j-1$ and $j$} \label{algo:refchaincuts_algo}
\textbf{Input:} Two-stage stochastic program \eqref{eq:DEP}, refinement chain \eqref{eq:refinement_chain_omega},\\ consecutive refinement levels $j-1$ and $j$, absolute tolerance $\varepsilon_{a}$, \\ relative tolerance $\varepsilon_{r}$, time limit $T_{\max}$\newline
\textbf{Output:} An optimal solution of the two-stage stochastic program
\begin{algorithmic}[1]
\State $LB \gets -\infty$, $UB \gets +\infty$ \label{alg_ln1} \Comment{\footnotesize Initialize lower and upper bounds} \normalsize
\State $gap_a \gets +\infty$, $gap_r \gets +\infty$ \label{alg_ln2} \Comment{\footnotesize Initialize absolute and relative gaps} \normalsize

\While{($gap_a>\varepsilon_a$ or $gap_r>\varepsilon_r$) and CPU time $<T_{\max}$} \label{alg_ln3}
\State Solve Benders master problem \eqref{eq:DEP_ref_omega_i_j} at level $j$, with first-stage solution $\bm x^{(j)}$ and \label{alg_ln4}
\Statex \hspace*{0.4cm} objective function value $z^{(j)}$
\State $LB \gets \max\{LB,z^{(j)}\}$ \label{alg_ln5} \Comment{\footnotesize Update the lower bound} \normalsize
\State $obj_1 \gets \bm c^\top \bm x^{(j)}$, $obj_2 \gets 0$ \label{alg_ln6} \Comment{\footnotesize First-stage and second-stage costs} \normalsize
\For{$k=1,\ldots,m_{j-1}$} \label{alg_ln7} \Comment{\footnotesize Loop at level $j-1$} \normalsize
\State Solve dual subproblem \eqref{eq:scenario_subproblem_DUAL_omega_i_j} $\mathcal{Q}_{dual}({\bm x}^{(j)},\bm \xi_{k}^{(j-1)})$ at level $j-1$, with objective \label{alg_ln8}
\Statex \hspace*{0.95cm} function value $obj_k^{(j-1)}$
\If{$\mathcal{Q}_{dual}({\bm x}^{(j)},\bm \xi_{k}^{(j-1)})$ is unbounded} \label{alg_ln9}
\State Obtain a dual extreme ray $\bm r_k^{(j-1)}\in\mathcal{R}(\Lambda_{k}^{(j-1)})$ \label{alg_ln10}
\State Generate a feasibility cut \eqref{eq:feasibility_cuts_omega_i_j} $(\bm h_{k}^{(j-1)}-\bm T_{k}^{(j-1)}\bm x)^\top \bm r_k^{(j-1)} \leq 0$ \label{alg_ln11}
\Else \label{alg_ln12}
\State Obtain a dual extreme point $\bm e_k^{(j-1)}\in\mathcal{E}(\Lambda_{k}^{(j-1)})$ \label{alg_ln13}
\State Generate an optimality cut \eqref{eq:optimality_cuts_omega_i_j} $(\bm h_{k}^{(j-1)}-\bm T_{k}^{(j-1)}\bm x)^\top \bm e_k^{(j-1)} \leq \theta_{k}^{(j-1)}$ \label{alg_ln14}
\EndIf \label{alg_ln15}
\State $obj_2 \gets obj_2+\pi_k^{(j-1)}obj_k^{(j-1)}$ \label{alg_ln16}
\EndFor \label{alg_ln17}
\State $UB \gets \min\{UB,obj_1+obj_2\}$ \label{alg_ln18} \Comment{\footnotesize Update the upper bound} \normalsize
\State Add the feasibility cuts to the Benders master problem \eqref{eq:DEP_ref_omega_i_j} at level $j$ \label{alg_ln19}
\For{$i=1,\ldots,m_j$} \label{alg_ln20} \Comment{\footnotesize Loop at level $j$} \normalsize
\State Using \eqref{eq:optimality_cuts_prop} aggregate the optimality cuts for each $k=1,\ldots,m_{j-1}$ such that \label{alg_ln21}
\Statex \hspace*{0.95cm} $\Omega_{i}^{(j)} \supseteq \Omega_{k}^{(j-1)}$ and $\mathcal{Q}_{dual}({\bm x}^{(j)},\bm \xi_{k}^{(j-1)})$ is feasible 
\State Add the aggregated optimality cuts to the Benders master problem \eqref{eq:DEP_ref_omega_i_j} at \label{alg_ln22}
\Statex \hspace*{0.95cm} level $j$ for subgroup $\Omega_i^{(j)}$
\EndFor \label{alg_ln23}
\State $gap_a \gets |UB-LB|$, $gap_r = gap_a/|UB|$ \label{alg_ln24} \Comment{\footnotesize Update the absolute and relative gaps} \normalsize
\EndWhile \label{alg_ln25}
\end{algorithmic}
\end{minipage}
}
\end{algorithm}

The algorithm then performs two consecutive iterative procedures. The first loop (lines \ref{alg_ln7}--\ref{alg_ln17}) operates at refinement level $j-1$. For each subset $\Omega_k^{(j-1)}$, with $k=1,\ldots,m_{j-1}$, the dual subproblem \eqref{eq:scenario_subproblem_DUAL_omega_i_j} $\mathcal{Q}_{dual}(\bm x^{(j)},\bm \xi_k^{(j-1)})$ is solved, obtaining an objective function value equal to $obj_k^{(j-1)}$ (line \ref{alg_ln8}). If the dual subproblem is unbounded, a dual extreme ray $\bm r_k^{(j-1)} \in \mathcal{R}(\Lambda_k^{(j-1)})$ is generated and a feasibility cut of the form \eqref{eq:feasibility_cuts_omega_i_j} is constructed (lines \ref{alg_ln9}--\ref{alg_ln11}). Otherwise, an extreme point $\bm e_k^{(j-1)} \in \mathcal{E}(\Lambda_k^{(j-1)})$ is obtained and an optimality cut of the form \eqref{eq:optimality_cuts_omega_i_j} is generated (lines \ref{alg_ln12}--\ref{alg_ln15}). The second-stage contribution is then updated using the intra-level weight $\pi_k^{(j-1)}$ associated with subset $\Omega_k^{(j-1)}$ (line \ref{alg_ln16}). In the unbounded case, the contribution is interpreted as $obj_k^{(j-1)}=+\infty$. At the end of this loop, the upper bound UB is updated as shown in line \ref{alg_ln18}. The generated feasibility cuts are then added to the Benders master problem at level $j$ (line \ref{alg_ln19}).

The second loop (lines \ref{alg_ln20}--\ref{alg_ln23}) establishes the connection between optimality cuts at levels $j-1$ and $j$ of the refinement chain. For each subset $\Omega_i^{(j)}$, with $i=1,\ldots,m_j$, the optimality cuts generated at level $j-1$ are aggregated using Proposition \ref{prop:optimality_cuts_prop}. More precisely, the aggregation is performed over all subsets $\Omega_k^{(j-1)}$ such that $\Omega_{i}^{(j)} \supseteq \Omega_{k}^{(j-1)}$, provided that the corresponding dual subproblem $\mathcal{Q}_{dual}(\bm x^{(j)},\bm \xi_k^{(j-1)})$ is feasible. The resulting aggregated optimality cuts are then added to the Benders master problem at level $j$ (lines \ref{alg_ln21}--\ref{alg_ln22}). Finally, the absolute and relative gaps are updated as reported in line \ref{alg_ln24}.

Schematically, the proposed methodology is illustrated in Figure \ref{fig:refchaincuts_flowchart}. The finite convergence of Algorithm \ref{algo:refchaincuts_algo} to an optimal solution of the two-stage stochastic program \eqref{eq:DEP} is established in the following proposition.

\begin{figure}[t!]
\centering

\resizebox{\textwidth}{!}{%

\begin{tikzpicture}[
    node distance=1.2cm and 1.7cm,
    >=Latex,
    box/.style={
        draw,
        align=center,
        minimum width=4.8cm,
        minimum height=1.1cm,
        font=\large
    },
    roundbox/.style={
        box,
        rounded corners=4pt,
        minimum width=3.4cm
    },
    decision/.style={
        draw,
        diamond,
        aspect=2.2,
        align=center,
        inner sep=1pt,
        font=\large
    },
    every path/.style={thick}
]

\node[roundbox] (start) {Start};

\node[box, below=0.6cm of start] (box1) {
Solve Benders master\\
problem \eqref{eq:DEP_ref_omega_i_j} at level $j$
};

\node[box, below=1.2cm of box1] (box2) {
Solve dual subproblem \eqref{eq:scenario_subproblem_DUAL_omega_i_j}\\
$\mathcal{Q}_{dual}(\bm x^{(j)},\bm \xi_k^{(j)})$ at level $j-1$\\
for each $k=1,\ldots,m_{j-1}$
};

\node[box, below=0.6cm of box2] (box3) {
Update $LB$ and $UB$ for\\
the stopping criteria
};

\node[box, below=0.6cm of box3] (box4) {
Generate the optimality and\\
feasibility cuts at level $j-1$
};

\node[box, below=0.6cm of box4] (box5) {
Aggregate the optimality cuts from\\
level $j-1$ to level $j$ using \eqref{eq:optimality_cuts_prop}
};

\node[decision, below=0.6cm of box5] (test) {
Stopping criteria\\
satisfied?
};

\node[roundbox, right=1.3cm of test, minimum width=5.0cm] (report) {
Report optimal\\
solution $\bm x^{(j)},z^{(j)}$
};

\node[box, left=1.2cm of box3] (box6) {
Add the feasibility and aggregated\\
optimality cuts to the Benders\\
master problem \eqref{eq:DEP_ref_omega_i_j} at level $j$
};

\draw[->] (start) -- (box1);

\draw[->] (box1) -- node[right, align=left] {
optimal first-stage $\bm x^{(j)}$\\
optimal objective value $z^{(j)}$
} (box2);

\draw[->] (box2) -- (box3);
\draw[->] (box3) -- (box4);
\draw[->] (box4) -- (box5);
\draw[->] (box5) -- (test);

\draw[->] (test) -- node[midway, above] {Yes} (report.west);

\draw[->] (test.west)
    -| node[pos=0.25, above] {No}
    (box6.south);

\draw[->] (box6.north) |- (box1.west);

\end{tikzpicture}

} 

\caption{Overview of the proposed refinement-based solution Algorithm \ref{algo:refchaincuts_algo}.} \label{fig:refchaincuts_flowchart}

\end{figure}

\begin{proposition} \label{prop:convergence_algorithm}
If $\varepsilon_a=\varepsilon_r=0$ and $T_{\max}=\infty$, Algorithm \ref{algo:refchaincuts_algo} terminates after finitely many iterations and returns an optimal solution of the original two-stage stochastic program \eqref{eq:DEP}.
\end{proposition}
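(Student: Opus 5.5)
The argument follows the standard finite-convergence template for the L-shaped method, adapted to cuts that are aggregated from level $j-1$ into level $j$. It has three parts.

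\textbf{Validity of the bounds.} First I would show that $LB\le z^*\le UB$ holds at every iteration, where $z^*$ is the optimal value of \eqref{eq:DEP}.

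For $LB$, every cut added to the level-$j$ master problem is valid for Problem \eqref{eq:master_problem_omega_i_j}:
\begin{itemize}
\item feasibility cuts generated at level $j-1$ coincide with classical feasibility cuts \eqref{eq:feasibility_cuts}, by Proposition \ref{prop:feasibility_cuts_prop};
\item aggregated optimality cuts are valid optimality cuts at level $j$, by Proposition \ref{prop:optimality_cuts_prop}.
\end{itemize}
Hence the current master problem is a relaxation of \eqref{eq:master_problem_omega_i_j}. By Theorem \ref{teo_convergence} and strong LP duality, \eqref{eq:master_problem_omega_i_j} has optimal value $z^*$, so $z^{(j)}\le z^*$.

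For $UB$, if all level-$(j-1)$ dual subproblems are bounded, then $\bm x^{(j)}$ is feasible. By strong duality, $obj_1+\sum_k \pi_k^{(j-1)} obj_k^{(j-1)}$ equals the true cost of $\bm x^{(j)}$, using (P2) with $\sum_k\pi^{(j-1)}_k\mathcal{Q}(\bm x,\bm\xi^{(j-1)}_k)=\sum_s p^s\mathcal{Q}(\bm x,\bm\xi^s)$. So this quantity is at least $z^*$. Otherwise $UB$ is left unchanged, since the contribution is $+\infty$.

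\textbf{Finiteness.} Each $\Lambda^{(j-1)}_k$ is a polyhedron, so it has finitely many extreme points and extreme rays. Consequently there are finitely many feasibility cuts and finitely many aggregated optimality cuts, since the aggregated cuts are indexed by tuples of extreme points over the $N_i$ children. I would then show that, unless the algorithm terminates, each iteration adds at least one cut not yet present in the master problem. Since the set of possible cuts is finite, this gives termination.

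\textbf{The main step: a repeated cut forces termination.} Suppose the master solution $(\bm x^{(j)},\theta^{(j)})$ already satisfies every cut generated at this iteration.

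If some level-$(j-1)$ subproblem were unbounded, its extreme ray would give a feasibility cut violated by $\bm x^{(j)}$. This contradicts the assumption, so all subproblems are bounded and the returned $\bm e_k^{(j-1)}$ are optimal.

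For each $i$, the aggregated cut built from these optimal extreme points has left-hand side equal to $\sum_\ell \pi^{(j-1,j)}_{k_\ell,i}\, obj^{(j-1)}_{k_\ell}$. This follows from Proposition \ref{prop:optimality_cuts_prop_j_jminus1}, and by Proposition \ref{prop:optimal_extreme_points} the aggregated point is an optimal extreme point at level $j$. Satisfaction of this cut therefore gives
\[
\theta_i^{(j)}\ge \sum_\ell \pi^{(j-1,j)}_{k_\ell,i}\, obj^{(j-1)}_{k_\ell}.
\]
Multiplying by $\pi_i^{(j)}$ and applying Lemma \ref{lemma_relation_phi_f_disjoint} and Lemma \ref{lemma_relations_intralevelwights_consecutivelevels}, summing over $i$ yields
\[
z^{(j)}\ge obj_1+\sum_k \pi^{(j-1)}_k obj_k^{(j-1)}\ge UB.
\]
Combined with $LB\le UB$, this gives $gap_a=0$, so the algorithm stops with $\bm x^{(j)}$ optimal.

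I expect the main obstacle to be the bookkeeping in this last step. Each $\Omega^{(j-1)}_k$ must be attributed to its parent $\Omega^{(j)}_i$ so that the weights telescope exactly; in the fixed-scenario case the subsets overlap, so the attribution has to be checked carefully. One must also confirm that the duplicate-cut argument uses the extreme points actually returned by the LP solver, not merely some optimal points.
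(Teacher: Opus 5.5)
Your proof is correct, but its decisive step differs from the paper's. The paper argues by contradiction: since only finitely many distinct cuts exist, from some iteration on no new cut is generated. It then asserts that the current master therefore coincides with the full level-$j$ master \eqref{eq:master_problem_omega_i_j}, and invokes Theorem \ref{teo_convergence} to obtain $LB=UB$.

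You instead prove the local property that actually forces termination. If $(\bm x^{(j)},\theta^{(j)})$ satisfies every cut generated in the current iteration, then all level-$(j-1)$ duals are bounded. Each aggregated cut then has left-hand side $\sum_\ell \pi_{k_\ell,i}^{(j-1,j)} obj_{k_\ell}^{(j-1)}$ at $\bm x^{(j)}$, by linearity (Proposition \ref{prop:optimality_cuts_prop_j_jminus1}). The weights telescope through Lemmas \ref{lemma_relation_phi_f_disjoint} and \ref{lemma_relations_intralevelwights_consecutivelevels} to give $z^{(j)}\ge obj_1+obj_2\ge UB$. Hence every non-terminal iteration adds a cut not yet in the master.

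This is the standard L-shaped argument, and it buys rigor. ``No new cut'' does not by itself mean that every extreme point of every $\Lambda_i^{(j)}$ has been cut in: extreme points that are never optimal at a trial point are never generated. So the paper's step is justified only by exactly your computation. Your route also does not need Theorem \ref{teo_convergence}. Validity of the cuts plus weak duality already gives $LB\le z^*$, and (P2) shows that $obj_1+obj_2$ is the true cost of $\bm x^{(j)}$. The paper's route buys only brevity, by reusing the level-wise theorem.

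Both obstacles you anticipate are settled by material in the paper. Appendix \ref{sec_app_A1} shows that every $\Omega_k^{(j-1)}$ has a unique parent at level $j$, also in the fixed-scenario case, so the telescoping is exact. Lines \ref{alg_ln10} and \ref{alg_ln13} of Algorithm \ref{algo:refchaincuts_algo} stipulate extreme rays and extreme points, which keeps the cut pool finite.

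Three minor remarks.
\begin{itemize}
\item Proposition \ref{prop:optimal_extreme_points} is not needed in your key step. The inequality $\theta_i^{(j)}\ge\sum_\ell\pi_{k_\ell,i}^{(j-1,j)}obj_{k_\ell}^{(j-1)}$ is all you use.
\item Your validity claim for aggregated cuts covers only aggregation over all children of $\Omega_i^{(j)}$. If line \ref{alg_ln21} is read as aggregating over just the bounded children, the aggregated point can leave $\Lambda_i^{(j)}$, since a zero block is dual feasible only if $\bm q^s\ge\bm 0$. You should state that the cut for $\Omega_i^{(j)}$ is formed only when all its children are bounded.
\item Termination can also occur with $UB$ attained at an earlier iterate while the current master solution underestimates its own recourse. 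In that case the incumbent achieving $UB$, not the current $\bm x^{(j)}$, is the solution guaranteed to be optimal.
\end{itemize}
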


\begin{proof}
Consider the Benders master problem \eqref{eq:DEP_ref_omega_i_j} at level $j$ of the refinement chain. At each iteration, Algorithm \ref{algo:refchaincuts_algo} solves a relaxation of this master problem and evaluates the corresponding dual recourse subproblems at level $j-1$. By Proposition \ref{prop:feasibility_cuts_prop}, every feasibility cut generated from a dual extreme ray at level $j-1$ is a valid feasibility cut for the original two-stage stochastic program and can therefore be added to the master problem at level $j$ without excluding any optimal solution.

Similarly, Proposition \ref{prop:optimality_cuts_prop} shows that the optimality cuts generated at level $j-1$ can be aggregated into valid optimality cuts for the master problem at level $j$. Moreover, Proposition \ref{prop:optimal_extreme_points} establishes a one-to-one correspondence between the optimal extreme points defining the recourse function at level $j$ and collections of optimal extreme points at level $j-1$. Consequently, every optimality cut required to characterize the recourse function at level $j$ can be generated through the aggregation procedure described in lines \ref{alg_ln20}--\ref{alg_ln23} of Algorithm \ref{algo:refchaincuts_algo}. Since each dual feasible region $\Lambda_k^{(j-1)}$, $k=1,\ldots,m_{j-1}$, is a polyhedron, it has finitely many extreme points and extreme rays. Therefore, only finitely many distinct feasibility and optimality cuts can be generated throughout the execution of the algorithm.

Assume by contradiction that Algorithm \ref{algo:refchaincuts_algo} does not terminate in a finite number of iterations. Since only finitely many distinct cuts exist, there must be an iteration after which no new feasibility or optimality cut is generated. At that point, all feasibility cuts and all optimality cuts necessary to describe the Benders master problem at level $j$ have already been added. Hence, the current master problem coincides with the master problem associated with level $j$.

By Theorem \ref{teo_convergence}, the Benders master problem at level $j$ converges to the optimal solution of the original two-stage stochastic program \eqref{eq:DEP}. Therefore, once all required cuts have been generated, the lower bound $LB$ provided by the master problem coincides with the upper bound $UB$ obtained by evaluating the recourse function at the current first-stage solution.

Since $\varepsilon_a=\varepsilon_r=0$, the stopping criteria are satisfied. Moreover, because $T_{\max}=\infty$, the algorithm cannot terminate due to the time limit. This contradicts the assumption that the algorithm continues indefinitely. Therefore, Algorithm \ref{algo:refchaincuts_algo} terminates after finitely many iterations and returns an optimal solution of the original two-stage stochastic program \eqref{eq:DEP}.
\end{proof}

\section{Numerical results} \label{sec_numericalresults}

In this section, we report and discuss the numerical results of the computational experiments conducted to validate the proposed methodology. To assess the effectiveness of the L-shaped refinement chain cuts approach, we apply it to a two-stage stochastic fixed-charge multicommodity network design problem. This class of problems commonly arises in several application domains, particularly in transportation and logistics, such as liner shipping \cite{ChrHelPisSacVil2020}, freight and passenger rail transportation \cite{ChoCrai2020}, logistics network design \cite{MelNicSald2009}, but also in manufacturing and telecommunications \cite{CraGenAkh2022}.

We begin in Section \ref{subsec_problemdefinition} by introducing the problem and describing its main characteristics, followed by its mathematical formulation in Section \ref{subsec_mathematical_formulation}. In Section \ref{subsec_experimental_setting} we present the details of the experimental setting considered in this study. Finally, Section \ref{subsec_results_discussion} reports and discusses the computational results.

\subsection{Problem definition} \label{subsec_problemdefinition}

We consider a directed graph $\mathcal{G}:=(\mathcal{N},\mathcal{A})$, where $\mathcal{N}$ and $\mathcal{A}$ are the set of nodes and arcs, respectively, and $\mathcal{K}$ is the set of commodities. For each node $i\in\mathcal{N}$, we define $\mathcal{N}^{+}(i)$ and $\mathcal{N}^{-}(i)$ the set of arcs  emanating from $i$ and terminating at $i$, respectively. Each commodity $k\in\mathcal{K}$ needs to be transported from an origin node $on^k\in\mathcal{N}$ to a destination node $dn^k\in\mathcal{N}$.

As in \cite{HewReiWall2021}, we suppose that the decision-maker can externally access additional resources beyond the design network. In this case, external providers are employed to transport some commodities directly from their origin to destination nodes, but at a higher unit price. No capacity constraints are imposed on these \textit{dummy} direct links. Formally, let $\mathcal{A}^A$ be the set of design arcs, and $\mathcal{A}^D$ be the set of dummy arcs associated with the commodities. For each commodity $k\in\mathcal{K}$, set $\mathcal{A}^D$ includes the direct arc from $on^k$ to $dn^k$. A fixed cost $f_{ij}\in\mathbb{R}^+$ is associated with the selection of arc $(i,j)\in\mathcal{A}^A$ in the network, while $c^k_{ij},\widetilde{c}^k_{ij}\in\mathbb{R}^+$ are the costs per unit of volume of commodity $k\in\mathcal{K}$ flowed on arc $(i,j)\in\mathcal{A}^A$ and $(i,j)\in\mathcal{A}^D$, respectively.

We assume that the capacity $u_{ij}(\omega)\in\mathbb{R}^+$ of each arc $(i,j)\in\mathcal{A}^A$ and the demand $d^k_{i}(\omega)\in\mathbb{R}^+$ of commodity $k\in\mathcal{K}$ at node $i\in\mathcal{N}$ are stochastic parameters, depending on the outcome $\omega\in\Omega$ of a random experiment. Collecting these parameters into the vectors $\bm u(\omega)$ and $\bm d(\omega)$, we denote the uncertain data by $\bm \xi(\omega)=\{\bm u(\omega),\bm d(\omega)\}$.
Let $v^{k}(\omega)$ be the volume of commodity $k\in\mathcal{K}$ that must be transported from $on^k$ to $dn^k$ once the outcome $\omega\in\Omega$ is observed. Then, the stochastic demand can be expressed as:
\begin{equation} \label{ed:stoch_network_demand}
    d^{k}_i(\omega):= \begin{cases}
        v^{k}(\omega)& \text{if } i=on^k  \\
         -v^{k}(\omega)& \text{if } i=dn^k  \\ 
         0 & \text{otherwise.}
    \end{cases}
\end{equation}

Let $x_{ij}\in\{0,1\}$ be the first-stage variable associated with arc $(i,j)\in\mathcal{A}^A$, indicating whether arc $(i,j)$ is installed in the network. In the second stage of the problem, once the uncertainty has been revealed, the network constructed at the first stage is used to flow the commodities to meet the observed demands. Let $y_{ij}^{k}(\omega)\geq 0$ be the second-stage volume of commodity $k\in\mathcal{K}$ that transits through arc $(i,j)\in\mathcal{A}^A$ if the outcome $\omega\in\Omega$ is observed. Similarly for $\widetilde{y}_{ij}^{k}(\omega)\geq 0$ for the dummy arc $(i,j)\in\mathcal{A}^D$.

The objective is to minimize the expected total cost, which includes both the fixed costs associated with the selection of the design arcs in the first stage and the operational transportation costs incurred in the second stage for routing the commodities through the designed network and, when necessary, through the external dummy arcs. To account for risk aversion, we consider a mean-risk formulation in which the second-stage cost is modeled as a convex combination of expectation and Conditional Value-at-Risk (CVaR), also known as Average Value-at-Risk (AVaR) (see, e.g., \cite{MAHMUTOGULLARI2018595,Schultz2011,Shapiro2011}).  An illustrative example of the proposed network design problem is provided in Appendix \ref{sec_app_C1}.

\newpage
\subsection{Mathematical formulation} \label{subsec_mathematical_formulation}

Extensively, the two-stage stochastic fixed-charge multicommodity network design problem can be formulated as follows:

\small
\begin{subequations} \label{eq:network_meanrisk_pb_DEP_approach1}
\begin{align}
    \min_{\bm x,\bm y, \bm{\widetilde{y}}} & \quad \sum_{(i,j)\in\mathcal{A}^A}f_{ij}x_{ij}+ (1-\beta)\text{ }\mathbb{E}_{\mathbb{P}}\bigg[\sum_{k\in\mathcal{K}} \sum_{(i,j)\in\mathcal{A}^A}c_{ij}^ky_{ij}^k(\omega)+\sum_{k\in\mathcal{K}} \sum_{(i,j)\in\mathcal{A}^D}\widetilde{c}_{ij}^k\widetilde{y}_{ij}^k(\omega) \bigg]+\nonumber\\
    & +\beta\text{ }\text{CVaR}_\alpha\bigg[\sum_{k\in\mathcal{K}} \sum_{(i,j)\in\mathcal{A}^A}c_{ij}^ky_{ij}^k(\omega)+\sum_{k\in\mathcal{K}} \sum_{(i,j)\in\mathcal{A}^D}\widetilde{c}_{ij}^k\widetilde{y}_{ij}^k(\omega) \bigg] \label{eq:network_meanrisk_objfunc_DEP_approach1}\\
    \text{s.t.} & \quad \sum_{j \in \mathcal{N}^+(i)}\left(y_{ij}^k(\omega)+\widetilde{y}_{ij}^k(\omega)\right)-\sum_{j \in \mathcal{N}^-(i)}\left(y_{ij}^k(\omega)+\widetilde{y}_{ij}^k(\omega)\right)=d_{i}^k(\omega) \hspace*{0.2cm} \forall i \in \mathcal{N},k \in \mathcal{K},\omega\in\Omega \label{eq:network_meanrisk_constr1_DEP_approach1}   \\
    & \quad \sum_{k\in\mathcal{K}} y_{ij}^k(\omega) \leq u_{ij}(\omega)x_{ij} \qquad \forall (i,j)\in\mathcal{A}^A,\omega\in\Omega \label{eq:network_meanrisk_constr2_DEP_approach1} \\
    & \quad x_{ij}\in\{0,1\} \qquad  \forall (i,j)\in\mathcal{A}^A \label{eq:network_meanrisk_constr3_DEP_approach1} \\
    & \quad y_{ij}^k(\omega) \geq 0 \qquad \forall (i,j)\in\mathcal{A}^A,k\in\mathcal{K},\omega\in\Omega \label{eq:network_meanrisk_constr4_DEP_approach1}\\
    & \quad \widetilde{y}_{ij}^k(\omega) \geq 0 \qquad \forall (i,j)\in\mathcal{A}^D,k\in\mathcal{K},\omega\in\Omega. \label{eq:network_meanrisk_constr5_DEP_approach1}
\end{align}
\end{subequations}

\normalsize
The objective function \eqref{eq:network_meanrisk_objfunc_DEP_approach1} aims to minimize the expected total cost, which consists of three components. The first term represents the fixed cost associated with the selection of the design arcs in the first stage. The second term corresponds to the expected operational cost, comprising the multicommodity flow costs over both the selected arcs and the dummy arcs, respectively. The third term accounts for the Conditional Value-at-Risk at confidence level $\alpha\in(0,1)$ of the same operational cost presented in the second term. The expectation and the CVaR are therefore defined consistently with respect to the same operational cost function, ensuring the coherence of the resulting mean-risk measure formulation \cite{DenRus2024}. The parameter $\beta\in[0,1]$ controls the trade-off between the expectation and the CVaR components of the objective function. Larger values of $\alpha$ or $\beta$ correspond to a more risk-averse decision-maker \cite{MAHMUTOGULLARI2018595}.

Constraints \eqref{eq:network_meanrisk_constr1_DEP_approach1} enforce flow conservation, ensuring that the demand \eqref{ed:stoch_network_demand} of each commodity $k\in\mathcal{K}$ is routed from its origin node $on^k$ to its destination node $dn^k$. Constraints \eqref{eq:network_meanrisk_constr2_DEP_approach1} restrict the total quantity of flow that can transit through arc $(i,j)\in\mathcal{A}^A$, without violating its capacity. Finally, constraints \eqref{eq:network_meanrisk_constr3_DEP_approach1}-\eqref{eq:network_meanrisk_constr5_DEP_approach1} define the domain of the decision variables of the problem.

In the case of a discrete set of scenarios $\mathcal{S}$, the objective function \eqref{eq:network_meanrisk_objfunc_DEP_approach1} can be expressed as a linear function \cite{RocUry2000}. The resulting formulation, together with its Benders reformulations, is reported in Appendix \ref{sec_app_C2}. Since recourse actions can always be performed through the set of dummy arcs, the problem remains feasible for every realization of the uncertainty. Consequently, only Benders optimality cuts need to be generated.

\subsection{Experimental setting} \label{subsec_experimental_setting}

We tested the proposed L-shaped refinement chain cuts method on a selection of benchmark instances from the multicommodity network design literature. In particular, we considered the Canad R-instances introduced in \cite{CRAINIC200173} and publicly available at \url{https://commalab.di.unipi.it/datasets/mmcf/#Canad}. We selected three classes of instances: R04 (small), R05 (medium), and R07 (large). The main characteristics of these instances are summarized in Table \ref{tab:instances}.

\begin{table}[h!]
\centering
\caption{Canad R-instances considered in the computational experiments.}
\label{tab:instances}

\begin{tabular}{@{}lllll@{}}
\toprule
Name & Size & Nodes $\lvert \mathcal{N} \rvert$ & Arcs $\lvert \mathcal{A}\rvert$ & Commodities $\lvert \mathcal{K}\rvert$\\
\midrule
R04 & small  & 10 & 60 & 10\\
R05 & medium & 10 & 60 & 25\\
R07 & large  & 10 & 82 & 10\\
\bottomrule
\end{tabular}

\end{table}

The scenarios associated with uncertain arc capacities and commodity demands were generated according to the methodology proposed in \cite{hoyland2003heuristic}. For the small and medium instances (R04 and R05), we considered scenario trees composed of 256 scenarios. For the large instance (R07), we considered scenario trees with 256, 1024, and 2048 scenarios.

In the construction of the refinement chain \eqref{eq:refinement_chain_omega}, we considered three different approaches. The first approach is based on a disjoint sequential partition, in which scenarios are grouped sequentially (e.g., for pairs of scenarios, groups are composed of scenarios 1--2, 3--4, and so on). The second and third approaches are based on the optimal grouping methodology proposed in \cite{MagCavFac2026}. In this case, we considered both disjoint partitions and fixed-scenario groupings, where each group is constructed with a fixed reference scenario, identified by the optimal grouping procedure itself.

Regarding the parameters of the objective function, we considered $\beta=0.5$, i.e., mean-risk formulation with equal weights assigned to expectation and CVaR, and $\alpha=0.95$ as the confidence level of the CVaR. For the small and medium instances, we imposed a time limit of 86400s (24 hours). For the large instance, preliminary experiments showed that neither the multi-cut method, the single-cut method, nor the proposed refinement chain cuts method were able to reach optimality within this limit. Therefore, we reduced the time limit to 10800s (3 hours) and focused the analysis on the final relative optimality gap. In Algorithm \ref{algo:refchaincuts_algo}, the absolute and relative tolerances were set to $\varepsilon_a=\varepsilon_r=10^{-6}$.

Both the implementation for a fixed refinement level and Algorithm \ref{algo:refchaincuts_algo} between consecutive refinement levels were developed in Python 3.12.2 using Gurobi Optimizer v13.0.1 with a MIPGap equal to $10^{-5}$. All experiments were performed on the Galileo100 HPC cluster at CINECA using 1 node, 8 CPUs per task, and 64 GB of RAM.

\subsection{Results and discussion} \label{subsec_results_discussion}

\begin{table}[htbp]

\centering
\caption{Summary of the best computational results, obtained by solving model \eqref{eq:master_problem_omega_i_j} at different levels of the refinement chain.}
\label{tab:R04R05R07_summary}
\setlength{\tabcolsep}{5pt}
\begin{tabular}{llllllll}
\toprule
\multirow{2}{*}{Instance} & \multirow{2}{*}{$|\mathcal{S}|$} & Grouping & Group & Time & Time & Relative & Gap\\
& & configuration & size & (s) & red. (\%) & gap & red. (\%)
\\
\midrule
\multirow{5}{*}{R04} & \multirow{5}{*}{256} & Multi-cut & 1 & 876.38 & 97.41 & $<10^{-6}$ & -- \\
&& Disjoint seq. & 4 & 904.86 & 97.33 & $<10^{-6}$ & -- \\
&& Disjoint opt. & 2 & \textbf{\underline{505.35}} & \textbf{\underline{98.51}} & $<10^{-6}$ & -- \\
&& Fixed-scenario opt. & 4 & 1053.57 & 96.89 & $<10^{-6}$ & -- \\
&& Single-cut & 256 & 33880.20 & 0.00 & $<10^{-6}$ & -- \\
\midrule
\multirow{5}{*}{R05}& \multirow{5}{*}{256} & Multi-cut & 1 & 3268.19 & 68.02 & $<10^{-6}$ & -- \\
&& Disjoint seq. & 32 & 3807.63 & 62.74 & $<10^{-6}$ & -- \\
&& Disjoint opt. & 32 & \underline{3232.68} & \underline{68.37} & $<10^{-6}$ & -- \\
&& Fixed-scenario opt. & 16 & \textbf{\underline{3210.15}} & \textbf{\underline{68.59}} & $<10^{-6}$ & -- \\
&& Single-cut & 256 & 10220.09 & 0.00 & $<10^{-6}$ & -- \\
\midrule
\multirow{10}{*}{R07} & \multirow{4}{*}{256} & Multi-cut & 1 & 10800.00* & -- &  $1.81 \times 10^{-2}$ & 28.16 \\
&& Disjoint opt. & 2 & 10800.00* & -- &  $\mathbf{\underline{8.60 \times 10^{-3}}}$ & $\mathbf{\underline{65.78}}$ \\
&& Fixed-scenario opt. & 4 & 10800.00* & -- & $\underline{1.42 \times 10^{-2}}$ & \underline{43.56} \\
&& Single-cut & 256 & 10800.00* & -- &  $2.51 \times 10^{-2}$ & 0.00 \\
\cmidrule{2-8}
& \multirow{3}{*}{1024} & Multi-cut & 1 & 10800.00* & -- &  $1.13 \times 10^{-2}$ & 75.57 \\
&& Disjoint opt. & 2 & 10800.00* & -- &  $\mathbf{\underline{9.69 \times 10^{-3}}}$ & $\mathbf{\underline{79.10}}$ \\
&& Single-cut & 1024 & 10800.00* & -- &  $4.64 \times 10^{-2}$ & 0.00 \\
\cmidrule{2-8}
& \multirow{3}{*}{2048} & Multi-cut & 1 & 10800.00* & -- &  $1.44 \times 10^{-2}$ & 72.57 \\
&& Disjoint opt. & 16 & 10800.00* & -- &  $\mathbf{\underline{1.18 \times 10^{-2}}}$ & $\mathbf{\underline{77.53}}$ \\
&& Single-cut & 2048 & 10800.00* & -- &  $5.25 \times 10^{-2}$ & 0.00 \\
\bottomrule
\end{tabular}
\end{table}

Table \ref{tab:R04R05R07_summary} reports the best computational results obtained when solving model \eqref{eq:master_problem_omega_i_j} at a fixed refinement level. The reductions are computed with respect to the single-cut formulation. For each instance, the best reduction is reported in bold, while results that also outperform the multi-cut formulation are underlined. The asterisk indicates that the corresponding time limit has been reached. Detailed results with all the grouping configurations are reported in Appendix \ref{sec_app_C3} (see Tables \ref{tab:R04R05_beta05}--\ref{tab:R07_10242048}).

For the R04 and R05 instances, all approaches achieve the optimal solution within the time limit. Performance can therefore be compared in terms of computational time. In both cases, intermediate levels of the refinement chain significantly outperform the single-cut formulation. For the R04 instance, the best result is obtained by the optimal disjoint grouping with group size 2, yielding a time reduction of 98.51\% with respect to the single-cut formulation. Moreover, this configuration also outperforms the multi-cut formulation, indicating that a limited degree of scenario aggregation can provide a more favorable balance between the number of generated cuts and the complexity of the resulting master problem. A similar behavior is observed for the R05 instance, where the best performance is achieved by the fixed-scenario optimal grouping with group size 16, corresponding to a reduction of 68.59\%.

The results for the large R07 instance confirm the trends observed for the R04 and R05 instances. In this case, since all runs reached the prescribed time limit, performance is assessed in terms of the final relative gap, computed as in line \ref{alg_ln24} of Algorithm \ref{algo:refchaincuts_algo}, where the lower bound and upper bound are evaluated for the corresponding fixed level $j$ of the refinement chain. For the instance with $|\mathcal{S}|=256$, the best result is obtained by the optimal disjoint grouping with group size 2, which reduces the final relative gap by 65.78\% with respect to the single-cut formulation. Moreover, this configuration also outperforms the multi-cut formulation, whose gap reduction is limited to 28.16\%.

To further assess the effectiveness of the proposed approach on larger-scale instances, we increased the number of scenarios to $|\mathcal S|=1024$ and $|\mathcal S|=2048$ when using disjoint optimal partitions. The results reported in Table \ref{tab:R04R05R07_summary} show that the benefits of scenario aggregation remain significant as the scenario set grows. In particular, the best-performing configurations achieve gap reductions of 79.10\% and 77.53\% for $|\mathcal{S}|=1024$ and $|\mathcal{S}|=2048$, respectively. In both cases, the best refinement level also outperforms the multi-cut formulation, indicating that the proposed approach remains effective even when the number of scenarios becomes very large.

A more detailed analysis of the results reported in Tables \ref{tab:R07_beta05} and \ref{tab:R07_10242048} reveals a consistent pattern across all considered scenario set sizes. The results highlight the existence of an effective intermediate level of aggregation. Moderately aggregated partitions consistently yield substantial reductions of the final relative gap with respect to the single-cut formulation, whereas performance deteriorates as the group size increases. In particular, very coarse aggregations provide only limited improvements and, in some cases, approach the behavior of the single-cut formulation. This trend suggests that excessive scenario aggregation weakens the approximation of the recourse function, leading to less informative Benders cuts and consequently slower convergence.

A comparison between sequential and optimal grouping strategies shows that the benefits of optimization-driven partitions are strongly instance dependent. For the R04 instance, the optimal grouping strategy provides the best performance at group sizes 2 and 128, yielding the largest time reductions among all tested configurations. It also outperforms sequential grouping at group size 8, although the improvement is less pronounced. For the R05 instance, optimal grouping is particularly effective for intermediate aggregation levels and provides the best results for group sizes 4, 16, and 32. However, the superiority of optimal grouping is not systematic, as sequential grouping remains competitive and occasionally superior for other aggregation levels. These observations indicate that the effectiveness of a partition depends on the interplay between the scenario structure, the aggregation scheme, and the resulting strength of the generated Benders cuts.

\begin{table}[htbp]

\centering
\caption{Summary of the best computational results, obtained by applying Algorithm \ref{algo:refchaincuts_algo} between consecutive levels of the refinement chain.}
\label{tab:R04R05R07_ALGO_summary}
\setlength{\tabcolsep}{4pt}
\begin{tabular}{llllllll}
\toprule
\multirow{2}{*}{Instance} & \multirow{2}{*}{$|\mathcal{S}|$} & Grouping & Consecutive & Time & Time & Relative & Gap\\
& & configuration & group size & (s) & red. (\%) & gap & red. (\%)
\\
\midrule
\multirow{2}{*}{R04} & \multirow{2}{*}{256} & Disjoint seq. & 1--2 & \textbf{\underline{407.83}} & $\textbf{\underline{98.61}}$ & $<10^{-6}$ & -- \\
& & Disjoint opt. & 1--2 & \underline{645.71} & \underline{98.09} & $<10^{-6}$ & -- \\
\midrule
\multirow{2}{*}{R05} & \multirow{2}{*}{256} & Disjoint seq. & 16--32 & \textbf{3854.77} & \textbf{62.28} & $<10^{-6}$ & -- \\
& & Disjoint opt. & 8--16 & 5595.39 & 45.25 & $<10^{-6}$ & -- \\
\midrule
\multirow{4}{*}{R07} & 256 & Disjoint opt. & 1--2 & 10800.00* & -- & $\mathbf{\underline{9.69 \times 10^{-3}}}$ & $\mathbf{\underline{61.44}}$ \\
\cmidrule{2-8}
& 1024 & Disjoint opt. & 4--8 & 10800.00* & -- & $\mathbf{1.29 \times 10^{-2}}$ & $\mathbf{72.25}$ \\
\cmidrule{2-8}
& 2048 & Disjoint opt. & 1--2 & 10800.00* & -- & $\mathbf{\underline{1.01 \times 10^{-2}}}$ & $\mathbf{\underline{80.75}}$ \\
\bottomrule
\end{tabular}
\end{table}

Table \ref{tab:R04R05R07_ALGO_summary} evaluates the iterative refinement procedure proposed in Algorithm \ref{algo:refchaincuts_algo}, where information generated at one refinement level is transferred to the next level. Detailed results with all the grouping configurations are reported in Appendix \ref{sec_app_C3} (see Tables \ref{tab:R04R05_ALGO_beta05}--\ref{tab:R07_ALGO_beta05}). For the R04 instance, Algorithm \ref{algo:refchaincuts_algo} is particularly effective when moving between the finest refinement levels. The best result is obtained for the transition from group size 1 to group size 2, achieving a reduction of 98.61\% relative to the single-cut formulation, which improves upon the corresponding fixed-level approach. Similar behavior is observed for the transition from group size 2 to group size 4 (see Table \ref{tab:R04R05_ALGO_beta05}). For the R05 instance, the gains obtained through the iterative procedure are less pronounced. The best performance is observed for the transition between levels 16 and 32, with a reduction of 62.28\%. Although Algorithm \ref{algo:refchaincuts_algo} remains competitive, the improvements are generally smaller than those observed for R04.

The results for the R07 instance further highlight the benefits of transferring information between consecutive refinement levels. For $|\mathcal{S}|=256$, the best performance is obtained for the transition from group size 1 to group size 2, achieving a gap reduction of 61.44\% with respect to the single-cut formulation. Similar results are observed for the larger scenario sets, where the best reductions are 72.25\% and 80.75\% for $|\mathcal{S}|=1024$ and $|\mathcal{S}|=2048$, respectively. Transitions involving coarser aggregation levels generally lead to larger relative gaps (see Table \ref{tab:R07_ALGO_beta05}). This behavior indicates that the Benders cuts inherited from one refinement level provide increasingly less informative approximations of the recourse function when transferred to refinement levels characterized by a significantly larger number of scenario groups.

\begin{figure}[h]
    \centering

    \begin{subfigure}{0.7\textwidth}
        \centering
        \includegraphics[width=\textwidth]{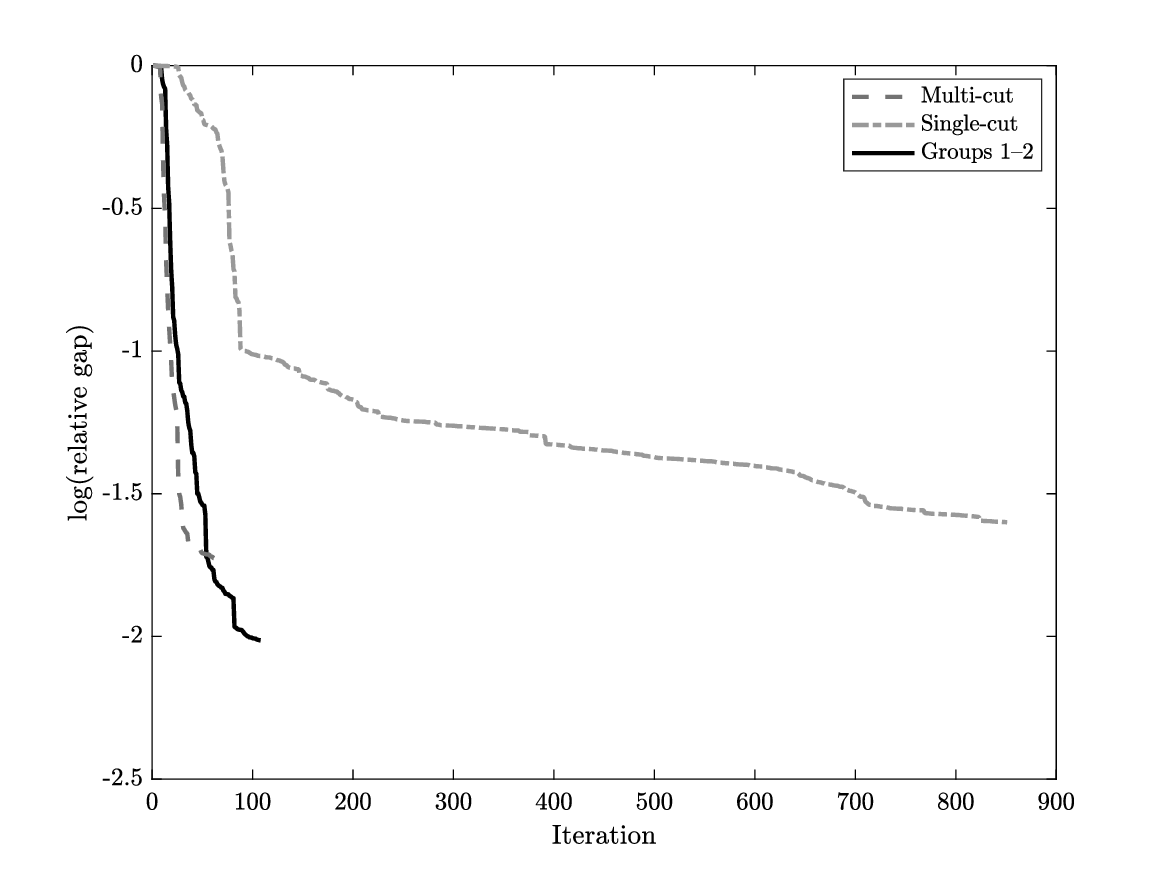}
        \caption{Relative optimality gap.}
        \label{B_fig_relgap_R07_gap}
    \end{subfigure}

    \vspace{0.3cm}

    \begin{subfigure}{0.48\textwidth}
        \centering
        \includegraphics[width=\textwidth]{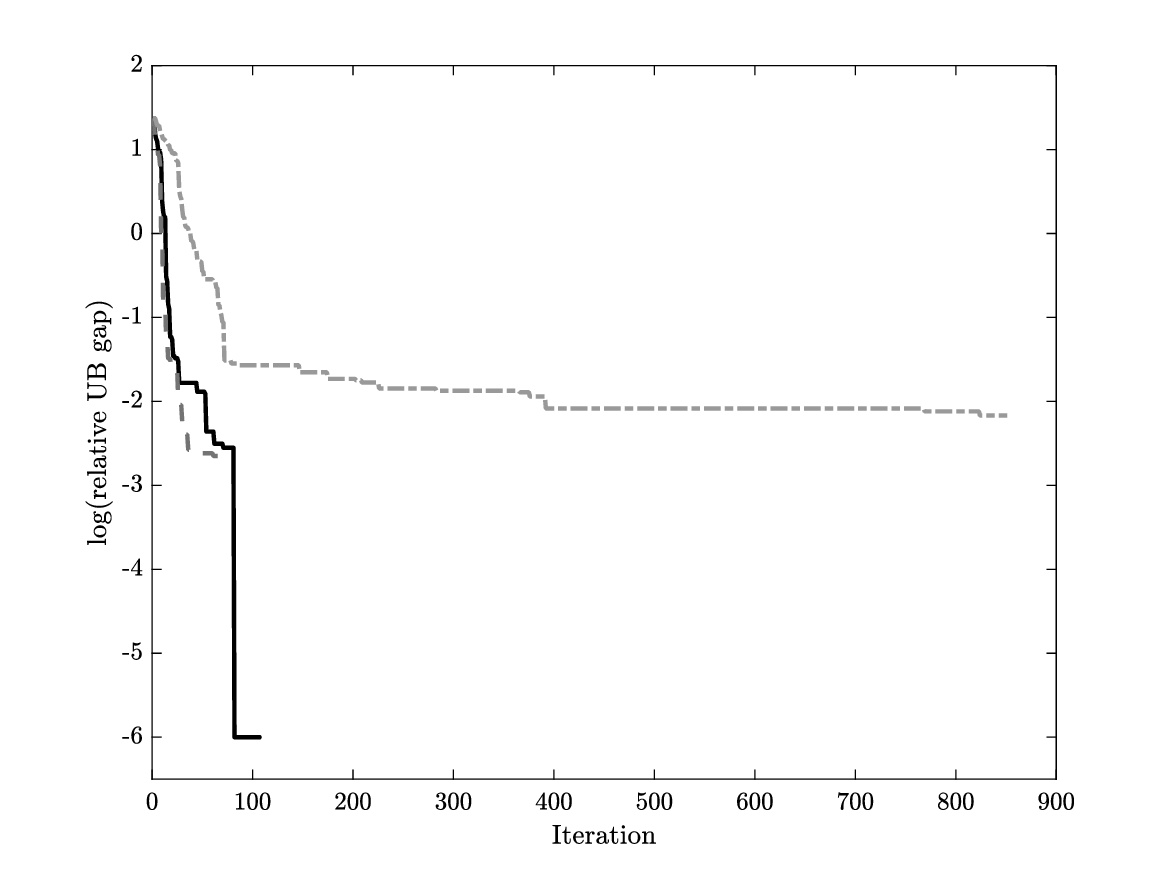}
        \caption{Relative gap from the best UB.}
        \label{B_fig_relgap_R07_UB}
    \end{subfigure}
    \hfill
    \begin{subfigure}{0.48\textwidth}
        \centering
        \includegraphics[width=\textwidth]{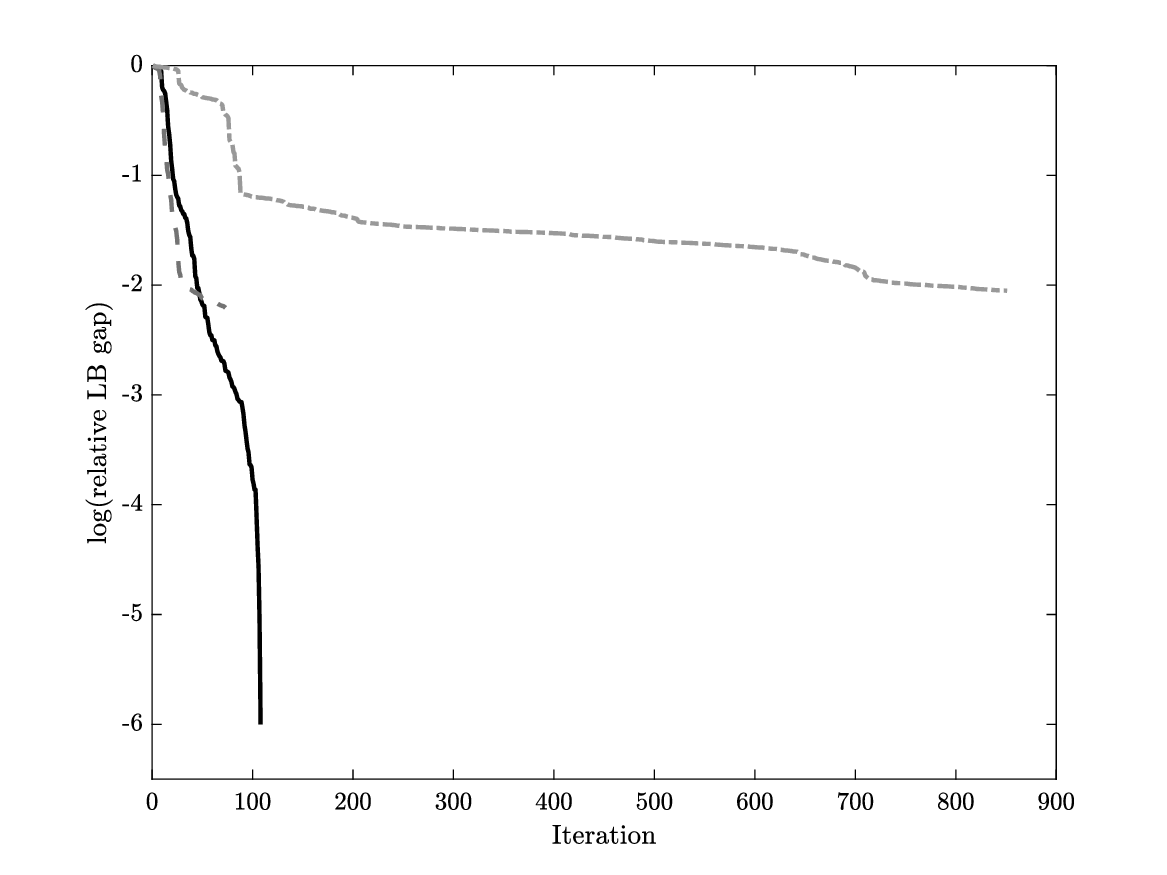}
        \caption{Relative gap from the best LB.}
        \label{B_fig_relgap_R07_LB}
    \end{subfigure}

    \caption{Relative gaps as a function of the number of Benders iterations for the R07 instance with $|\mathcal{S}|=256$. Comparison between the multi-cut formulation, the single-cut formulation, and the refinement-based Algorithm \ref{algo:refchaincuts_algo} between group sizes 1 and 2.}
    \label{B_fig_relgap_R07}
\end{figure}

To further investigate the performance of Algorithm \ref{algo:refchaincuts_algo} when solving the large-scale R07 instance with $|\mathcal{S}|=256$, Figure \ref{B_fig_relgap_R07} and Table \ref{tab:R07_convergence_statistics} provide a more detailed analysis of the convergence behavior of the different formulations. In particular, Figure \ref{B_fig_relgap_R07} illustrates the evolution of the optimality gap and of the lower and upper bounds over the Benders iterations, while Table \ref{tab:R07_convergence_statistics} reports complementary statistics on the number of iterations, the average time per iteration, the number of optimality cuts, and the attainment of the best upper bound. For ease of visualization, Figure \ref{B_fig_relgap_R07} compares the classical multi-cut and single-cut formulations with Algorithm \ref{algo:refchaincuts_algo} applied between group sizes 1--2, as this configuration dominates all the other refinement-based formulations. Table \ref{tab:R07_convergence_statistics}, instead, provides a broader comparison by considering different pairs of consecutive levels of the refinement chain, from group sizes 1--2 to 16--32.

Figure \ref{B_fig_relgap_R07_gap} reports the logarithm of the relative optimality gap $gap_r$ (see line \ref{alg_ln24} of Algorithm \ref{algo:refchaincuts_algo}) as a function of the number of Benders iterations. Figures \ref{B_fig_relgap_R07_UB} and \ref{B_fig_relgap_R07_LB} complement this analysis by reporting the logarithm of the relative gaps between the upper and lower bounds obtained at each iteration and the best upper and lower bounds, respectively. These best bounds are attained by the 1--2 configuration. For visualization on the logarithmic scale, zero gaps are replaced by $10^{-6}$.

The evolution of the relative optimality gap in Figure \ref{B_fig_relgap_R07_gap} shows that the single-cut formulation consistently exhibits the slowest convergence rate, maintaining the largest relative gap throughout the entire solution process. This confirms the weaker approximation of the recourse function provided by a highly aggregated representation of the scenario set. The comparison between the multi-cut formulation and Algorithm \ref{algo:refchaincuts_algo} highlights an interesting trade-off. During the initial phase of the solution process, up to approximately iteration 57, the multi-cut formulation achieves smaller relative gaps than the proposed algorithm between group sizes 1--2. This indicates that the larger number of cuts generated at each iteration provides a stronger approximation of the recourse function. However, after this point the 1--2 configuration overtakes the multi-cut formulation and achieves smaller relative gaps. This behavior suggests that the refinement chain strategy is able to preserve much of the strength of the multi-cut approach while maintaining a significantly smaller master problem, ultimately leading to better overall convergence within the available computational time.

Figures \ref{B_fig_relgap_R07_UB} and \ref{B_fig_relgap_R07_LB} provide further insight into the evolution of the primal and dual bounds. In the L-shaped method, convergence depends on both identifying high-quality primal solutions, reflected in the upper bound, and generating optimality cuts that progressively strengthen the master problem, reflected in the lower bound. Regarding the former (see Figure \ref{B_fig_relgap_R07_UB}), the multi-cut formulation exhibits faster initial progress and attains its best upper bound at iteration 61 (see also the fifth column of Table \ref{tab:R07_convergence_statistics}). The 1--2 configuration initially improves the upper bound more slowly, but continues to generate high-quality primal solutions and eventually attains the best upper bound among all the considered methods at iteration 82. A different behavior emerges for the lower bound (see Figure \ref{B_fig_relgap_R07_LB}). Although the multi-cut formulation initially provides a strong approximation of the recourse function, the 1--2 configuration progressively produces tighter lower bounds and ultimately attains the best lower bound. Hence, the advantage of the 1--2 configuration cannot be attributed solely to solving a smaller master problem more quickly. The aggregated cuts generated along the refinement chain lead to a different Benders trajectory, which combines continued improvement of the primal bound with effective strengthening of the dual bound. Together with the lower computational cost of each iteration, this allows the 1--2 configuration to achieve the smallest relative optimality gap within the available computational time.

Table \ref{tab:R07_convergence_statistics} highlights the computational trade-off associated with the different levels of scenario aggregation. At one extreme, the multi-cut formulation generates one optimality cut for each scenario at every iteration, resulting in 15995 optimality cuts and an average iteration time of 147.95 seconds. Consequently, only 73 iterations are completed within the three-hour time limit. At the other extreme, the single-cut formulation generates only one cut per iteration, resulting in an average iteration time of 12.69 seconds and allowing 851 iterations to be completed. The refinement-based configurations lie between these two extremes. As the group size increases, the number of generated cuts decreases, reducing the average computational cost per iteration and allowing a larger number of iterations to be performed. For instance, the 1--2 configuration completes 108 iterations with an average iteration time of 100.00 seconds and 13824 generated optimality cuts, whereas the 16--32 configuration completes 440 iterations with an average iteration time of 24.55 seconds and 3520 optimality cuts.

The larger number of iterations enabled by stronger scenario aggregation, however, does not necessarily translate into better convergence. As reported in the last column of Table \ref{tab:R07_convergence_statistics}, the 1--2 configuration achieves the smallest final relative gap, equal to $9.69\times10^{-3}$, compared with $1.81\times10^{-2}$ for the multi-cut formulation and $2.51\times10^{-2}$ for the single-cut formulation. The intermediate configurations also show that progressively reducing the computational cost per iteration does not lead to a monotonic improvement in the final gap. These results indicate that the computational advantage of the refinement-based approach stems from balancing the strength of the recourse function approximation with the complexity of the resulting master problem, rather than simply maximizing the number of iterations performed.

\begin{table}[htbp]

\centering
\caption{Computational results for the R07 instance with $|\mathcal{S}|=256$, comparing the multi-cut and single-cut formulations with Algorithm \ref{algo:refchaincuts_algo} between consecutive levels.}
\label{tab:R07_convergence_statistics}

\setlength{\tabcolsep}{4pt}
\begin{tabular}{lllllll}
\toprule
Grouping & Consecutive & Number of & Avg. iteration & Best UB & Number of & Final\\
configuration & group size & iterations & time (s) & iteration & opt. cuts & rel. gap\\
\midrule
Multi-cut & -- & 73 & 147.95 & 61 & 15995 & $1.81 \times 10^{-2}$ \\
\midrule
Disjoint opt. & 1--2 & 108 & 100.00 & 82 & 13824 & $9.69 \times 10^{-3}$ \\
Disjoint opt. & 2--4 & 150 & 72.00 & 145 & 9600 & $1.17 \times 10^{-2}$ \\
Disjoint opt. & 4--8 & 229 & 47.16 & 177 & 7328 & $2.13 \times 10^{-2}$ \\
Disjoint opt. & 8--16 & 296 & 36.49 & 254 & 4736 & $1.65 \times 10^{-2}$ \\
Disjoint opt. & 16--32 & 440 & 24.55 & 224 & 3520 & $2.51 \times 10^{-2}$ \\
\midrule
Single-cut & -- & 851 & 12.69 & 824 & 851 & $2.51 \times 10^{-2}$ \\
\bottomrule
\end{tabular}%

\end{table}

Overall, the computational study highlights the existence of an intermediate region of the refinement chain that provides a favorable trade-off between the strength of the generated cuts and the complexity of the master problem. The proposed refinement chain cuts framework generalizes both the single-cut and multi-cut formulations, while providing the flexibility to select intermediate aggregation levels that can improve computational performance. A particularly relevant observation is that the best-performing configurations are consistently found near the multi-cut level of the refinement chain. At these levels, scenario aggregation retains much of the strength associated with a disaggregated representation while reducing the number of cuts and the computational cost of each iteration. This finding suggests that a limited amount of aggregation can reduce the burden associated with managing a large number of cuts while preserving a strong approximation of the recourse function. Furthermore, the iterative refinement-based procedure outlined in Algorithm \ref{algo:refchaincuts_algo} proves particularly effective when applied between finely spaced refinement levels.

\section{Conclusions} \label{sec_conclusions}
In this paper, we introduced a novel \textit{L-shaped refinement chain cuts method} for solving two-stage stochastic programs by integrating the refinement chain of scenarios within the classical L-shaped decomposition framework. We proposed a generalized decomposition approach in which one dual subproblem is solved for each subgroup of scenarios at every refinement level, thereby encompassing both the classical multi-cut and single-cut L-shaped methods as special cases. We also established theoretical convergence properties for every level of the refinement chain and characterized the relationships between consecutive refinement levels in terms of feasibility and optimality cuts. These results enabled the development of an iterative refinement-based solution algorithm capable of exploiting information generated across different levels of the refinement chain. Finally, we demonstrated the effectiveness of the proposed framework on a two-stage stochastic fixed-charge multicommodity network design problem under a mean-risk formulation, showing competitive computational performance and highlighting the applicability of the proposed approach to large-scale risk-averse stochastic optimization problems.

Building on these results, there are several promising directions for future research, both from computational and methodological perspectives. First, advanced refinement-based strategies across multiple levels of the refinement chain could be developed, allowing the derivation of an algorithm that adaptively refines and explores selected subsets of scenarios in greater detail. This could further enhance both the efficiency and the convergence behavior of the proposed framework. Second, the development of strategies capable of identifying \textit{a priori} the most promising aggregation level and scenario grouping strategy represents an interesting research direction. The computational results suggest that the best performance is often achieved at intermediate levels of the refinement chain and that the effectiveness of a given partition is strongly instance dependent. Therefore, deriving indicators or learning-based approaches able to predict suitable aggregation structures before the optimization process could significantly improve the practical applicability of the proposed framework. Third, acceleration and stabilization techniques for Benders decomposition, such as those proposed in \cite{rahmaniani2018accelerating,yuan2009enhanced}, could be incorporated to further improve the computational performance of the proposed method. The integration of such techniques could reduce the number of iterations and improve the convergence of the algorithm on large-scale instances.

From a methodological perspective, fourth, the proposed framework could be extended to multi-stage stochastic programming problems in a nested Benders decomposition fashion. Such an extension would allow the refinement chain structure to be exploited across multiple stages of the stochastic process. Finally, alternative approaches for constructing the refinement chain of scenarios could be investigated and integrated within the proposed decomposition framework. This would provide greater flexibility in the definition of scenario subgroups and could potentially improve computational tractability.

\backmatter

\bmhead{Acknowledgements}

The authors acknowledge the University of Bergamo and the agreement with CINECA for the availability of high performance computing resources and support. Finally, Francesca Maggioni and Andrea Spinelli acknowledge the support received from Gruppo Nazionale per il Calcolo Scientifico (GNCS-INdAM).

\section*{Declarations}

\bmhead{Competing Interests} The authors declare that they have no conflict of interest.

\bmhead{Code availability} The codes developed for this work are
made publicly available on GitHub (\url{https://github.com/aspinellibg/RefinementLshaped}).

\color{black}
\clearpage
\newpage
\bibliography{bibliography_BoundsBenders}

\clearpage
\newpage

\begin{appendices}

\section{Proofs}\label{sec_app_A}

In this appendix, we provide the proofs of the technical results omitted from the main text.

\subsection{Proof of Lemma \ref{lemma_relation_phi_f_disjoint}}\label{sec_app_A1}

By applying properties (P2)--(P3) at refinement level $j$, we obtain:

\begin{equation} \label{lemma1_proof_1}
    \mathbb{P}
    = \sum_{i=1}^{m_j} \pi_{i}^{(j)} \mathbb{P}_{i}^{(j)} 
    = \sum_{i=1}^{m_j}\pi_{i}^{(j)} \sum_{\ell=1}^{N_i} \pi_{k_\ell,i}^{(j-1,j)} \mathbb{P}_{k_\ell}^{(j-1)} 
    = \sum_{i=1}^{m_j}\sum_{\ell=1}^{N_i} \pi_{i}^{(j)} \pi_{k_\ell,i}^{(j-1,j)} \mathbb{P}_{k_\ell}^{(j-1)}.
\end{equation}

Moreover, for refinement chains composed either of disjoint partitions or of fixed-scenario subsets, it holds that:
\begin{equation*}
    \forall i=1,\ldots,m_{j-1}, \quad \exists!\,\tilde{i}\in\{1,\ldots,m_j\}: \Omega_{\tilde{i}}^{(j)} \supseteq \Omega_i^{(j-1)}.
\end{equation*}

Since property (P2) holds at every level of the refinement chain, it also holds at level $j-1$. Moreover, the above observation implies that each subset $\Omega_i^{(j-1)}$ appears exactly once among the subsets $\Omega_{k_\ell}^{(j-1)}$, $\ell=1,\ldots,N_i$, associated with level $j$. Hence:

\begin{equation} \label{lemma1_proof_2}
    \mathbb{P}
    = \sum_{i=1}^{m_{j-1}} \pi_i^{(j-1)} \mathbb{P}_i^{(j-1)} 
    = \sum_{i=1}^{m_j}\sum_{\ell=1}^{N_i} \pi_{k_\ell}^{(j-1)} \mathbb{P}_{k_\ell}^{(j-1)}.
\end{equation}

Comparing the last expressions in \eqref{lemma1_proof_1} and \eqref{lemma1_proof_2}, the thesis follows.

\hfill {\Large$\square$}

\subsection{Proof of Lemma \ref{lemma_relations_intralevelwights_consecutivelevels}}\label{sec_app_A2}

    We discuss the two cases separately. On the one hand, if all subsets $\Omega_{k_\ell}^{(j-1)}$ have the first $f\geq 1$ scenarios fixed, then, according to \eqref{eq:weights_scenario_ffixed}, we have that:
    \small 
    \begin{equation*}
        \sum_{\ell=1}^{N_i}\pi_{k_\ell}^{(j-1)} = \sum_{\ell=1}^{N_i} \displaystyle \frac{\sum_{s=f+1}^{|\Omega_{k_{\ell}}^{(j)}|} p^{s}}{1-\sum_{s=1}^{f}p^{s}} 
        = \frac{1}{1-\sum_{s=1}^{f}p^{s}}\sum_{\ell=1}^{N_i} \sum_{s=f+1}^{|\Omega_{k_{\ell}}^{(j)}|} p^{s}
        = \frac{1}{1-\sum_{s=1}^{f}p^{s}}\sum_{s=f+1}^{|\Omega_{i}^{(j)}|} p^{s}
        = \pi_{i}^{(j)},
    \end{equation*}
    \normalsize
    where, in the third equality, we use the fact that each non-fixed scenario belongs to exactly one subset $\Omega_{k_\ell}^{(j-1)}$.

    On the other hand, if all subsets $\Omega_{k_\ell}^{(j-1)}$ are disjoint, then, according to \eqref{eq:weights_scenario_disjoint}, it holds that:
    \begin{equation*}
        \begin{split}
            \pi_{i}^{(j)}=\sum_{s: \bm \omega^s \in \Omega_{i}^{(j)}}p^s
             = \sum_{s: \bm \omega^s \in \bigcup_{\ell=1}^{N_i} \Omega_{k_\ell}^{(j-1)}} p^s 
             &= \sum_{s: \bm \omega^s \in \Omega_{k_1}^{(j-1)}}p^s + \ldots + \sum_{s: \bm \omega^s \in \Omega_{k_{N_i}}^{(j-1)}}p^s\\
             &= \pi_{k_1}^{(j-1)} + \ldots + \pi_{k_{N_i}}^{(j-1)} 
             = \sum_{\ell=1}^{N_i}\pi_{k_\ell}^{(j-1)},
        \end{split}
    \end{equation*}
    where, in the third equality, similarly as before, we rely on the fact that each scenario in $\Omega_{i}^{(j)}$ belongs to exactly one subset $\Omega_{k_\ell}^{(j-1)}$. \hfill {\Large$\square$}

\subsection{Proof of Lemma \ref{lemma:feasibility_convex_comb}}\label{sec_app_A3}

    Let $\tilde{\bm \lambda}^{(j)}_{i}:=\sum_{\ell=1}^{N_i}\pi_{k_\ell,i}^{(j-1,j)}\bm \lambda_{k_\ell}^{(j-1)}$. We need to show that $\tilde{\bm \lambda}^{(j)}_{i}$ satisfies condition \eqref{eq:constr1_scenario_subproblem_DUAL_omega_i_j}.
    Consider scenario $\bm \omega^s\in\Omega^{(j)}_i$. We have that:
    \begin{equation*}
            \bm W^\top \big(\tilde{\bm \lambda}^{(j)}_{i}\big)^s = \bm W ^\top \sum_{\ell=1}^{N_i}\pi_{k_\ell,i}^{(j-1,j)}\big(\bm \lambda_{k_\ell}^{(j-1)}\big)^s
            =\sum_{\ell=1}^{N_i}\pi_{k_\ell,i}^{(j-1,j)}\bigg(\bm W^\top \big(\bm \lambda_{k_\ell}^{(j-1)}\big)^s\bigg).
    \end{equation*}
    Since by hypothesis $\bm \lambda_{k_\ell}^{(j-1)}\in\Lambda^{(j-1)}_{k_\ell}$, each $\big(\bm \lambda_{k_\ell}^{(j-1)}\big)^s$ satisfies condition \eqref{eq:constr1_scenario_subproblem_DUAL_omega_i_j}. Hence, we get:
    \begin{equation*}
        \begin{split}
             \bm W^\top \big(\tilde{\bm \lambda}^{(j)}_{i}\big)^s \leq \sum_{\ell=1}^{N_i}\pi_{k_\ell,i}^{(j-1,j)}p^s_{\Omega_{k_\ell}^{(j-1)}}\bm q^s
             = \bm q^s \sum_{\ell=1}^{N_i}\pi_{k_\ell,i}^{(j-1,j)}p^s_{\Omega_{k_\ell}^{(j-1)}}
             = p^s_{\Omega_{i}^{(j)}}\bm q^s,
        \end{split}
    \end{equation*}
    where the last equality follows from property (P3) of the refinement chain of probabilities. Therefore, $\tilde{\bm \lambda}^{(j)}_{i}$ satisfies condition \eqref{eq:constr1_scenario_subproblem_DUAL_omega_i_j}, which proves the lemma. \hfill {\Large$\square$}

\subsection{Proof of Lemma \ref{lemma:suff_cond}} \label{sec_app_A4}

    We discuss the two cases separately. In the first case, suppose that subsets $\{\Omega_{k_\ell}^{(j-1)}\}_{\ell=1}^{N_i}$ are disjoint. This implies that each scenario $\bm \omega^s\in\Omega_{i}^{(j)}$ appears in one and only one subset $\Omega_{k_{\overline{\ell}(s)}}^{(j-1)}$ at level $j-1$, with $\overline{\ell}(s)\in\{1,\ldots,N_i\}$. For each $\bm \omega^s\in\Omega_{i}^{(j)}$, we can consider:
    \begin{equation}\label{eq_lemma_disjoint}
        (\bm \lambda_{k_{{\ell}}}^{(j-1)})^s:= \begin{cases}
        \displaystyle\frac{1}{\pi_{k_{\ell},i}^{(j-1,j)}}(\bm \lambda^{(j)}_{i})^s &  \text{if }\ell = \overline{\ell}(s)  \\
    0 & \text{if }\ell \neq \overline{\ell}(s).
    \end{cases} 
        \end{equation}
    First, we need to show that the collection of points defined according \eqref{eq_lemma_disjoint} satisfies $\bm \lambda_{k_{{\ell}}}^{(j-1)}\in \Lambda_{k_{{\ell}}}^{(j-1)}$ for all $\ell=1,\ldots,N_i$. Consider a scenario $\bm \omega^s\in\Omega_{i}^{(j)}$ with corresponding unique subset $\Omega_{k_{\overline{\ell}(s)}}^{(j-1)}\ni\bm \omega^s$. For all subsets $\Omega_{k_{\ell}}^{(j-1)}$ with $\ell\neq \overline{\ell}(s)$, it follows that $(\bm \lambda_{k_{\ell}}^{(j-1)})^s= 0$, and therefore condition \eqref{eq:constr1_scenario_subproblem_DUAL_omega_i_j} is automatically satisfied by setting $p^s_{\Omega_{k_\ell}^{(j-1)}}=0$. Conversely, for the subset $\Omega_{k_{\overline{\ell}(s)}}^{(j-1)}$, we have that:
        \begin{equation} \label{eq:dimlemma}
                \bm W^\top (\bm \lambda_{k_{\overline{\ell}(s)}}^{(j-1)})^s=\frac{1}{\pi_{k_{\overline{\ell}(s)},i}^{(j-1,j)}}\bm W^\top(\bm \lambda^{(j)}_{i})^s\leq \frac{1}{\pi_{k_{\overline{\ell}(s)},i}^{(j-1,j)}} p^{s}_{\Omega_{i}^{(j)}}\bm q^s,
        \end{equation}
        where the last inequality holds because $(\bm \lambda_i^{(j)})^s\in(\Lambda^{(j)}_i)^s$. Moreover, combining \eqref{eq:relation_phi_f_disjoint} with \eqref{eq:prob_scenario_disjoint}--\eqref{eq:weights_scenario_disjoint}, the right-hand side of \eqref{eq:dimlemma} can be reformulated as:
        \begin{equation*}
                \frac{1}{\pi_{k_{\overline{\ell}(s)},i}^{(j-1,j)}} p^{s}_{\Omega_{i}^{(j)}}\bm q^s   =\frac{\pi_{i}^{(j)}}{\pi_{k_{\overline{\ell}(s)}}^{(j-1)}}\cdot\frac{1}{\pi_{i}^{(j)}}p^s\bm q^s
                =\frac{1}{\pi_{k_{\overline{\ell}(s)}}^{(j-1)}}\cdot p^s_{\Omega_{k_{\overline{\ell}(s)}}^{(j-1)}}\pi_{k_{\overline{\ell}(s)}}^{(j-1)}\bm q^s
                =p^s_{\Omega_{k_{\overline{\ell}(s)}}^{(j-1)}}\bm q^s.
        \end{equation*}
        Therefore, $\bm W^\top (\bm \lambda_{k_{\overline{\ell}(s)}}^{(j-1)})^s\leq p^s_{\Omega_{k_{\overline{\ell}(s)}}^{(j-1)}}\bm q^s$, or, equivalently, $(\bm \lambda_{k_{\overline{\ell}(s)}}^{(j-1)})^s\in (\Lambda_{k_{\overline{\ell}(s)}}^{(j-1)})^s$.
        Second, it remains to prove that the collection $\{\bm \lambda_{k_{\ell}^{(j-1)}}\}_{\ell=1}^{N_i}$ satisfies \eqref{eq_thesis_lemma}. To verify this, note that for all scenarios $\bm \omega^s\in\Omega_{i}^{(j-1)}$, it holds that:
            \begin{equation*}
        \sum_{\ell=1}^{N_i}\pi_{k_\ell,i}^{(j-1,j)}(\bm \lambda_{k_\ell}^{(j-1)})^s=\pi_{k_{\overline{\ell}(s)},i}^{(j-1,j)}(\bm \lambda^{(j-1)}_{k_{\overline{\ell}(s)}})^s=\pi_{k_{\overline{\ell}(s)},i}^{(j-1,j)}\cdot\frac{1}{\pi_{k_{\overline{\ell}(s)},i}^{(j-1,j)}}(\bm \lambda^{(j)}_{i})^s=(\bm \lambda^{(j)}_{i})^s.
    \end{equation*}

    In the second case, suppose that all subsets $\{\Omega_{k_\ell}^{(j-1)}\}_{\ell=1}^{N_i}$ have the first $f\geq 1$ scenarios fixed, appearing in all of them. By property (R2), these $f$ scenarios are also fixed in $\Omega^{(j)}_i$, since $\Omega^{(j)}_i$ is the union of the subsets $\{\Omega_{k_\ell}^{(j-1)}\}_{\ell=1}^{N_i}$. Therefore, for all $s=1,\ldots,f$, it is sufficient to set:
    \begin{equation*}
        (\bm \lambda_{k_\ell}^{(j-1)})^s:=(\bm \lambda^{(j)}_{i})^s \qquad \forall \ell=1,\ldots,N_i.
    \end{equation*}
    By construction, all remaining and non-fixed scenarios appear in one and only one subset $\Omega_{k_{\overline{\ell}(s)}}^{(j-1)}$, with $\overline{\ell}(s)\in\{1,\ldots,N_i\}$. Therefore, for all $s=f+1,\ldots,|\Omega_{i}^{(j)}|$, we can define $(\bm \lambda_{k_{{\ell}}}^{(j-1)})^s$ as in \eqref{eq_lemma_disjoint}.

    For the fixed scenarios, condition \eqref{eq:constr1_scenario_subproblem_DUAL_omega_i_j} is satisfied, because probability $p^s_{\Omega_{i}^{(j)}}=p^s$ for all $s=1,\ldots,f$ (see \eqref{eq:prob_scenario_ffixed}). For the non-fixed scenario, the proof is analogous to the case of disjoint subsets discussed above. Regarding condition \eqref{eq_thesis_lemma}, for all fixed scenarios $s=1,\ldots,f$, it holds that:
    \begin{equation*}
        \sum_{\ell=1}^{N_i}\pi_{k_\ell,i}^{(j-1,j)}(\bm \lambda_{k_\ell}^{(j-1)})^s=\sum_{\ell=1}^{N_i}\pi_{k_\ell,i}^{(j-1,j)}(\bm \lambda^{(j)}_{i})^s=(\bm \lambda^{(j)}_{i})^s\sum_{\ell=1}^{N_i}\pi_{k_\ell,i}^{(j-1,j)}=(\bm \lambda^{(j)}_{i})^s,
    \end{equation*}
    where the last equality follows from property (P2) on the intra-level weights. Finally, for all remaining scenarios, we have that:
    \begin{equation*}
        \sum_{\ell=1}^{N_i}\pi_{k_\ell,i}^{(j-1,j)}(\bm \lambda_{k_\ell}^{(j-1)})^s=\pi_{k_{\overline{\ell}},i}^{(j-1,j)}(\bm \lambda^{(j-1)}_{k_{\overline{\ell}}})^s=\pi_{k_{\overline{\ell}},i}^{(j-1,j)}\cdot\frac{1}{\pi_{k_{\overline{\ell}},i}^{(j-1,j)}}(\bm \lambda^{(j)}_{i})^s=(\bm \lambda^{(j)}_{i})^s.
    \end{equation*}
    This concludes the proof. \hfill {\Large$\square$}

\subsection{Proof of Proposition \ref{prop:extremepoints_omegaij}} \label{sec_app_A5}

 By Lemma \ref{lemma:feasibility_convex_comb}, the point $\bm {e}^{(j)}_{i}$ belongs to $\Lambda^{(j)}_i$, since it is defined as a convex combination of points in $\{\Lambda^{(j-1)}_{k_\ell}\}_{\ell=1}^{N_i}$ with weights $\{\pi_{k_\ell,i}^{(j-1,j)}\}_{\ell=1}^{N_i}$. We first prove the forward implication. Suppose that each $\bm e_{k_\ell}^{(j-1)}$ is an extreme point of $\Lambda_{k_\ell}^{(j-1)}$ for all $\ell=1,\ldots,N_i$. Assume, by contradiction, that $\bm e_i^{(j)}$ is not an extreme point of $\Lambda_i^{(j)}$. Then, there exist distinct points $\tilde{\bm e}_i^{(j)},\hat{\bm e}_i^{(j)}\in \Lambda^{(j)}_i$, with $\tilde{\bm e}_i^{(j)},\hat{\bm e}_i^{(j)}\neq \bm e^{(j)}_i$ and $\alpha\in(0,1)$ such that $\bm e_i^{(j)}=\alpha\tilde{\bm e}_i^{(j)}+(1-\alpha)\hat{\bm e}_i^{(j)}$. By Lemma \ref{lemma:suff_cond}, since $\tilde{\bm e}_i^{(j)},\hat{\bm e}_i^{(j)}\in \Lambda^{(j)}_i$, there exist two collections $\{\tilde{\bm e}_{k_\ell}^{(j-1)}\}_{\ell=1}^{N_i}$, $\{\hat{\bm e}_{k_\ell}^{(j-1)}\}_{\ell=1}^{N_i}$ with $\tilde{\bm e}_{k_\ell}^{(j-1)},\hat{\bm e}_{k_\ell}^{(j-1)}\in\Lambda^{(j)}_{k_\ell}$ for all $\ell=1,\ldots,N_i$ such that:
    \begin{equation*}
        \tilde{\bm e}_i^{(j)}=\sum_{\ell=1}^{N_i}\pi_{k_\ell,i}^{(j-1,j)}\tilde{\bm e}_{k_\ell}^{(j-1)} \qquad \text{and} \qquad \hat{\bm e}_i^{(j)}=\sum_{\ell=1}^{N_i}\pi_{k_\ell,i}^{(j-1,j)}\hat{\bm e}_{k_\ell}^{(j-1)}.
    \end{equation*}
    Therefore, we can re-write $\bm e_{i}^{(j)}$ as:
    \begin{equation*}
    \begin{split}
        \bm e_i^{(j)}&=\alpha\sum_{\ell=1}^{N_i}\pi_{k_\ell,i}^{(j-1,j)}\tilde{\bm e}_{k_\ell}^{(j-1)}+(1-\alpha)\sum_{\ell=1}^{N_i}\pi_{k_\ell,i}^{(j-1,j)}\hat{\bm e}_{k_\ell}^{(j-1)}\\
        &=\sum_{\ell=1}^{N_i}\pi_{k_\ell,i}^{(j-1,j)}[\alpha\tilde{\bm e}_{k_\ell}^{(j-1)}+(1-\alpha)\hat{\bm e}_{k_\ell}^{(j-1)}].
        \end{split}
    \end{equation*}
    Comparing this expression with \eqref{eq:extr_point} we obtain $\bm e_{k_\ell}^{(j-1)}=\alpha\tilde{\bm e}_{k_\ell}^{(j-1)}+(1-\alpha)\hat{\bm e}_{k_\ell}^{(j-1)}$ for all $\ell=1,\ldots,N_i$. This contradicts the assumption that each $\bm e_{k_\ell}^{(j-1)}$ is an extreme point of $\Lambda_{k_\ell}^{(j-1)}$.
    
    We now prove the converse implication. Suppose that $\bm e_i^{(j)}$ is an extreme point of $\mathcal{E}(\Lambda_{i}^{(j)})$. Fix $\ell\in\{1,\ldots,N_i\}$ and assume, by contradiction, that $\bm e_{k_\ell}^{(j-1)}$ is not an extreme point of $\Lambda_{k_\ell}^{(j-1)}$. Then, there exist distinct points $\tilde{\bm e}_{k_\ell}^{(j-1)}$ and $\overline{\bm e}_{k_\ell}^{(j-1)}$ in $\Lambda_{k_{\ell}}^{(j-1)}$ with $\tilde{\bm e}_{k_\ell}^{(j-1)},\overline{\bm e}_{k_\ell}^{(j-1)}\neq \bm e_{k_\ell}^{(j-1)}$ and $\alpha\in(0,1)$ such that $\bm e_{k_\ell}^{(j-1)}=\alpha\tilde{\bm e}_{k_\ell}^{(j-1)}+(1-\alpha)\overline{\bm e}_{k_\ell}^{(j-1)}$. Therefore, substituting into \eqref{eq:extr_point} yields:
    \begin{equation*}
        \begin{split}
            \bm e_i^{(j)}&=\sum_{\ell=1}^{N_i}\pi_{k_\ell,i}^{(j-1,j)}\bm e_{k_\ell}^{(j-1)}\\
            &=\sum_{\ell=1}^{N_i}\pi_{k_\ell,i}^{(j-1,j)}[\alpha\tilde{\bm e}_{k_\ell}^{(j-1)}+(1-\alpha)\overline{\bm e}_{k_\ell}^{(j-1)}]\\
            &=\alpha \sum_{\ell=1}^{N_i}\pi_{k_\ell,i}^{(j-1,j)}\tilde{\bm e}_{k_\ell}^{(j-1)}+(1-\alpha)\sum_{\ell=1}^{N_i}\pi_{k_\ell,i}^{(j-1,j)}\overline{\bm e}_{k_\ell}^{(j-1)}.
        \end{split}
    \end{equation*}
    By Lemma \ref{lemma:feasibility_convex_comb}, each summation on the last right-hand side defines a point in $\Lambda_{i}^{j}$. Consequently, $\bm e_i^{(j)}$ is not an extreme point of $\Lambda_{i}^{j}$, which contradicts the assumption.
    
    Finally, we evaluate the objective function \eqref{eq:obj_scenario_subproblem_DUAL_omega_i_j} at $\bm e_i^{(j)}$:
    \begin{equation*}
        \begin{split}
            z_i^{(j)} & = \sum_{s: \bm \omega^s \in \Omega_i^{(j)}}(\bm h^s-\bm T^s\bm x)^\top (\bm e_{i}^{(j)})^s\\
            & = \sum_{s: \bm \omega^s \in \Omega_i^{(j)}}(\bm h^s-\bm T^s\bm x)^\top \sum_{\ell=1}^{N_i}\pi_{k_\ell,i}^{(j-1,j)} (\bm e_{k_\ell}^{(j-1)})^s\\
            & = \sum_{\ell=1}^{N_i}\pi_{k_\ell,i}^{(j-1,j)} \sum_{s: \bm \omega^s \in \Omega_i^{(j)}}(\bm h^s-\bm T^s\bm x)^\top (\bm e_{k_\ell}^{(j-1)})^s\\
            & = \sum_{\ell=1}^{N_i}\pi_{k_\ell,i}^{(j-1,j)} \sum_{s: \bm \omega^s \in \Omega_{k_\ell}^{(j-1)}}(\bm h^s-\bm T^s\bm x)^\top (\bm e_{k_\ell}^{(j-1)})^s\\
            & = \sum_{\ell=1}^{N_i}\pi_{k_\ell,i}^{(j-1,j)} z_{k_\ell}^{(j-1)},
        \end{split}
    \end{equation*}
    where the fourth equality follows from property (R2) of the refinement chain. This concludes the proof.    \hfill {\Large$\square$}

\subsection{Proof of Proposition \ref{prop:optimal_extreme_points}}\label{sec_app_A6}

By Proposition \ref{prop:extremepoints_omegaij}, the point $\bm e_{i}^{(j)}$ defined in \eqref{eq:extr_opt_point} is an extreme point of the dual feasible region $\Lambda_i^{(j)}$ at level $j$. Let $z_i^{(j)}$ denote its associated objective function value.
    
    We first prove the forward implication. Suppose that each $\bm e_{k_\ell}^{(j-1)}$ is an optimal extreme point of $\Lambda_{k_\ell}^{(j-1)}$ for all $\ell=1,\ldots,N_i$. Assume, by contradiction, that there exists another extreme point $\tilde{\bm e}_{i}^{(j)} \in \Lambda_i^{(j)}$ with objective function value $\tilde{z}_i^{(j)}>z_i^{(j)}$. By Proposition \ref{prop:extremepoints_omegaij}, there exists a collection $\{\tilde{\bm e}_{k_\ell}^{(j-1)}\}_{\ell=1}^{N_i}$, with $\tilde{\bm e}_{k_\ell}^{(j-1)}\in \Lambda_{k_\ell}^{(j-1)}$, for all $\ell=1,\ldots,N_i$, such that $\tilde{\bm e}_{i}^{(j)}=\sum_{\ell=1}^{N_i}\pi_{k_\ell,i}^{(j-1,j)}\tilde{\bm e}_{k_\ell}^{(j-1)}$, with corresponding objective function values $\tilde{z}_{k_\ell}^{(j-1)}$ satisfying $\tilde{z}_i^{(j)} = \sum_{\ell=1}^{N_i}\pi_{k_\ell,i}^{(j-1,j)} \tilde{z}_{k_\ell}^{(j-1)}$.
    Hence, by the assumption of contradiction, it holds that:
    \begin{equation*}
        \tilde{z}_i^{*(j)}-z_{i}^{(j)} = \sum_{\ell=1}^{N_i}\pi_{k_\ell,i}^{(j-1,j)} [\tilde{z}_{k_\ell}^{(j-1)}-z_{k_\ell}^{*(j-1)}]>0.
    \end{equation*}
    This is not possible since all inter-level weights $\pi_{k_\ell,i}^{(j-1,j)}$ are non-negative and each $z_{k_\ell}^{(j-1)}$ is assumed to be optimal, implying $z_{k_\ell}^{(j-1)}>\tilde{z}_{k_\ell}^{(j-1)}$ for all $\ell=1,\ldots,N_i$.
    
    We now prove the reverse implication. Suppose that $\bm e_i^{(j)}$ is an optimal extreme point of problem \eqref{eq:scenario_subproblem_DUAL_omega_i_j} at level $j$, with objective function value $z_i^{(j)}$. Assume by contradiction that there exists an index $\bar{\ell}\in\{1,\ldots,N_i\}$ and an extreme point $\tilde{\bm e}^{(j-1)}_{k_{\bar{\ell}}}$ such that $\tilde z^{(j-1)}_{k_{\bar{\ell}}} > z^{(j-1)}_{k_{\bar{\ell}}}$. Then, defining $\tilde{\bm e}_{i}^{(j)}$ as:
    \begin{equation*}
        \tilde{\bm e}_{i}^{(j)}:= \pi^{(j-1,j)}_{k_{\bar{\ell}},i} \tilde{\bm e}^{(j-1,j)}_{k_{\bar{\ell}},i} + \sum_{\ell=1,\ell\neq \bar{\ell}}^{N_i} \pi^{(j-1,j)}_{k_\ell,i} \bm e^{(j-1)}_{k_\ell},
    \end{equation*}
    the associated objective function value $\tilde{z}_{i}^{(j)}$ satisfies:
    \small
   \begin{equation*}
        \tilde{z}_{i}^{(j)} \!=\pi^{(j-1,j)}_{k_{\bar{\ell}},i} \tilde{z}^{(j-1,j)}_{k_{\bar{\ell}},i} + \!\!\!\sum_{\ell=1,\ell\neq \bar{\ell}}^{N_i} \pi^{(j-1,j)}_{k_\ell,i} z^{(j-1)}_{k_\ell} > \pi^{(j-1,j)}_{k_{\bar{\ell}},i} {z}^{(j-1,j)}_{k_{\bar{\ell}},i} + \!\!\!\sum_{\ell=1,\ell\neq \bar{\ell}}^{N_i} \!\!\pi^{(j-1,j)}_{k_\ell,i} z^{(j-1)}_{k_\ell} \!= z_i^{(j)},
   \end{equation*}
   \normalsize
   contradicting the optimality of $e_i^{(j)}$. This concludes the proof. \hfill {\Large$\square$}

\section{A numerical example of the L-shaped refinement chain cuts method}\label{sec_app_B}
\color{black}
    Consider the following two-stage stochastic optimization problem: 
\begin{subequations}
\begin{align}
    \min_{x,y_1^s,y_2^s} & \quad -x+\sum_{s=1}^5p^s(-2y^s_1-y_2^s) \\
    \text{s.t.} & \quad y_1^1+2y_2^1=1+2x \\
    & \quad y_1^2+2y_2^2=6-x \\
    & \quad y_1^3+2y_2^3=2+x \\
    & \quad y_1^4+2y_2^4=3+x \\
    & \quad y_1^5+2y_2^5=1-2x \\
    & \quad 0\leq x\leq 10 \\
    & \quad y_1^s,y_2^s\geq 0 & \forall s=1,\ldots,5,
\end{align}
\end{subequations}
 where $\Omega=\{\bm \omega^{1},\bm \omega^{2},\bm \omega^{3},\bm \omega^{4},\bm \omega^{5}\}$, with probabilities $p^{1}=\frac{3}{5},p^{2}=\frac{1}{10},p^{3}=\frac{1}{20},p^{4}=\frac{3}{20},p^{5}=\frac{1}{10}$. The optimal solution is $x^*=\frac{1}{2}$, with optimal objective value $z^*=-\frac{53}{10}$.
 
Assume to construct a refinement chain of $\Omega$ with three levels, where scenario $\bm \omega^1$ is fixed across all subsets:
    \begin{equation*}
\begin{array}{ccc}
     \text{Level } j & \big\{\Omega_{i}^{(j)}\big\}_{i=1}^{m_j} \\[1.5ex]
     3 & \Omega_1^{(3)}=\Omega & \big(\{\bm \omega^{1},\bm \omega^{2},\bm \omega^{3},\bm \omega^{4},\bm \omega^{5}\}\big)\\[1.5ex]
     2 & \big(\Omega_{1}^{(2)},\Omega_{2}^{(2)}\big) & \big(\{\bm \omega^{1},\bm \omega^{2},\bm \omega^{3}\},\{\bm \omega^{1},\bm \omega^{4},\bm \omega^{5}\}\big)\\[1.5ex]
    1 & \big(\Omega_{1}^{(1)},\Omega_{2}^{(1)},\Omega_{3}^{(1)},\Omega_{4}^{(1)}\big) & \big(\{\bm \omega^{1},\bm \omega^{2}\},\{\bm \omega^{1},\bm \omega^{3}\},\{\bm \omega^{1},\bm \omega^{4}\},\{\bm \omega^{1},\bm \omega^{5}\}\big)
\end{array}
\end{equation*}

In the following we consider each refinement level separately and formulate the corresponding Benders master problem \eqref{eq:master_problem_omega_i_j}.

\textbf{Level 1}
\begin{subequations}\label{eq:MP_level1}
\begin{align} 
     \min_{x,\theta^{(1)}_1,\theta^{(1)}_2,\theta^{(1)}_3,\theta^{(1)}_4} & \quad -x+\frac{1}{4}\theta^{(1)}_1+\frac{1}{8}\theta^{(1)}_2+\frac{3}{8}\theta^{(1)}_3+\frac{1}{4}\theta^{(1)}_4 \\
    \text{s.t.} & \quad x \leq 6, \quad x \geq -\frac{1}{2}, \quad x \leq \frac{1}{2} \label{eq:feasibility_level1} \\
    & \quad -6-\frac{8}{5}x \leq \theta_1^{(1)}, \quad -\frac{14}{5}-\frac{8}{5}x \leq \theta_2^{(1)} \label{eq:optimality_12_level1}\\
    & \quad -\frac{18}{5}-\frac{8}{5}x \leq \theta_3^{(1)}, \quad -2-\frac{4}{5}x \leq \theta_4^{(1)} \label{eq:optimality_34_level1}\\
    & \quad 0 \leq x \leq 10\\
    & \quad \theta_1^{(1)},\theta_2^{(1)},\theta_3^{(1)},\theta_4^{(1)} \in \mathbb{R}.
\end{align}
\end{subequations}

\textbf{Level 2}
\begin{subequations}\label{eq:MP_level2}
\begin{align} 
     \min_{x,\theta^{(2)}_1,\theta^{(2)}_2} & \quad -x+\frac{3}{8}\theta^{(2)}_1+\frac{5}{8}\theta^{(2)}_2 \\
    \text{s.t.} & \quad x \leq 6, \quad x \geq -\frac{1}{2}, \quad x \leq \frac{1}{2}, \quad x \geq -2, \quad x \geq -3 \label{eq:feasibility_level2} \\
    & \quad -\frac{74}{15}-\frac{32}{15}x \leq \theta_1^{(2)} \label{eq:optimality_1_level2}\\
    & \quad -\frac{74}{25}-\frac{56}{25}x \leq \theta_2^{(2)} \label{eq:optimality_2_level2}\\
    & \quad 0 \leq x \leq 10\\
    & \quad \theta_1^{(2)},\theta_2^{(2)} \in \mathbb{R}.
\end{align}
\end{subequations}

\textbf{Level 3}
\begin{subequations} \label{eq:MP_level3}
\begin{align} 
     \min_{x,\theta^{(3)}_1} & \quad -x+\theta^{(3)}_1 \\
    \text{s.t.} & \quad x \leq 6, \quad x \geq -\frac{1}{2}, \quad x \leq \frac{1}{2}, \quad x \geq -2, \quad x \geq -3 \label{eq:feasibility_level3} \\
    & \quad -\frac{37}{10}-\frac{11}{5}x \leq \theta_1^{(3)} \label{eq:optimality_1_level3}\\
    & \quad 0 \leq x \leq 10\\
    & \quad \theta_1^{(3)} \in \mathbb{R}.
\end{align}
\end{subequations}

Constraints \eqref{eq:feasibility_level1}, \eqref{eq:feasibility_level2}, and \eqref{eq:feasibility_level3} correspond to the feasibility cuts at each refinement level, whereas constraints \eqref{eq:optimality_12_level1}--\eqref{eq:optimality_34_level1}, \eqref{eq:optimality_1_level2}--\eqref{eq:optimality_2_level2}, and \eqref{eq:optimality_1_level3} are the corresponding optimality cuts. Problems \eqref{eq:MP_level1}, \eqref{eq:MP_level2}, and \eqref{eq:MP_level3} all admit the same optimal solution $x^*$ and optimal objective value $z^*$ as the original two-stage stochastic program, in agreement with Theorem \ref{teo_convergence}.

We now illustrate the relationship between two consecutive refinement levels by considering levels 1 and 2. In particular, we focus on subsets $\Omega_{1}^{(1)}$ and $\Omega_{2}^{(1)}$ at level 1, and on subset $\Omega_{1}^{(1)} \cup \Omega_{2}^{(1)}=\Omega_{1}^{(2)}$ at level 2. According to \eqref{eq:relation_phi_f_disjoint}, the corresponding inter-level weights are $\pi_{1,1}^{(1,2)}=\frac{2}{3}$ and $\pi_{2,1}^{(1,2)} = \frac{1}{3}$.

The extreme points of the dual feasible regions $\Lambda_{1}^{(1)}$, $\Lambda_{2}^{(1)}$, and $\Lambda_{1}^{(2)}$ are:
\begin{displaymath}
    \bm e_{1}^{(1)}=\left[
    \begin{array}{c} 
          \displaystyle -\frac{6}{5}\\[10pt]
          \displaystyle -\frac{4}{5}\\[10pt]
          0
    \end{array} \right] \qquad \bm e_{2}^{(1)}=\left[
    \begin{array}{c} 
          \displaystyle -\frac{6}{5}\\[10pt]
           0\\[10pt]
          \displaystyle -\frac{4}{5}
    \end{array} \right] \qquad \bm e_{1}^{(2)}=\left[
    \begin{array}{c} 
         \displaystyle -\frac{6}{5}\\[10pt]
         \displaystyle -\frac{8}{15}\\[10pt]
         \displaystyle -\frac{4}{15}
    \end{array} \right].
\end{displaymath}

By Proposition \ref{prop:extremepoints_omegaij}, it follows that:
\begin{displaymath}
    \bm e_{1}^{(2)}= \left[ \begin{array}{c}
         \displaystyle -\frac{6}{5}\\[10pt]
         \displaystyle -\frac{8}{15}\\[10pt]
         \displaystyle -\frac{4}{15}
    \end{array} \right] = \frac{2}{3}\left[ \begin{array}{c}
         \displaystyle-\frac{6}{5}  \\[10pt]
         \displaystyle-\frac{4}{5}\\[10pt]
         0
    \end{array}\right] + \frac{1}{3}\left[ \begin{array}{c}
         \displaystyle-\frac{6}{5}  \\[10pt]
         0\\[10pt]
         \displaystyle-\frac{4}{5}
    \end{array}\right] = \pi_{1,1}^{(1,2)}\bm e_{1}^{(1)}+\pi_{1,2}^{(1,2)}\bm e_{2}^{(1)}.
\end{displaymath}

Applying Proposition \ref{prop:optimality_cuts_prop} to the optimality cut associated with $\Omega_{1}^{(2)}$ yields:
\begin{equation*}
    \pi_{1,1}^{(1,2)}\bigg(-6-\frac{8}{5}x\bigg)+\pi_{1,2}^{(1,2)}\bigg(-\frac{14}{5}-\frac{16}{5}x\bigg) = -\frac{74}{15}-\frac{32}{15}x \leq \theta_1^{(2)}.
\end{equation*}

Regarding the feasibility cuts, the dual feasible region $\Lambda^s$ has a single extreme ray given by $r=-1$ for each $s$.

For the subset $\Omega_{1}^{(2)}=\{\bm \omega^1,\bm \omega^2,\bm \omega^3\}$, the extreme rays of the corresponding dual feasible region $\Lambda_{1}^{(2)}$ are:
\begin{displaymath}
    \bm r_{1}=\left[
    \begin{array}{c} 
          -1\\
          0\\
          0
    \end{array} \right] \qquad \bm r_{2}=\left[
    \begin{array}{c} 
          0\\
          -1\\
          0
    \end{array} \right] \qquad \bm r_{3}=\left[
    \begin{array}{c} 
          0\\
          0\\
          -1
    \end{array} \right],
\end{displaymath}
which is consistent with Proposition \ref{prop:feasibility_cuts_prop}. The same construction can be carried out for the remaining subsets of the refinement chain.
\color{black}

\section{Supplementary material for the multicommodity network design problem}\label{sec_app_C}

In this appendix, we provide supplementary material related to the two-stage stochastic fixed-charge multicommodity network design problem considered in Section \ref{sec_numericalresults}.

\subsection{Illustrative example}\label{sec_app_C1}

For the sake of illustration, Figure \ref{fig:network_design_example} reports an example of a network with two commodities, denoted by 1 and 2, which must be shipped from node 4 to node 7 and from node 1 to node 10, respectively. Origin and destination nodes associated with commodities 1 and 2 are represented by double-circle and dashed double-circle markers, respectively. The left panel, Figure \ref{fig:subfig_a}, shows the design arcs selected in the first stage. The right panel, Figure \ref{fig:subfig_b}, illustrates the second-stage decisions after the realization of the uncertain demands $d^1_7$ and $d^2_{10}$, equal to 12 and 21, respectively, as well as the realization of the arc capacities $u_{ij}$. At this stage, the flow decisions $y_{ij}^{1}$ and $y_{ij}^{2}$ are determined. Each design arc is associated with a label of the form $(u_{ij},(y_{ij}^{1},y_{ij}^{2}))$, where $u_{ij}$ denotes the realized arc capacity and $(y_{ij}^{1},y_{ij}^{2})$ the corresponding routed flows. Since the realized capacities of the design arcs are not sufficient to satisfy the demand of both commodities, the dummy arcs $(4,7)$ and $(1,10)$, represented by dashed lines, are activated for commodities 1 and 2, respectively. The labels on the dummy arcs indicate the amount of unmet demand routed through the corresponding recourse arcs.

\begin{figure}[H]
\centering

\begin{subfigure}[t]{0.40\linewidth}
\centering

\resizebox{\linewidth}{!}{%
\begin{tikzpicture}[
    >=Latex,
    standardarc/.style={-{Latex[length=1.8mm, width=1.8mm]}, thick},
    selectedarc/.style={-{Latex[length=1.8mm, width=1.8mm]}, very thick},
    externalarc/.style={-{Latex[length=1.8mm, width=1.8mm]}, thick, dashed},
    circnode/.style={
        draw, thick, circle,
        inner sep=0pt,
        minimum size=8mm
    },
    specialnode/.style={
        draw, thick, circle,
        inner sep=0pt,
        minimum size=8mm,
        double,
        double distance=1pt
    },
    dasheddoublenode/.style={
        draw,
        thick,
        circle,
        double,
        double distance=1pt,
        dashed,
        inner sep=0pt,
        minimum size=8mm
    }
]

\node[dasheddoublenode] (n1)  at (0,0) {1};
\node[above=2pt] at (n1.north) {\scriptsize $on^2$};
\node[circnode]   (n2)  at (2,1.6) {2};
\node[circnode]   (n3)  at (-3,-2.8) {3};
\node[specialnode](n4)  at (2.5,-2.8) {4};
\node[right=2pt] at (n4.east) {\scriptsize $on^1$};
\node[circnode]   (n5)  at (0,-2.8) {5};
\node[circnode]   (n6)  at (1,2.8) {6};
\node[specialnode](n7)  at (3.4,2.8) {7};
\node[above=2pt] at (n7.north) {\scriptsize $dn^1$};
\node[circnode]   (n8)  at (-0.6,2.8) {8};
\node[circnode]   (n9)  at (-1.6,0.9) {9};
\node[dasheddoublenode] (n10) at (-3,0) {10};
\node[above=2pt] at (n10.north) {\scriptsize $dn^2$};

\draw[standardarc, very thick] (n1) -- (n2);
\draw[standardarc, very thick] (n2) -- (n6);
\draw[standardarc, very thick] (n6) -- (n7);
\draw[standardarc, very thick] (n2) -- (n4);
\draw[standardarc, very thick] (n4) -- (n5);

\draw[selectedarc, bend right=25, very thick] (n1) to (n5);
\draw[selectedarc, bend right=25, very thick] (n5) to (n1);

\draw[standardarc, very thick] (n5) -- (n3);
\draw[standardarc, very thick] (n3) -- (n10);

\end{tikzpicture}
}

\caption{First-stage decisions.} \label{fig:subfig_a}
\end{subfigure}\hfill
\begin{subfigure}[t]{0.53\linewidth}
\centering

\resizebox{\linewidth}{!}{%
\begin{tikzpicture}[
    >=Latex,
    standardarc/.style={-{Latex[length=1.8mm, width=1.8mm]}, thick},
    selectedarc/.style={-{Latex[length=1.8mm, width=1.8mm]}, very thick},
    externalarc/.style={-{Latex[length=1.8mm, width=1.8mm]}, thick, dashed},
    circnode/.style={
        draw, thick, circle,
        inner sep=0pt,
        minimum size=8mm
    },
    specialnode/.style={
        draw, thick, circle,
        inner sep=0pt,
        minimum size=8mm,
        double,
        double distance=1pt
    },
    dasheddoublenode/.style={
        draw,
        thick,
        circle,
        double,
        double distance=1pt,
        dashed,
        inner sep=0pt,
        minimum size=8mm
    }
]

\node[dasheddoublenode] (n1)  at (0,0) {1};
\node[circnode]   (n2)  at (2,1.6) {2};
\node[circnode]   (n3)  at (-3,-2.8) {3};
\node[specialnode](n4)  at (2.5,-2.8) {4};
\node[circnode]   (n5)  at (0,-2.8) {5};
\node[circnode]   (n6)  at (1,2.8) {6};
\node[specialnode](n7)  at (3.4,2.8) {7};
\node[above=2pt] at (n7.north) {\scriptsize $d_{7}^{1}=12$};
\node[circnode]   (n8)  at (-0.6,2.8) {8};
\node[circnode]   (n9)  at (-1.6,0.9) {9};
\node[dasheddoublenode] (n10) at (-3,0) {10};
\node[above=2pt] at (n10.north) {\scriptsize $d_{10}^{2}=21$};

\draw[standardarc, very thick]
(n1) -- node[midway, above left=-3pt] {\scriptsize $(11,(7,4))$} (n2);

\draw[standardarc, very thick]
(n2) -- node[midway, below left=1pt] {\scriptsize $(7,(7,0))$} (n6);

\draw[standardarc, very thick]
(n6) -- node[midway, above] {\scriptsize $(10,(7,0))$} (n7);

\draw[standardarc, very thick]
(n2) -- node[midway, below right=3pt] {\scriptsize $(5,(0,4))$} (n4);

\draw[standardarc, very thick]
(n4) -- node[midway, below=3pt] {\scriptsize $(12,(7,4))$} (n5);

\draw[selectedarc, bend right=25, very thick]
(n1) to node[midway, right=24pt] {\scriptsize $(9,(7,0))$} (n5);

\draw[selectedarc, bend right=25, very thick]
(n5) to node[midway, left=24pt] {\scriptsize $(5,(0,5))$} (n1);

\draw[externalarc, very thick]
(n1) -- node[midway, below] {\scriptsize $12$} (n10);

\draw[externalarc, bend right=65, very thick]
(n4) to node[midway, left=3pt] {\scriptsize $5$} (n7);

\draw[standardarc, very thick]
(n5) -- node[midway, below] {\scriptsize $(13,(0,9))$} (n3);

\draw[standardarc, very thick]
(n3) -- node[midway, left] {\scriptsize $(9,(0,9))$} (n10);

\end{tikzpicture}
}

\caption{Second-stage decisions.} \label{fig:subfig_b}
\end{subfigure}

\caption{Illustrative example of a network design problem with two commodities: first-stage design decisions (left panel) and second-stage flow decisions after the realization of demands and arc capacities (right panel).}
\label{fig:network_design_example}

\end{figure}

\subsection{Benders reformulation}\label{sec_app_C2}

In this section, we derive the Benders reformulation of model \eqref{eq:network_meanrisk_pb_DEP_approach1} associated with the proposed L-shaped refinement chain cuts method. Introducing a discrete scenario set $\mathcal{S}$, model \eqref{eq:network_meanrisk_pb_DEP_approach1} can be reformulated as the following linear program (see \cite{RocUry2000} for details):
\begin{subequations} \label{eq:network_meanrisk_pb_DEP_approach1_scenarios}
\begin{align}
    \min_{\bm x,\bm y,\bm{\widetilde{y}},\bm z,\tau} & \quad \sum_{(i,j)\in\mathcal{A}^A}f_{ij}x_{ij} + (1-\beta)\sum_{s \in \mathcal{S}}p^s\bigg[\sum_{k\in\mathcal{K}} \sum_{(i,j)\in\mathcal{A}^A}c_{ij}^ky_{ij}^{ks}+\sum_{k\in\mathcal{K}} \sum_{(i,j)\in\mathcal{A}^D}\widetilde{c}_{ij}^k\widetilde{y}_{ij}^{ks} \bigg]+\nonumber\\
    & +\beta\bigg[\tau + \frac{1}{1-\alpha}\sum_{s \in \mathcal{S}}p^sz^s \bigg] \label{eq:network_meanrisk_objfunc_DEP_approach1_scenarios}\\
    \text{s.t.} & \quad \sum_{j \in \mathcal{N}^+(i)}\left(y_{ij}^{ks}+\widetilde{y}_{ij}^{ks}\right)-\sum_{j \in \mathcal{N}^-(i)}\left(y_{ij}^{ks}+\widetilde{y}_{ij}^{ks}\right)=d_{i}^{ks} \qquad \forall i \in \mathcal{N},k \in \mathcal{K},s\in\mathcal{S}\label{eq:network_meanrisk_constr1_DEP_approach1_scenarios}   \\
    & \quad \sum_{k\in\mathcal{K}} y_{ij}^{ks} \leq u_{ij}^sx_{ij} \qquad \forall (i,j)\in\mathcal{A}^A,s \in \mathcal{S} \label{eq:network_meanrisk_constr2_DEP_approach1_scenarios} \\
    & \quad \sum_{k\in\mathcal{K}} \sum_{(i,j)\in\mathcal{A}^A}c_{ij}^ky_{ij}^{ks}+\sum_{k\in\mathcal{K}} \sum_{(i,j)\in\mathcal{A}^D}\widetilde{c}_{ij}^k\widetilde{y}_{ij}^{ks} \leq \tau + z^s \qquad \forall s \in \mathcal{S} \label{eq:network_meanrisk_constr3_DEP_approach1_scenarios}\\
    & \quad x_{ij}\in\{0,1\} \qquad  \forall (i,j)\in\mathcal{A}^A \label{eq:network_meanrisk_constr4_DEP_approach1_scenarios} \\
    & \quad y_{ij}^{ks} \geq 0 \qquad \forall (i,j)\in\mathcal{A}^A,k\in\mathcal{K},s\in\mathcal{S} \label{eq:network_meanrisk_constr5_DEP_approach1_scenarios}\\
    & \quad \widetilde{y}_{ij}^{ks} \geq 0 \qquad \forall (i,j)\in\mathcal{A}^D,k\in\mathcal{K},s\in\mathcal{S} \label{eq:network_meanrisk_constr6_DEP_approach1_scenarios}\\
    & \quad z^s \geq 0 \qquad \forall s \in \mathcal{S} \label{eq:network_meanrisk_constr7_DEP_approach1_scenarios}\\
    & \quad \tau \in \mathbb{R} \label{eq:network_meanrisk_constr8_DEP_approach1_scenarios}.
\end{align}
\end{subequations}

Consider a refinement chain of the scenario set $\mathcal{S}$, and let $j\in{1,\ldots,J}$ denote a given refinement level. The Benders master problem \eqref{eq:master_problem_omega_i_j} at level $j$ is given by:
\begin{subequations} \label{eq:network_meanrisk_pb_DEP_approach1_MASTER_fixlevel}
    \begin{align}
    \min_{\bm x,\tau,\theta_i^{(j)}} & \quad \sum_{(i,j)\in\mathcal{A}^A}f_{ij}x_{ij}+\beta\tau + \sum_{i=1}^{m_j}\pi_i^{(j)} \theta_i^{(j)} \\
    \text{s.t.} & \quad \mathcal{Q}(\bm x,\tau,\bm \xi_i^{(j)}) \leq \theta_i^{(j)} \qquad \forall i=1,\ldots,m_j \\
    & \quad \theta_i^{(j)} \in \mathbb{R} \qquad \forall i=1,\ldots,m_j \\
    & \quad x_{ij}\in\{0,1\} \qquad  \forall (i,j)\in\mathcal{A}^A\\
    & \quad \tau \in \mathbb{R}.
    \end{align}
\end{subequations}

For fixed first-stage variables $\bm x \in \{0,1\}^{|\mathcal{A}|^A}$ and $\tau \in \mathbb{R}$, the recourse function $\mathcal{Q}(\bm x,\tau,\bm \xi_i^{(j)})$, defined as in \eqref{eq:scenario_subproblem_omega_i_j}, is obtained by minimizing:
\begin{equation*}
    \sum_{s:\bm \omega^s \in \Omega_i^{(j)}}p^s_{\Omega_{i}^{(j)}} \bigg\{(1-\beta)\bigg[\sum_{k\in\mathcal{K}} \sum_{(i,j)\in\mathcal{A}^A}c_{ij}^ky_{ij}^{ks}+\sum_{k\in\mathcal{K}} \sum_{(i,j)\in\mathcal{A}^D}\widetilde{c}_{ij}^k\widetilde{y}_{ij}^{ks}\bigg]+\frac{\beta}{1-\alpha}z^s\bigg\},
\end{equation*}
with respect to the second-stage decision variables $\bm y^s$, $\widetilde{\bm y}^s$, and $z^s$, for all scenarios $s\in\mathcal{S}$ such that $\bm\omega^s \in \Omega_i^{(j)}$. 


Therefore, the dual recourse subproblem $\mathcal{Q}_{dual}(\bm x,\tau,\bm \xi_i^{(j)})$ associated with subgroup $\Omega_i^{(j)}$ is given by:
\small
\begin{subequations} \label{eq:second_stage_subproblem_approach1_dual_fixlevel}
\begin{align}
    \max_{\bm \lambda^s,\bm \mu^s,\gamma^s:\bm \omega^s \in \Omega_{i}^{(j)}} & \quad \sum_{s: \bm \omega^s \in \Omega_i^{(j)}} \bigg[ \sum_{k\in\mathcal{K}} v^{ks}\left(\lambda_{on^k}^{ks}-\lambda_{dn^k}^{ks}\right)-\sum_{(i,j)\in\mathcal{A}^A}u_{ij}^sx_{ij}\mu_{ij}^s - \tau \gamma^s \bigg] \\
    \text{s.t.} & \quad \lambda_i^{ks}-\!\lambda_{j}^{ks}-\!\mu_{ij}^s-\!c_{ij}^k\gamma^s \!\leq p^s_{\Omega_{i}^{(j)}}(1-\beta)c_{ij}^k \hspace*{0.2cm} \forall (i,j) \in \mathcal{A}^A,k \in \mathcal{K},s:\bm \omega^s\in\Omega_i^{(j)}   \\
    & \quad \lambda_i^{ks}-\lambda_{j}^{ks}-\widetilde{c}_{ij}^k \gamma^s\leq p^s_{\Omega_{i}^{(j)}}(1-\beta)\widetilde{c}^k_{ij} \qquad \forall (i,j) \in \mathcal{A}^D,k \in \mathcal{K}, \forall s:\bm \omega^s\in\Omega_i^{(j)}   \\
    & \quad (1-\alpha)\gamma^s \leq p^s_{\Omega_{i}^{(j)}}\beta \qquad \forall s:\bm \omega^s\in\Omega_i^{(j)} \\
    & \quad \lambda_{i}^{ks} \in \mathbb{R} \qquad \forall i\in\mathcal{N},k\in\mathcal{K},s:\bm \omega^s\in\Omega_i^{(j)}\\
    & \quad \mu_{ij}^s \geq 0 \qquad \forall (i,j)\in\mathcal{A}^A,s:\bm \omega^s\in\Omega_i^{(j)}\\
    & \quad \gamma^s \geq 0 \qquad \forall s:\bm \omega^s\in\Omega_i^{(j)}.
\end{align}
\end{subequations}
\normalsize

The optimal solutions of \eqref{eq:second_stage_subproblem_approach1_dual_fixlevel} are then used to generate the optimality cuts \eqref{eq:optimality_cuts_omega_i_j}.

\subsection{Supplementary numerical results}\label{sec_app_C3}

\begin{sidewaystable}
\caption{Detailed results for the R04 and R05 instances, obtained by solving model \eqref{eq:master_problem_omega_i_j} at different levels of the refinement chain. The time reduction is computed with respect to the single-cut formulation. For each instance, the best reduction is reported in bold, while results that also outperform the multi-cut formulation are underlined.}\label{tab:R04R05_beta05}
\begin{tabular*}{\textheight}{@{\extracolsep\fill} l l ll ll ll ll ll}
\toprule%
& & \multicolumn{6}{@{}l@{}}{Disjoint partitions}& \multicolumn{4}{@{}l@{}}{Fixed-scenario}
\\
\cmidrule{3-8} \cmidrule{9-12}%
& & & & \multicolumn{2}{@{}l@{}}{Sequential grouping} & \multicolumn{2}{@{}l@{}}{Optimal grouping} & & & \multicolumn{2}{@{}l@{}}{Optimal grouping}
\\
\cmidrule{5-6} \cmidrule{7-8} \cmidrule{11-12}%
\multirow{2}{*}{Instance} & \multirow{2}{*}{$|\mathcal{S}|$} & Group & No. of & Time & Time  & Time & Time & Group & No. of & Time & Time
\\
& & size & groups & (s) & red. (\%) & (s) & red. (\%) & size & groups & (s) & red. (\%)
\\
\midrule
\multirow{9}{*}{R04} & \multirow{9}{*}{256} & 1 (multi-cut)   & 256 & 876.38   & 97.41\% & 876.38   & 97.41\% & $-$ & $-$ & $-$      & $-$ 
\\
& & 2   & 128 & 1005.75  & 97.03\% & \textbf{\underline{505.35}}   & $\mathbf{\underline{98.51\%}}$ & 2   & 255 & 1135.79  & 96.65\% 
\\
& & 4   & 64  & 904.86   & 97.33\% & 1759.61  & 94.81\% & 4   & 85  & 1053.57  & 96.89\% 
\\
& & 8   & 32  & 1554.00  & 95.41\% & 1414.93  & 95.82\% & $-$ & $-$ & $-$      & $-$  
\\
& & 16  & 16  & 3480.50  & 89.73\% & 3700.02  & 89.08\% & 16  & 17  & 1690.47  & 95.01\% 
\\
& & 32  & 8   & 3832.67  & 88.69\% & 4209.92  & 87.57\% & $-$ & $-$ & $-$      & $-$     
\\
& & 64  & 4   & 15427.53 & 54.46\% & 31593.89 & 6.75\%  & $-$ & $-$ & $-$      & $-$     
\\
& & 128 & 2   & 19290.63 & 43.06\% & 15035.36 & 55.62\% & $-$ & $-$ & $-$      & $-$     
\\
& & 256 (single-cut) & 1   & 33880.20 & 0.00\%  & 33880.20 & 0.00\%  & 256 (single-cut) & 1   & 33880.20 & 0.00\%  
\\
\midrule
\multirow{9}{*}{R05} & \multirow{9}{*}{256} & 1 (multi-cut)   & 256 & 3268.19  & 68.02\% & 3268.19  & 68.02\% & $-$ & $-$ & $-$     & $-$
\\
& & 2   & 128 & 4677.10  & 53.36\% & 5597.87  & 45.23\% & 2   & 255 & 5060.38 & 50.49\% 
\\
& & 4   & 64  & 5687.08  & 44.35\% & 3727.93  & 63.52\% & 4   & 85  & 3276.99 & 67.94\% 
\\
& & 8   & 32  & 3864.34  & 62.19\% & 9010.64  & 11.83\% & $-$ & $-$ & $-$     & $-$      
\\
& & 16  & 16  & 5243.62  & 48.69\% & 3453.01  & 66.21\% & 16  & 17  & \textbf{\underline{3210.15}} & $\mathbf{\underline{68.59}}$\% 
\\
& & 32  & 8   & 3807.63  & 62.74\% & \underline{3232.68}  & \underline{68.37\%} & $-$ & $-$ & $-$     & $-$      
\\
& & 64  & 4   & 4141.67  & 59.48\% & 5695.52  & 44.27\% & $-$ & $-$ & $-$     & $-$      
\\
& & 128 & 2   & 7899.18  & 22.71\% & 8286.60  & 18.92\% & $-$ & $-$ & $-$     & $-$      
\\
& & 256 (single-cut) & 1   & 10220.09 & 0.00\%  & 10220.09 & 0.00\%  & 256 (single-cut) & 1   & 10220.09 & 0.00\%  
\\
\botrule
\end{tabular*}
\end{sidewaystable}

\begin{sidewaystable}
\caption{Detailed results for the R07 instances, obtained by solving model \eqref{eq:master_problem_omega_i_j} at different levels of the refinement chain with $|\mathcal{S}|=256$. The relative gap reduction is computed with respect to the single-cut formulation. The best reduction is reported in bold, while results that also outperform the multi-cut formulation are underlined.}\label{tab:R07_beta05}
\begin{tabular*}{\textheight}{@{\extracolsep\fill} l l ll ll ll ll}
\toprule%
& & \multicolumn{4}{@{}l@{}}{Disjoint partitions}& \multicolumn{4}{@{}l@{}}{Fixed-scenario}
\\
\cmidrule{3-6} \cmidrule{7-10}%
& & & & \multicolumn{2}{@{}l@{}}{Optimal grouping} & & & \multicolumn{2}{@{}l@{}}{Optimal grouping}
\\
\cmidrule{5-6}\cmidrule{9-10}%
\multirow{2}{*}{Instance} & \multirow{2}{*}{$|\mathcal{S}|$} & Group & No. of & Relative & Relative & Group & No. of & Relative & Relative
\\
& & size & groups & gap & gap red. (\%) & size & groups & gap & gap red. (\%)
\\
\midrule
\multirow{9}{*}{R07} & \multirow{9}{*}{256} & 1 (multi-cut)   & 256 & $1.81 \times 10^{-2}$ & 28.16\% & $-$ & $-$ & $-$ & $-$  
\\
& & 2   & 128 & $\mathbf{\underline{8.60 \times 10^{-3}}}$ & $\mathbf{\underline{65.78\%}}$ & 2 & 255 & $\underline{1.50 \times 10^{-2}}$ & $\underline{40.25\%}$ 
\\
&& 4   & 64  & $\underline{1.14 \times 10^{-2}}$ & $\underline{54.50\%}$ & 4 & 85  & $\underline{1.42 \times 10^{-2}}$ & $\underline{43.56\%}$
\\
&& 8   & 32  & $\underline{1.42 \times 10^{-2}}$ & $\underline{43.62\%}$ & $-$ & $-$ & $-$ & $-$   
\\
&& 16  & 16  & $\underline{1.47 \times 10^{-2}}$ & $\underline{41.70\%}$ & 16 & 17 & $\underline{1.66 \times 10^{-2}}$ & $\underline{33.89\%}$ 
\\
&& 32  & 8   & $\underline{1.79 \times 10^{-2}}$ & $\underline{28.87\%}$ & $-$ & $-$ & $-$ & $-$    
\\
&& 64  & 4   & $2.74 \times 10^{-2}$ & -9.13\% & $-$ & $-$ & $-$ & $-$      
\\
&& 128 & 2   & $2.70 \times 10^{-2}$ & -7.21\% & $-$ & $-$ & $-$ & $-$      
\\
&& 256 (single-cut) & 1   & $2.51 \times 10^{-2}$ & 0.00\% & 256 & 1 & $2.51 \times 10^{-2}$ & 0.00\% 
\\
\botrule
\end{tabular*}
\end{sidewaystable}

\begin{table}[htbp]
\centering
\caption{Detailed results for the R07 instances, obtained by solving model \eqref{eq:master_problem_omega_i_j} at different levels of the refinement chain with $|\mathcal{S}|\in\{1024,2048\}$. The relative gap reduction is computed with respect to the single-cut formulation. For each instance, the best reduction is reported in bold, while results that also outperform the multi-cut formulation are underlined.}
\label{tab:R07_10242048}
\begin{tabular}{l l l l l l}
\toprule
& &  & & \multicolumn{2}{@{}l@{}}{Disjoint partitions}
\\
\cmidrule{5-6}
& & & & \multicolumn{2}{@{}l@{}}{Optimal grouping}
\\
\cmidrule{5-6}
\multirow{2}{*}{Instance} & \multirow{2}{*}{$|\mathcal{S}|$} & Group & No. of & Relative & Relative \\
& & size & groups & gap & gap red. (\%)\\
\midrule
\multirow{22}{*}{R07} & \multirow{10}{*}{1024} & 1 (multi-cut) & 1024 & $1.13 \times 10^{-2}$ & 75.57 \\
& & 2 & 512 & $\mathbf{\underline{9.69 \times 10^{-3}}}$ & $\mathbf{\underline{79.10}}$ \\
& & 4 & 256 & $2.05 \times 10^{-2}$ & 55.88 \\
& & 8 & 128 & $1.35 \times 10^{-2}$ & 70.88 \\
& & 16 & 64 & $1.35 \times 10^{-2}$ & 70.78 \\
& & 32 & 32 & $1.41 \times 10^{-2}$ & 69.57 \\
& & 64 & 16 & $1.47 \times 10^{-2}$ & 68.24 \\
& & 128 & 8 & $2.96 \times 10^{-2}$ & 36.19 \\
& & 256 & 4 & $2.61 \times 10^{-2}$ & 43.73 \\
& & 512 & 2 & $4.61 \times 10^{-2}$ & 0.64 \\
& & 1024 (single-cut) & 1 & $4.64 \times 10^{-2}$ & 0.00 \\
\cmidrule{2-6}
 & \multirow{12}{*}{2048}
& 1 (multi-cut) & 2048 & $1.44 \times 10^{-2}$ & 72.57 \\
& & 2 & 1024 & $\underline{1.23 \times 10^{-2}}$ & \underline{76.46} \\
& & 4 & 512 & $1.64 \times 10^{-2}$ & 68.73 \\
& & 8 & 256 & $1.75 \times 10^{-2}$ & 66.70 \\
& & 16 & 128 & $\mathbf{\underline{1.18 \times 10^{-2}}}$ & $\mathbf{\underline{77.53}}$ \\
& & 32 & 64 & $1.53 \times 10^{-2}$ & 70.83 \\
& & 64 & 32 & $1.69 \times 10^{-2}$ & 67.73 \\
& & 128 & 16 & $2.25 \times 10^{-2}$ & 57.03 \\
& & 256 & 8 & $4.20 \times 10^{-2}$ & 19.86 \\
& & 512 & 4 & $4.64 \times 10^{-2}$ & 11.50 \\
& & 1024 & 2 & $3.88 \times 10^{-2}$ & 25.98 \\
& & 2048 (single-cut) & 1 & $5.25 \times 10^{-2}$ & 0.00 \\
\bottomrule
\end{tabular}
\end{table}

\begin{table}
\caption{Detailed results for the R04 and R05 instances, obtained by applying Algorithm \ref{algo:refchaincuts_algo} between consecutive levels of the refinement chain. The time reduction is computed with respect to the single-cut formulation. For each instance, the best reduction is reported in bold, while results that also outperform the multi-cut formulation are underlined.}\label{tab:R04R05_ALGO_beta05}
\begin{tabular*}{\textwidth}{@{\extracolsep\fill} l l l ll ll}
\toprule%
& & & \multicolumn{4}{@{}l@{}}{Disjoint partitions}
\\
\cmidrule{4-7}
& & & \multicolumn{2}{@{}l@{}}{Sequential grouping} & \multicolumn{2}{@{}l@{}}{Optimal grouping}
\\
\cmidrule{4-5}\cmidrule{6-7}%
\multirow{2}{*}{Instance} & \multirow{2}{*}{$|\mathcal{S}|$} & Consecutive & Time & Time & Time & Time
\\
& & group sizes & (s) & red. (\%) & (s) & red. (\%)
\\
\midrule
\multirow{8}{*}{R04} &\multirow{8}{*}{256} &  1--2     & \textbf{\underline{470.83}}   & $\mathbf{\underline{98.61}}$ & \underline{645.71}    & $\underline{98.09}$
\\
&& 2--4     & \underline{843.95}   & $\underline{97.51}$ & 1993.86   & 94.11
\\
&& 4--8     & 2264.57  & 93.32 & 3446.07   & 89.83
\\
&& 8--16    & 2762.38  & 91.85 & 3947.77   & 88.35
\\
&& 16--32   & 12356.87 & 63.53 & 4502.34   & 86.71
\\
&& 32--64   & 5509.98  & 83.74 & 24865.04  & 26.61
\\
&& 64--128  & 15189.61 & 55.17 & 29964.42  & 11.56
\\
&& 128--256 & 14792.84 & 56.34 & 48151.83  & 1.33
\\
\midrule
\multirow{8}{*}{R05} & \multirow{8}{*}{256} & 1--2     & 7991.12 & 21.81 & 7833.45  & 23.35
\\
&& 2--4     & 6193.63 & 39.40 & 26194.87 & -156.31
\\
&& 4--8     & 4242.10 & 58.49 & 10108.77 & 1.09
\\
&& 8--16    & 4676.97 & 54.24 & 5595.39  & 45.25
\\
&& 16--32   & \textbf{3854.77} & $\mathbf{62.28}$ & 6720.51  & 34.24
\\
&& 32--64   & 7484.40 & 26.77 & 11073.77 & -8.35
\\
&& 64--128  & 7654.11 & 25.11 & 10247.13 & -0.26
\\
&& 128--256 & 7419.94 & 27.40 & 8230.61  & 19.47
\\
\botrule
\end{tabular*}
\end{table}

\begin{table}
\caption{Detailed results for the R07 instances, obtained by applying Algorithm \ref{algo:refchaincuts_algo} between consecutive levels of the refinement chain. The relative gap reduction is computed with respect to the single-cut formulation. For each instance, the best reduction is reported in bold, while results that also outperform the multi-cut formulation are underlined.}\label{tab:R07_ALGO_beta05}
\begin{tabular*}{\textwidth}{@{\extracolsep\fill} l l l ll}
\toprule%
& & & \multicolumn{2}{@{}l@{}}{Disjoint partitions}
\\
\cmidrule{4-5}
& & & \multicolumn{2}{@{}l@{}}{Optimal grouping}
\\
\cmidrule{4-5}
\multirow{2}{*}{Instance} & \multirow{2}{*}{$|\mathcal{S}|$} & Consecutive & Relative & Relative
\\
& & group sizes & gap & gap red. (\%)
\\
\midrule
\multirow{27}{*}{R07} & \multirow{8}{*}{256} & 1--2     & $\mathbf{\underline{9.69 \times 10^{-3}}}$ & $\mathbf{\underline{61.44}}$
\\
&& 2--4      & $\underline{1.17 \times 10^{-2}}$ & \underline{$53.51$}
\\
&& 4--8      & $2.13 \times 10^{-2}$ & 15.35
\\
&& 8--16    & $\underline{1.65 \times 10^{-2}}$ & \underline{$34.40$}
\\
&& 16--32    & $2.51 \times 10^{-2}$ & 0.24
\\
&& 32--64   & $3.53 \times 10^{-2}$ & -40.38
\\
&& 64--128   & $6.99 \times 10^{-2}$ & -178.14
\\
&& 128--256 & $2.55 \times 10^{-2}$ & -1.47
\\
\cmidrule{2-5}
& \multirow{9}{*}{1024} &1--2       & $1.29 \times 10^{-2}$ & 72.13 \\
&&2--4       & $1.35 \times 10^{-2}$ & 70.84 \\
&&4--8       & $\mathbf{1.29 \times 10^{-2}}$ & $\mathbf{72.25}$ \\
&&8--16      & $2.27 \times 10^{-2}$ & 51.11 \\
&&16--32     & $3.00 \times 10^{-2}$ & 35.37 \\
&&32--64     & $1.71 \times 10^{-2}$ & 63.19 \\
&&64--128    & $1.62 \times 10^{-2}$ & 65.12 \\
&&128--256   & $2.10 \times 10^{-2}$ & 54.67 \\
&&256--512   & $2.74 \times 10^{-2}$ & 40.83 \\
&&512--1024  & $3.94 \times 10^{-2}$ & 15.04 \\
\cmidrule{2-5}
& \multirow{10}{*}{2048} & 1--2 & $\mathbf{\underline{1.01 \times 10^{-2}}}$ & $\mathbf{\underline{80.75}}$ \\
&&2--4 & $\underline{1.25 \times 10^{-2}}$ & \underline{76.23} \\
&&4--8 & $1.70 \times 10^{-2}$ & 67.65 \\
&&8--16 & $1.88 \times 10^{-2}$ & 64.18 \\
&&16--32 & $1.64 \times 10^{-2}$ & 68.81 \\
&&32--64 & $2.46 \times 10^{-2}$ & 53.20 \\
&&64--128 & $5.00 \times 10^{-2}$ & 4.60 \\
&&128--256 & $4.82 \times 10^{-2}$ & 8.19 \\
&&256--512 & $4.56 \times 10^{-2}$ & 13.14 \\
&&512--1024 & $7.65 \times 10^{-2}$ & -45.89 \\
&&1024--2048 & $1.06 \times 10^{-1}$ & -101.79 \\
\botrule
\end{tabular*}
\end{table}

\end{appendices}

\end{document}